\newcommand{\norm}[1]{\left\Vert #1\right\Vert}
\newcommand{\bb}[1]{\mathbb{#1}}
\newcommand{\conv}[0]{\mathrm{conv}\,}
\newcommand{\X}{{ \ca{X} }}
\newcommand{\ca}[1]{\mathcal{#1}}
\newcommand{\Diag}[0]{\mathrm{Diag}}
\newcommand{\tp}{^\top}
\newcommand{\D}{D}
\newcommand{\Rnd}{\mathbb{R}^{n\times d}}
\newcommand{\Rn}{\mathbb{R}^n}
\newcommand{\Rm}{\mathbb{R}^m}
\newcommand{\Rmd}{\mathbb{R}^{m\times d}}
\newtheorem{theo}{Theorem}[section]
\newtheorem{lem}[theo]{Lemma}
\newtheorem{prop}[theo]{Proposition}
\newtheorem{coro}[theo]{Corollary}
\newtheorem{defin}[theo]{Definition}
\newtheorem{rmk}[theo]{Remark}
\newtheorem{assumpt}[theo]{Assumption}
\numberwithin{equation}{section}
\title{Convergence of Decentralized Stochastic Subgradient-based Methods for Nonsmooth Nonconvex functions}
\author{Siyuan Zhang\thanks{State Key Laboratory of Scientific and Engineering Computing, Academy of Mathematics and Systems Science, Chinese Academy of Sciences, and University of Chinese Academy of Sciences, China (e-mail: zhangsiyuan@amss.ac.cn)}, ~ 
Nachuan Xiao\thanks{School of Data Science, The Chinese University of Hong Kong, Shenzhen, China.  (e-mail: xncxy@cuhk.edu.cn).},
~ and  ~ 
Xin Liu\thanks{State Key Laboratory of Scientific and Engineering Computing, Academy of Mathematics and Systems Science, Chinese Academy of Sciences, and University of Chinese Academy of Sciences, China (e-mail: liuxin@lsec.cc.ac.cn)}}
\begin{document}
\maketitle

\begin{abstract}
In this paper, we focus on the decentralized stochastic subgradient-based methods in minimizing nonsmooth nonconvex functions without Clarke regularity, especially in the decentralized training of nonsmooth neural networks. We propose a general framework that unifies various decentralized subgradient-based methods, such as decentralized stochastic subgradient descent (DSGD), DSGD with gradient-tracking technique (DSGD-T), and DSGD with momentum (DSGD-M). To establish the convergence properties of our proposed framework, we relate the discrete iterates to the trajectories of a continuous-time differential inclusion, which is assumed to have a coercive Lyapunov function with a stable set $\mathcal{A}$. We prove the asymptotic convergence of the iterates to the stable set $\mathcal{A}$ with sufficiently small and diminishing step-sizes. These results provide first convergence guarantees for some well-recognized of decentralized stochastic subgradient-based methods without Clarke regularity of the objective function. Preliminary numerical experiments demonstrate that our proposed framework yields highly efficient decentralized stochastic subgradient-based methods with convergence guarantees in the training of nonsmooth neural networks. 

\end{abstract}

\textbf{Keywords:}
Nonsmooth optimization, decentralized stochastic subgradient-based method, random reshuffling, with-replacement sampling, conservative field, Lyapunov function.

\section{Introduction}
In this paper, we consider the following decentralized optimization problem (DOP) on an undirected, connected graph $\mathtt{G} = (\mathtt{V}, \mathtt{E})$ containing $d$ agents,
\begin{equation}
\tag{DOP}
\label{Prob_DOP}
\begin{aligned}
\min_{{\bm x}_1, {\bm x}_2, \ldots, {\bm x}_d \in \Rn} & \quad   \sum_{i=1}^d f_i({\bm x}_i), \\
\text { s. t. } & \quad  {\bm x}_{i}={\bm x}_{j}, \quad \forall(i, j) \in \mathtt{E} .
\end{aligned}
\end{equation}
Here, for each $i \in [d]:=\{1,2,\ldots,d\}$, ${\bm x}_{i}$ refers to the local variable of agent $i$. Moreover, the set of nodes $\mathtt{V}= [d]:= \{1, \ldots, d\}$ represents the set of agents, while the set of edges $\mathtt{E}\subseteq \mathtt{V}\times \mathtt{V}$ represents the set of communication links. An edge $(i,j)\in \mathtt{E}$ refers to that agents $i$ and $j$ are neighbors and can communicate with each other. Furthermore, for any $i \in [d]$, $f_i: \mathbb{R}^{n} \to \mathbb{R}$ is a locally Lipschitz continuous (possibly nonconvex and nonsmooth) cost function exclusively known to the agent $i$ and takes the empirical-risk formulation \cite{shapiro2021lectures}. More precisely, for any $i \in [d]$, there exists $F_i: \Rn \times \Omega \to \bb{R}$ such that 
\begin{equation}\label{eq:emrisk}
f_i({\bm x}):= \frac{1}{|\mathcal{S}_{i}|}\sum_{s_{l}\in \mathcal{S}_{i}} F_{i}({\bm x}; {\bm s}_{l}),
\end{equation}
where ${\bm s}_{l}$ is a random data vector supported on a local finite set $\mathcal{S}_{i}$. In real-world applications, the size of $\mathcal{S}_{i}$ is often sufficiently large, in the sense that computing the exact (sub)gradient of $f_i$ is very costly, which makes stochastic (sub)gradient-based methods inevitable. Problem \eqref{Prob_DOP} has wide applications in machine learning \cite{jain2017non, bottou2018optimization}, signal processing \cite{mateos2012distributed, cohen2016distributed} and control \cite{bolognani2014distributed}, etc.

\subsection{Existing works on decentralized stochastic optimization}

Motivated by stochastic gradient descent (SGD), the decentralized stochastic gradient descent (DSGD) \cite{nedic2009distributed, jiang2017collaborative, lian2017can} is one of the most fundamental methods for solving \eqref{Prob_DOP}. In DSGD, each agent locally averages its model parameters with those of its neighbors and updates along the direction of the negative local stochastic gradient, which can be expressed as
\begin{equation}
    \label{Eq_intro_DSGD0}
    {\bm x}_{i, k+1}= \sum_{j = 1}^d {\bm W}(i,j){\bm x}_{j, k} -  \frac{\eta_k}{|\mathcal{B}_{i,k}|} \sum_{l \in \mathcal{B}_{i,k} } \nabla F_{i}({\bm x}_{i,k}; {\bm s}_{l}) ,
\end{equation}
where the subset $\mathcal{B}_{i,k}\subseteq \mathcal{S}_{i}$ refers to the mini-batch selection of the samples. Therefore,  for any $i\in [d]$, the term $\frac{1}{|\mathcal{B}_{i, k}|} \sum_{l \in \mathcal{B}_{i, k}} \nabla F_{i}({\bm x}_{i,k}; {\bm s}_{l})$ refers to the stochastic gradient of $f_i$ at ${\bm x}_{i,k}$, In addition, we define $\xi_{i,k+1} :=  \frac{1}{|\mathcal{B}_{i,k}|}\sum_{l \in \mathcal{B}_{i,k} } \nabla F_{i}({\bm x}_{i,k}; {\bm s}_{l})- \nabla f_{i}({\bm x}_{i,k})$ as the corresponding evaluation noise. ${\bm W} \in \bb{R}^{d\times d}$ represents the mixing matrix corresponding to the graph $\mathtt{G} = (\mathtt{V}, \mathtt{E})$. That is, ${\bm W}$ is symmetric, doubly stochastic, and ${\bm W}(i,j) = 0$ whenever $(i,j) \notin \mathtt{E}$ (see Section \ref{mix} for details).

Based on DSGD, a wide range of variants have been developed for solving \eqref{Prob_DOP}. Some works introduce auxiliary variables within DSGD to track the global stochastic gradient of the objective function, such as
GNSD \cite{lu2019gnsd} and DSGT \cite{pu2021distributed}. To accelerate convergence, some recent works \cite{yu2019linear,gao2020periodic} propose the decentralized momentum SGD by integrating the heavy-ball momentum \cite{polyak1964some} into DSGD. Moreover, \cite{gao2023distributed} combines the stochastic gradient tracking method with momentum acceleration, {{\cite{chen2023convergence} proposes a general framework for decentralized adaptive methods such as Adam, AdaGrad, and AMSGrad, and show that under suitable conditions, for nonconvex smooth objectives, if the original adaptive method converges, then its decentralized counterpart also converges.}} In addition, \cite{huang2023distributed, huang2023distributed2} introduce random reshuffling (RR) \cite{de2020random} sampling techniques (i.e., randomly samples an index $i_k$ of $[N]$ without replacement at each step $k$ in agent $i$, ensuring that each index appears exactly once per epoch.) into DSGD and DSGT, and provide theoretical evidence that RR outperforms with-replacement sampling. 
A series of studies \cite{lian2017can, george2019distributed, yuan2021decentlam, zhang2019decentralized, xin2021improved} have established theoretical guarantees for above decentralized SGD-type methods with continuously differentiable nonconvex objective functions. {{Recently, a line of works \cite{Yan2023CompressedDP, Bylinkin2024AcceleratedMW, condat2024locodl, xu2023parallel, sun2024role, Liu2025CompressedDM} have focused on developing distributed SGD-type methods that reduce communication and handle asynchrony and heterogeneity. For instance, \cite{condat2024locodl} couples local training  with communication compression to obtain a communication-efficient distributed optimization method with theoretical guarantees with strongly convex objectives. \cite{xu2023parallel} analyzes asynchronous adaptive stochastic gradient methods, showing that AMSGrad-type schemes can be parallelized under bounded staleness while retaining convergence guarantees with smooth objective functions. From the perspective of federated learning, \cite{sun2024role} studies the role of server momentum and develop a more general server-momentum framework, which highlights how momentum can mitigate the adverse effects of heterogeneous data and system asynchrony with smooth objective functions.}}

Much attention has also been directed towards decentralized nonsmooth optimization. Some existing works, such as ProxDGD \cite{zeng2018nonconvex}, NEXT \cite{di2016next}, and PG-EXTRA \cite{shi2015proximal} develop decentralized proximal gradient type methods for \eqref{Prob_DOP}, under the assumption that $f_i$ is composed of a differentiable term $g_{i}$ and a convex regularization term $r_{i}$ for all $i \in [d]$. In this context, the objective function is weakly convex and thus is Clarke regular. Several existing works \cite{di2019distributed, niu2021distributed, xin2021stochastic, xiao2023one, zhou2025decentralized} extend the convergence analysis of Prox-DGD and NEXT  from the deterministic case to the stochastic case, under additional assumptions regarding boundedness and heterogeneity. Additionally, \cite{chen2021distributed} proposes stoDPSM for $\rho$-weakly convex objectives and shows the global convergence while employing the Moreau envelope for measuring the stationarity. \cite{koloskova2021improved} improves the convergence analysis for DSGT and proves its convergence for weakly convex objectives. From the view of distributed gradient flow, \cite{swenson2022distributed}  establishes the global convergence of DSGD for Clarke regular function with decreasing consensus parameter. 
The detailed comparisons between the existing
results on decentralized SGD-based methods in nonsmooth nonconvex optimization are listed in Table \ref{tab:2}.

Despite achieving nice theoretical results, all the aforementioned works are only capable of handling nonsmooth but (Clarke) regular \cite{clarke1990optimization} objective functions in \eqref{Prob_DOP}, while some of these works evaluate the efficiency of their developed methods in the decentralized training of neural networks. However, nonsmooth activation functions, including ReLU and leaky ReLU, have served as popular building blocks for modeling neural networks, resulting in loss functions that are nonsmooth and lack Clarke regularity. To the best of our knowledge, \cite{sahinoglu2024online} is the only recent work that investigates the convergence of decentralized SGD-based method without requiring Clarke regularity.  They propose a decentralized first-order and zero-order online algorithm called Me-DOL, but it requires several strong assumptions, such as global Lipschitz function and $L^2$ bounded stochastic subgradients. As a result, how to establish convergence properties for decentralized stochastic subgradient-based methods in nonsmooth, nonconvex settings remains an area for further exploration.

\begin{table}
    \fontsize{8}{12}\selectfont  
    \begin{tabular}{|c|c|c|c|c|}
    \hline
    {Method} &  {\makecell[c]{{Update}\\{scheme}}} & {Step-sizes} & {Conditions on $f_i$} & {Conditions on noises} \\
    \hline
\makecell[c]{{stoDPSM}\\{\cite{chen2021distributed}}}   &  GD & \makecell[c]{{Square summable,}\\ {  $\eta_{k+1}/\eta_{k} \to 1$}} &  \makecell[c]{{$\rho$-weakly convex,}\\  {bounded subgradient}}  & \makecell[c]{{WRS,} \\ {bounded second moment}} \\     
    \hline  
\makecell[c]{{S-NEXT}\\{\cite{di2019distributed}}}   &  GT & {Square summable} &  \makecell[c]{{Proximal, weakly convex,}\\  {bounded subgradient}}  & WRS, bounded noise \\  
    \hline  
\makecell[c]{{ProxGT}\\{\cite{xin2021stochastic}}}  &  GT & Constant &  \makecell[c]{{Proximal, weakly convex,}\\  {mean-squared smoothness}}  &  WRS, bounded variance   \\  
    \hline  
{Prox-DASA} \cite{xiao2023one}   &  {GD/GT} & {$\eta_{k}=\Theta(\frac{1}{\sqrt{K}}), k\in[K]$} &  {Proximal, weakly convex}  &  {{WRS, bounded variance}}   \\ 
    \hline
{\makecell[c]{{DEPOSITUM}\\{\cite{zhou2025decentralized}}}}   &  {GT-M} & {$\eta_{k}=\Theta(\frac{1}{\sqrt{K}}), k\in[K]$} & {Proximal, weakly convex} &  {WRS, bounded variance}  \\  
    \hline  
\makecell[c]{{DGF}\\ {\cite{swenson2022distributed}}}   &  GD & $\eta_{k}=\Theta(k^{-p}), p\in (0,1]$ &  \makecell[c]{{Clarke regular, weakly coercive,}\\  {decreasing consensus parameter}}  & {WRS, bounded variance}   \\  
    \hline
\makecell[c]{{ME-DOL}\\{\cite{sahinoglu2024online}}}   &  GD & $\eta_{k}=\Theta(\frac{1}{\sqrt{K}}), k\in[K]$ &  Global Lipschitz continuous &  \makecell[c]{{WRS, bounded variance,}\\{bounded second moment} }  \\  
    \hline
 Our work \eqref{Eq_Framework}   &  \makecell[c]{{GD/GT}\\ {/GD-M}} &  $\eta_k=o(1/\log(k))$  & \makecell[c]{{Path-differentiable,} \\{Coercive}}  &   \makecell[c]{{WRS, conditionally}\\ {controlled noises}}  \\  
    \hline
Our work \eqref{Eq_Framework}   &  \makecell[c]{{GD/GT}\\ {/GD-M}} & diminishing $\eta_{k}$ & \makecell[c]{{Path-differentiable,} \\{Coercive}}  &   RR   \\  
    \hline
\end{tabular}
\caption{A brief comparison of existing decentralized SGD-based methods for nonsmooth nonconvex optimization. Here, ``GD", ``GT" and ``GD-M", "GT-M" are abbreviations of  ``gradient descent", ``gradient tracking", ``gradient descent with momentum technique" and ``gradient tracking with momentum technique"; ``WRS" and ``RR" 
are abbreviations of  ``with-replacement sampling" and ``random reshuffling"; The term ``conditionally controlled noise" means that the evaluation noise is bounded conditional on $\mathcal{F}_k=\sigma(\{{\bm x}_{i, l}, l\leq k, i \in [d]\})$. The concept "path-differentiable" is defined in Definition \ref{def:path}.}
\label{tab:2}
\end{table}

\subsection{Existing works on stochastic subgradient-based methods for nonsmooth nonconvex optimization}

In the training of nonsmooth neural networks, one major challenge comes from computing the (stochastic) subgradients of the loss function. In a wide range of deep learning packages, such as TensorFlow, PyTorch, and JAX, automatic differentiation (AD) algorithms have become the default choice for computing subdifferential. Utilizing the chain rule, AD algorithms recursively construct the so-called ``subgradients" through the composition of Jacobians of each block of the neural network, yet they neglect the underlying nonsmoothness. As the chain rule fails for loss functions lacking Clarke regularity, the so-called ``subgradient" may not necessarily be contained in its Clarke subdifferential (see examples in \cite{bolte2020mathematical}). 
This limitation consequently renders the theoretical analyses of existing literature inapplicable to such functions.

To characterize the behavior of AD algorithms, \cite{bolte2021conservative} introduces the concept of path-differentiable functions, which constitute a broad function class, including semi-algebraic functions, semi-analytic functions, and functions whose graphs are definable in some o-minimal structures \cite{davis2020stochastic}. More importantly, the set of path-differentiable functions is general enough to cover almost all loss functions in neural network tasks, encompassing most of the loss functions without Clarke regularity. The study \cite{bolte2021conservative} further demonstrates that when applied to path-differentiable loss functions, the outputs of AD algorithms are contained within a conservative field, which is a generalization of the Clarke subdifferential. Conservative field admits chain rules for path-differentiable functions, thereby preserving several good properties. For a comprehensive discussion of this concept, readers are directed to Section \ref{sec:conser}.

Based on the concepts of path-differentiable functions and conservative field, some recent works \cite{bolte2021conservative, 
 castera2021inertial, pauwels2021incremental, xiao2023adam, ruszczynski2020convergence,le2024nonsmooth,xiao2023convergence, bolte2022long} leverage the ordinary differential equation (ODE) approach \cite{benaim2005stochastic, benaim2006dynamics, borkar2009stochastic,duchi2018stochastic} to study the behavior of stochastic subgradient-based methods in the non-distributed setting.  Although these studies have achieved significant progress in establishing convergence properties for stochastic subgradient-based methods, the theoretical results of many approaches cannot avoid infinitely many spurious critical points due to the gap between the conservative field and the Clarke subdifferential \cite{bianchi2022convergence, bolte2021nonsmooth}. Moreover, extending existing results to the multi-agent scenario is non-trivial, as the update schemes of all the agents are coupled through local communication under decentralized settings. Given the convergence properties that need improvement and limited existing results in the multi-agent scenario, developing a unified framework for establishing convergence properties for decentralized stochastic subgradient-based methods is of great importance and worth exploring.

\subsection{A general framework for decentralized stochastic subgradient-based methods}\label{sec:dsm}

We define the collection of the local variables, evaluation noises and  noiseless gradients in \eqref{Eq_intro_DSGD0} by matrix ${\bm X}_k $,  $\Xi_{k+1}$, and ${\bm H}_k$, respectively: 
\begin{equation}
\begin{aligned}
    {\bm X}_k := & \left[{\bm x}_{1,k}, {\bm x}_{2,k}, \ldots, {\bm x}_{d,k} \right],\\
    \Xi_{k+1} := & \left[\xi_{1,k+1}, \xi_{2,k+1}, \ldots, \xi_{d,k+1} \right],\\
    {\bm H}_k := & [\nabla f_{1}({\bm x}_{1,k}), \ldots, \nabla f_d({\bm x}_{d,k})].
\end{aligned}
\end{equation}
Then for any $i\in [d]$, we denote $\Phi^{\mathrm{GD}}_{i}: {\bm x} \mapsto \{\nabla f_{i}({\bm x})\}$ as a singleton-valued mapping from $\mathbb{R}^{n}$ to $2^{\mathbb{R}^n}$, which characterize the noiseless update directions in the $i$-th agent in the DSGD method. With these notations, the update scheme of DSGD in \eqref{Eq_intro_DSGD0} can be reformulated as follows:
\begin{equation}
    \label{Eq_intro_DSGD1}
    \begin{aligned}
        &{\bm X}_{k+1} = {\bm X}_k {\bm W} - \eta_k ({\bm H}_k + \Xi_{k+1}) \\
        \in{}& {\bm X}_k {\bm W} - \eta_k (\left[\Phi^{\mathrm{GD}}_{1}({\bm x}_{1,k}),  \ldots, \Phi^{\mathrm{GD}}_{d}({\bm x}_{d,k}) \right]  + \Xi_{k+1}). 
    \end{aligned}
\end{equation}

In light of the reformulated update scheme \eqref{Eq_intro_DSGD1}, we consider the following unified framework for decentralized stochastic subgradient-based methods, 
    \begin{equation}
        \label{Eq_Framework}
        \tag{DSM}
        {\bm Z}_{k+1} ={\bm Z}_k {\bm W}- \eta_k ({\bm H}_k + \Xi_{k+1}). 
    \end{equation}
    Here, ${\bm Z}_k = [{\bm z}_{1,k}, \ldots, {\bm z}_{d,k}] \in \Rmd$ denotes the collection of local variables (e.g. model parameters, momentum, etc.),  ${\bm H}_k  \in \Rmd$  refers to the collection of noiseless update directions, $\Xi_{k+1}\in \mathbb{R}^{m\times d}$ refers to the collection of evaluation noises, and $\eta_k >0$ denotes the step-size. 
    
Moreover, the noiseless update directions in \eqref{Eq_Framework} is characterized by a family of locally bounded graph-closed set-valued mappings $\{\Phi_i: i\in [d]\}$ that maps $\Rm$ to the collections of subsets of $\Rm$. More precisely, there exists a sequence of nonnegative real numbers $\{\delta_k\}$ such that 
\begin{equation}\label{relation_H_PHI}
    \frac{1}{d}{\bm H}_{k}{\bm 1_{d}}  \in \conv\left( \frac{1}{d} \sum_{i = 1}^d \Phi_i^{\delta_k}({\bm z}_{i, k}) \right), \quad \forall k \in \mathbb{N}.
\end{equation}
Here, $\Phi_{i}^{\delta}({\bm x}):= \{ {\bm y} \in \mathbb{R}^n: \exists {\bm z}, \text{ s. t. } \norm{{\bm z}-{\bm x}}< \delta, \inf_{{\bm w}\in \Phi_i({\bm z})}\norm{{\bm y}-{\bm w}} < \delta\}$. \footnote{The superscript is used to denote expansion only when applied to set-valued mappings.} Therefore, it is easy to verify that \eqref{Eq_Framework} encloses DSGD method in \eqref{Eq_intro_DSGD0}.

\subsection{Contributions}

The main contributions of our paper can be summarized as follows.

We propose a unified framework \eqref{Eq_Framework} that encompasses a variety of decentralized stochastic subgradient-based methods.  We verify that decentralized stochastic subgradient descent (DSGD), DSGD with gradient-tracking technique (DSGD-T) and  DSGD with momentum (DSGD-M) all fit into the framework \eqref{Eq_Framework}. In particular, a decentralized variant of SignSGD \cite{bernstein2018signsgd}, denoted as DSignSGD, is also enclosed by \eqref{Eq_Framework}.

We establish the convergence guarantees for our framework \eqref{Eq_Framework}. By introducing the following differential inclusion 
\begin{equation}
         \label{Eq_intro_DI}
         \frac{\mathrm{d}{\bm z}}{\mathrm{d}t} \in  -\conv\left( \frac{1}{d}\sum_{i = 1}^d \Phi_i({\bm z}) \right),
\end{equation}
that admits a coercive Lyapunov function $\psi$ and stable set $\mathcal{A}$, we prove that, under mild conditions with controlled noises, the sequence $\{{\bm Z}_k\}$ generated by \eqref{Eq_Framework} is uniformly bounded. Then we prove that the iterates $\{{\bm z}_{i,k}\}, i\in [d]$ asymptotically reach consensus and approximate the trajectories of the differential inclusion \eqref{Eq_intro_DI}, hence converging to the stable set $\mathcal{A}$ under sufficiently small and diminishing step-size.

Based on our theoretical results of the proposed framework \eqref{Eq_Framework}, we prove that DSGD, DSGD-M, DSGD-T, and DSignSGD achieve global convergence in the random reshuffling case and high-probability global convergence in the with-replacement sampling case. We also show that with random initialization, DSGD, DSGD-M and DSignSGD can avoid the spurious critical points in the sense of conservative field in minimizing path-differentiable functions. Although these algorithms are widely used in the training of nonsmooth neural networks, existing theoretical results are limited to Clarke regular objective functions. Our theoretical findings first fill the gap in solving Problem \eqref{Prob_DOP} with objectives lacking Clarke regularity.

Preliminary numerical experiments show the efficiency of analyzed methods
and exhibit the superiority of our proposed method DSignSGD in comparison with DSGD and DSGD-M, hence demonstrating the flexibility and great potential of our framework.

\subsection{Organizations}

The rest of this paper is organized as follows. In Section \ref{sec:2}, we present some basic notations and definitions. In Section \ref{sec:3}, we stipulate some assumptions for our proposed framework \eqref{Eq_Framework}, and establish convergence results under different noise settings by linking it with the dynamics of its corresponding differential inclusion. In Section \ref{sec:4}, we demonstrate that framework \eqref{Eq_Framework} covers major existing decentralized stochastic subgradient-based methods, which inherit the convergence properties from \eqref{Eq_Framework} and avoid infinitely many spurious critical points. Numerical experiments regarding analyzed decentralized subgradient-based methods are presented in Section \ref{sec:6}. In the last section, we draw conclusions and discuss possible future research directions.

\section{Preliminaries}\label{sec:2}

\subsection{Notations}\label{sec:notation}

We denote $\langle\cdot, \cdot\rangle$ as the standard inner product, while  $\|\cdot\|$ as the $\ell_2$-norm of a vector or spectral norm of a matrix. We refer to 
 $\|\cdot\|_{1}$ as  $\ell_1$-norm of a vector and $\|\cdot\|_{\mathrm{F}}$  as Frobenius norm of a matrix. 
$\mathbb{B}({\bm x}, \delta):=\left\{\tilde{{\bm x}} \in \mathbb{R}^n:\|\tilde{{\bm x}}-{\bm x}\|^2\leq \delta^2\right\}$ refers to the ball centered at ${\bm x}$ with radius $\delta$. For a given set $\mathcal{Y}, \operatorname{dist}({\bm x}, \mathcal{Y})$ denotes the distance between ${\bm x}$ and a set $\mathcal{Y}$, i.e. $\operatorname{dist}({\bm x}, \mathcal{Y}):=\arg \min_{y \in \mathcal{Y}}\|{\bm x}-{\bm y}\|,$ $\conv \mathcal{Y}$ denotes the convex hull of $\mathcal{Y}$, and $\mathcal{Y}^d$ denotes $d$-fold Cartesian product. The notation $\otimes$ stands for the Kronecker product. $\Delta_m:= \{(\lambda_0, \ldots, \lambda_m): \lambda_i \geq 0, \sum_{i=0}^{m}\lambda_i =1\}$ stands for the  simplex of dimension $m$.

For any positive sequence  $\left\{\eta_k\right\}$, let $\lambda_{\eta}(0):=0, \lambda_{\eta}(i):=\sum_{k=0}^{i-1} \eta_k$, and $\Lambda_{\eta}(t):=\sup \left\{k \in\mathbb{N}:\right. $ $\left. t \geq \lambda_{\eta}(k)\right\}.$
More explicitly, $\Lambda_{\eta}(t)=p$, if $\lambda_{\eta}(p) \leq t<\lambda_{\eta}(p+1)$. We define the set-valued mapping sign : $\mathbb{R}^n \rightrightarrows \mathbb{R}^n$ as follows:
$$(\operatorname{sign}({\bm x}))_i= \begin{cases}\{-1\}, & {\bm x}_i<0;\\ {[-1,1],} & {\bm x}_i=0;\\ \{1\}, & {\bm x}_i>0.\end{cases}$$
It is easy to verify that $\operatorname{sign}({\bm x})$ is the Clarke subdifferential of the nonsmooth functions ${\bm x} \mapsto\|{\bm x}\|_1$. 

Moreover, we denote $\mathcal{N}_{i}$ as the set of neighbor agents of agent $i$ together with $\{i\}$, $\mathbb{R}_{+}$ as the set of all nonnegative real numbers. For any $N>0,[N]:=\{1, \ldots, N\}$.  And let ${\bm 1}_{d}\in \mathbb{R}^d$ represent a vector of all $1$'s, ${\bm e}_{i}\in \mathbb{R}^{d}$ represent $[0, \ldots, 1, \ldots, 0]^{\top}$, where $1$ is the $i$-th component. For two integers $i$ and $j$, $i \land j$ represents $\min\{i,j\}$. 

We denote $(\Omega, \mathcal{F}, \mathbb{P})$ as the probability space. We use $\sigma({\bm X})$ to denote the sigma-algebra generated by the random variable ${\bm X}$. We say $\left\{\mathcal{F}_k\right\}_{k \in \mathbb{N}}$ is a filtration if $\left\{\mathcal{F}_k\right\}$ is a collection of $\sigma$-algebras that satisfies $\mathcal{F}_0\subseteq \mathcal{F}_1\subseteq \cdots \subseteq \mathcal{F}_{\infty} \subseteq \mathcal{F}$.  A stochastic series $\{\xi_{k}\}$ is called a martingale with respect to a filtration $\{\mathcal{F}_{k}\}$, if $\{\xi_{k}\}$ is adapted to the filtration $\{\mathcal{F}_{k}\}$ and $\mathbb{E}[\xi_{k+1} | \mathcal{F}_{k} ] = \xi_{k}$, $ \forall k \in \mathbb{N}$. Moreover, A stochastic series  $\{\xi_{k}\}$  is referred to as a martingale difference sequence, if $\{\xi_{k}\}$ is adapted to the filtration $\{\mathcal{F}_{k}\}$ and $\mathbb{E}[\xi_{k+1} | \mathcal{F}_{k} ] = 0$ holds for all $k\in \mathbb{N}$. 

In addition, we define the summation function $f$ of \eqref{Prob_DOP} as
\begin{equation}
    \label{Eq_defin_f}
    f({\bm x}) := \frac{1}{d} \sum_{i = 1}^d f_i({\bm x}).
\end{equation}

\subsection{Mixing matrix}\label{mix}
Mixing matrix ${\bm W}=[{\bm W}(i, j)] \in \mathbb{R}^{d \times d}$ determines the structure of the communication network and plays an important role in averaging the local information of neighbor agents. Generally, we assume the mixing matrix ${\bm W}$  satisfies the following properties, which are standard in the literature \cite{xiao2004fast, sundhar2010distributed, lian2017can}.
\begin{defin}\label{Def_mixing_matrix}
Given a connected graph $\mathtt{G} = (\mathtt{V}, \mathtt{E})$, we say ${\bm W} \in \bb{R}^{d\times d}$ is a mixing matrix of $\mathtt{G}$, if it satisfies the following conditions. 
\begin{enumerate}
\item ${\bm W}$ is symmetric.
\item ${\bm W}$ is doubly stochastic, namely, ${\bm W}$ is nonnegative and ${\bm W} {\bm 1}_d={\bm W}^{\top} {\bm 1}_d={\bm 1}_d$.
\item ${\bm W}(i, j)=0$, if and only if $i \neq j$ and $(i, j) \notin \mathtt{E}$.
\end{enumerate}
\end{defin}
With a given graph $\mathtt{G}$, there are various approaches to choose its corresponding mixing matrix, such as Laplacian-based constant edge weight matrix \cite{xiao2004fast} and Metropolis weight matrix \cite{xiao2006distributed}. We refer interested readers to \cite{nedic2018network, shi2015extra, wang2022decentralized, wang2025decentralized} for more details about how to choose the mixing matrix.

Based on \cite[Perron-Frobenius Theorem]{pillai2005perron}, we derive a direct corollary, which illustrates the fundamental spectral property of mixing matrix ${\bm W}$. 
\begin{coro}
    \label{Prop_preliminary_mixing_matrix}
    For any mixing matrix ${\bm W} \in \bb{R}^{d\times d}$ that corresponds to a connected graph $\mathtt{G}$, all the eigenvalues of ${\bm W}$ lie in $(-1,1]$, and ${\bm W}$ has a single eigenvalue at $1$ that admits ${\bm 1_{d}}$ as its eigenvector. 
\end{coro}

\subsection{Nonsmooth analysis and conservative field}\label{sec:conser}

A set-valued mapping $D: \mathbb{R}^{n} \rightrightarrows \mathbb{R}^n $ is a mapping from $\mathbb{R}^n$ to set of subsets of $\Rn$. The graph of $D$ is given by $\operatorname{graph} D = \{ ({\bm x},{\bm z}) \mid {\bm x}\in \mathbb{R}^n, {\bm z}\in D({\bm x})\}$. 
The set-valued mapping is said to be graph-closed (or has closed graph), if $\operatorname{graph} D$ is a closed subset of $\Rn \times \Rn$. $D$ is said to be locally bounded, if for any ${\bm x}\in \mathbb{R}^n$, there exists a neighborhood $U_{{\bm x}}$ of ${\bm x}$ such that $\sup_{{\bm z}\in D({\bm y}), {\bm y}\in U_{{\bm x}}} \| {\bm z}\|< + \infty$. In addition, $D$ is convex-valued, if $D({\bm x})$ is a convex subset of $\Rn$ for any ${\bm x} \in \Rn$.

\begin{defin}\cite{clarke1990optimization}
For any given locally Lipschitz continuous function $f: \mathbb{R}^{n} \to \mathbb{R}$, and any ${\bm x}\in \mathbb{R}^n$, the generalized directional derivative of  $f$ at ${\bm x}$ along the direction ${\bm d} \in \mathbb{R}^n$, is defined as  
\begin{equation*}
  f^{\circ}({\bm x};{\bm d}):=  \limsup_{{\bm y}\to {\bm x}, t\downarrow 0}\frac{f({\bm y}+t{\bm d})-f({\bm y})}{t}.
\end{equation*}
The Clarke subdifferential of $f$ at ${\bm x}\in \mathbb{R}^n$, denoted by $\partial f({\bm x})$, is defined as 
\begin{equation*}
\partial f({\bm x}):= \left\{ {\bm u} \in \mathbb{R}^n:  f^{\circ}({\bm x};{\bm d}) \geq \langle {\bm u}, {\bm d} \rangle, \forall {\bm d}\in \mathbb{R}^n \right\}.
\end{equation*}
\end{defin}

Notice that $\partial f$ is a convex-valued graph-closed locally bounded set-valued mapping from $\Rn$ to the subsets of $\Rn$. Based upon the concept of generalized directional derivative, we present the definition of Clarke regular functions.

\begin{defin}\cite{clarke1990optimization}
We say that $f$ is Clarke regular at ${\bm x} \in \Rn$, if for every direction
${\bm d}\in \Rn$, the one-sided directional derivative 
\begin{equation*}
    f^{*}({\bm x};{\bm d}):= \lim_{t\downarrow 0}\frac{f({\bm x}+t{\bm d})-f({\bm x})}{t}
\end{equation*}
exists, and $f^{*}({\bm x};{\bm d})= f^{\circ}({\bm x};{\bm d}).$ 
\end{defin}

Next, we present a brief introduction to the conservative field, which characterizes the output of AD algorithms in differentiating nonsmooth neural networks.
\begin{defin}[Conservative field]\label{def:conservative}
    Let $D: \mathbb{R}^{n} \rightrightarrows \mathbb{R}^n$ be a nonempty set-valued mapping. We call $D$ a conservative field if it is compact-valued and graph-closed, and for any absolutely continuous loop $\gamma: [0,1] \rightarrow \mathbb{R}^n$ satisfying $\gamma(0)=\gamma(1)$, it holds that 
        \begin{equation*} 
        \int_0^1\max_{{\bm v} \in D(\gamma(t))}\langle\dot{\gamma}(t), {\bm v}\rangle \mathrm{d} t=0.
        \end{equation*}
\end{defin}

It is remarkable to note that any conservative field is locally bounded \cite[Remark 3]{bolte2021conservative}. We now introduce the definition of the path-differentiable function corresponding
to the conservative field.

\begin{defin}\label{def:path}
Let $D$ be a conservative field, we say a function $f$ is a potential for $D$ or $f$ is path-differentiable, provided that $f$ can be formulated as 
$$  f({\bm x})={f({\bm x}_{0})}+\int_0^t\langle\dot{\gamma}(s), D(\gamma(s))\rangle \mathrm{d} s,
$$
for any absolutely continuous curve $\gamma$ with $\gamma(0)={\bm x}_{0}$ and $\gamma(t)={\bm x}$. We also say that $D$ is a conservative field for $f$, denoted as $D_{f}$. 
\end{defin}
 As illustrated in \cite[Corollary 1]{bolte2021conservative}, for any given path-differentiable function $f$, $\partial f$ is a conservative field for $f$. Therefore, the concept of the conservative field can be regarded as an extension of the Clarke subdifferential. More importantly, when conservative field $D_f({\bm x})$ is convex-valued, ${\bm 0} \in \partial f({\bm x})$ implies that $0 \in D_f({\bm x})$. 

In fact, the class of path-differentiable functions is general enough to cover the most objective functions in real-world problems. The well-known Clarke regular functions mentioned in \cite{clarke1990optimization} and semi-algebraic functions mentioned in  \cite{ojasiewicz1965EnsemblesS} are all path-differentiable functions. In \cite{davis2020stochastic, bolte2021conservative}, the authors identify definable functions (whose graphs are definable in an o-minimal structure) as an important class of path-differentiable functions. Actually, most activation functions and loss functions in deep neural networks are definable functions, including sigmoid, softplus, ReLU, leaky ReLU, hinge loss, etc. 

Furthermore, it is important to highlight that definability is preserved under finite summation and composition \cite{davis2020stochastic}. Consequently, for any neural network constructed by definable blocks, its loss function is definable, thus it is a path-differentiable function. Additionally, it is worth noting that the Clarke subdifferential of definable functions is itself definable \cite{bolte2021conservative}. Hence, in the case of a neural network constructed by definable blocks, the conservative field corresponding to the AD algorithms is also definable. The following proposition demonstrates that the definability of both $f$ and its conservative field $D_f$  leads to the nonsmooth Morse–Sard property \cite{bolte2007clarke}. 

\begin{prop}[Theorem 5 in \cite{bolte2021conservative}]\label{prop:finite}
 Let $f$ be a path-differentiable function that admits $\D_f$ as its conservative field. Suppose both $f$ and $\D_f$ are definable over $\mathbb{R}^n$, then the set $\left\{{f({\bm x})}:{\bm 0}\in \D_f({\bm x}) \right\}$ is finite.
\end{prop}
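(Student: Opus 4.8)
The plan is to combine two uses of definability with one use of conservativity. Write $C := \{{\bm x}\in\Rn : {\bm 0}\in\D_f({\bm x})\}$, so that the set in the statement is precisely $f(C)$. Since $\D_f$ is definable over $\Rn$, the graph of $\D_f$ is a definable subset of $\Rn\times\Rn$; intersecting it with $\Rn\times\{{\bm 0}\}$ and projecting onto the first coordinate exhibits $C$ as a definable set. Because $f$ is definable as well, $f(C)\subseteq\bb{R}$ is a projection of a definable set, hence definable, and therefore a finite union of points and open intervals. It thus suffices to prove that $f(C)$ is finite.

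First I would invoke the $C^1$ cell decomposition theorem for definable sets to partition $C$ into finitely many connected $C^1$ embedded submanifolds of $\Rn$, say $C = \bigsqcup_{j=1}^N M_j$. Then $f(C) = \bigcup_{j=1}^N f(M_j)$, so finiteness of $f(C)$ reduces to the sharper claim that $f$ is constant on each stratum $M_j$.

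To prove that claim, fix $j$ and two points ${\bm x},{\bm y}\in M_j$. A connected $C^1$ manifold is $C^1$-path-connected, so there is a $C^1$ curve $\gamma:[0,1]\to M_j$ with $\gamma(0)={\bm x}$ and $\gamma(1)={\bm y}$; being $C^1$ on a compact interval, $\gamma$ is Lipschitz, hence absolutely continuous, and its range lies entirely in $M_j\subseteq C$, so ${\bm 0}\in\D_f(\gamma(t))$ for all $t$. As $f$ is a potential for the conservative field $\D_f$, along $\gamma$ we have
\begin{equation*}
	f({\bm y}) = f({\bm x}) + \int_0^1 \langle\dot\gamma(t),\, \D_f(\gamma(t))\rangle\,\mathrm{d}t ,
\end{equation*}
and by conservativity, for almost every $t$ the number $\langle\dot\gamma(t),{\bm v}\rangle$ is the same for every ${\bm v}\in\D_f(\gamma(t))$ and equals the value of the integrand. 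Taking ${\bm v}={\bm 0}\in\D_f(\gamma(t))$ shows this common value is $\langle\dot\gamma(t),{\bm 0}\rangle=0$ for a.e. $t$, whence $f({\bm y})=f({\bm x})$. Since ${\bm x},{\bm y}\in M_j$ were arbitrary, $f$ equals a constant $c_j$ on $M_j$, and therefore $f(C)\subseteq\{c_1,\dots,c_N\}$ is finite.

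The one step that is not elementary is the fact used in the displayed identity: along an absolutely continuous curve a conservative field is ``essentially single-valued'', so that ${\bm 0}\in\D_f(\gamma(t))$ forces $\frac{\mathrm{d}}{\mathrm{d}t}f(\gamma(t))=0$ for a.e. $t$ rather than merely permitting it. I expect this to be the crux, but it is exactly what the loop condition in Definition \ref{def:conservative} yields when combined with the definition of a potential (apply the loop condition to $\gamma$ followed by its time-reversal, which also bounds the minimum of $\langle\dot\gamma(t),\cdot\rangle$ over $\D_f(\gamma(t))$), so it requires no separate argument here. The remaining ingredients --- closure of definable sets under projection, finiteness of definable subsets of $\bb{R}$, and the $C^1$ cell decomposition theorem --- are standard facts about o-minimal structures \cite{davis2020stochastic}.
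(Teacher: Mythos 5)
The paper does not actually prove this proposition; it is imported verbatim as Theorem 5 of \cite{Bolte2019ConservativeSV}, so there is no in-paper argument to compare against. Your reconstruction is correct and follows the same strategy as the original source: stratify the definable critical set $C=\{{\bm x}:{\bm 0}\in\D_f({\bm x})\}$ into finitely many connected $C^1$ cells, and show $f$ is constant on each cell, so that $f(C)$ has at most as many elements as there are cells. You also correctly isolate and justify the only non-elementary step, namely that along an absolutely continuous curve the map ${\bm v}\mapsto\langle\dot\gamma(t),{\bm v}\rangle$ is constant on $\D_f(\gamma(t))$ for a.e.\ $t$ (concatenate $\gamma$ with its time-reversal in the loop condition of Definition \ref{def:conservative}, and note the resulting integrand $\max-\min$ is nonnegative with zero integral), after which ${\bm 0}\in\D_f(\gamma(t))$ forces the common value, hence $\frac{\mathrm{d}}{\mathrm{d}t}f(\gamma(t))$, to vanish a.e. The only cosmetic difference from \cite{Bolte2019ConservativeSV} is that they route this step through their projection formula onto tangent spaces of a Whitney stratification, whereas you use the curve-integral characterization of the potential directly; your first paragraph on definability of $f(C)$ is harmless but redundant, since the constancy-on-cells argument already yields finiteness outright.
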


Finally,  based on the concept of the conservative field,  we present the definition of the critical points for the optimization problem \eqref{Prob_DOP}.
\begin{defin}
    \label{Defin_critical_points}
    Let $f$ in \eqref{Eq_defin_f} be a path-differentiable function that admits $\D_f$ as its convex-valued conservative field, then we say ${\bm X}\in \Rnd$ is a $\D_f$-critical point of \eqref{Prob_DOP}, if ${\bm X} = \frac{1}{d}{\bm X}{\bm 1_{d}} {\bm 1}_d\tp$ and ${\bm 0} \in \D_f(\frac{1}{d}{\bm X}{\bm 1_{d}})$. Furthermore,  we say ${\bm X}\in \Rnd$ is a Clarke (or $\partial f$)-critical point of \eqref{Prob_DOP}, if ${\bm X} = \frac{1}{d}{\bm X}{\bm 1_{d}} {\bm 1}_d\tp$ and ${\bm 0} \in \partial f(\frac{1}{d}{\bm X}{\bm 1_{d}})$.
\end{defin}

\subsection{Differential inclusion and stochastic subgradient methods}
In this part, we introduce fundamental concepts and theories associated with differential inclusion, which is crucial for establishing the convergence properties for stochastic subgradient methods. 

\begin{defin}
Let $D: \mathbb{R}^n \rightrightarrows \mathbb{R}^n$ be a set-valued mapping. An absolutely continuous curve $\gamma: \mathbb{R}_{+} \rightarrow \mathbb{R}^n$ is called a solution (or trajectory) to the  differential inclusion 
\begin{equation}\label{eq:diff}
\frac{d {\bm x}}{d t} \in D({\bm x}),
\end{equation}
(or a trajectory of $D$) with initial point ${\bm x}$, provided that $\gamma(0)= {\bm x}$ and $\gamma'(t)\in D(\gamma(t))$
holds for almost every $t\in \mathbb{R}_{+}$. 
\end{defin}

\begin{defin}[Lyapunov function]
Let $\mathcal{B} \subset \mathbb{R}^n$ be a closed set. A continuous function $\psi: \mathbb{R}^n \rightarrow \mathbb{R}$ is referred to as a Lyapunov function for the differential inclusion \eqref{eq:diff} with the stable set $\mathcal{B}$, if for any solution $\gamma$ to \eqref{eq:diff} and any $t>0$,  it holds that 
\begin{equation*}
    \psi(\gamma(t))\leq \psi(\gamma(0)).
\end{equation*}
Moreover, whenever  $\gamma(0) \notin \mathcal{B}$, it holds for any $t> 0$ that 
\begin{equation*}
    \psi(\gamma(t)) < \psi(\gamma(0)).
\end{equation*}
\end{defin}

\begin{defin}
For any set-valued mapping $D: \mathbb{R}^n \rightrightarrows \mathbb{R}^n$ and any $\delta \geq0$, we denote $D^\delta: \mathbb{R}^n \rightrightarrows \mathbb{R}^n$ as
$D^\delta({\bm x})=\cup_{{\bm y} \in \mathbb{B}({\bm x}, \delta)}(D({\bm y})+\mathbb{B}({\bm 0}, \delta))$. 
\end{defin}

\begin{defin}[Perturbed solution]\label{def:pert}
An absolutely continuous curve $\gamma: \mathbb{R}_{+} \rightarrow \mathbb{R}^n $ is said to be a perturbed solution of differential inclusion \eqref{eq:diff}, if the following two conditions hold:
\begin{itemize}
    \item There exists a  locally integrable function $w: \mathbb{R}_{+} \rightarrow \mathbb{R}^n$, such that $$
        \lim_{t \rightarrow + \infty} \sup_{0\leq t_{0} \leq T}\left\|\int_t^{t+t_{0}} w(s) d s\right\|=0
        $$
    holds for any $T>0$.
    \item For above locally integrable $w$, there exists $\delta: \mathbb{R}_{+} \rightarrow \mathbb{R}$ such that $\lim_{t \rightarrow\infty} \delta(t)=0$ and
    $\dot{\gamma}(t)-w(t) \in D^{\delta(t)}(\gamma(t))$. 
 
\end{itemize}
\end{defin}

Now we consider an iterative sequence corresponding to the differential inclusion \eqref{eq:diff},
\begin{equation}\label{eq:sto_app}
{\bm x}_{k+1}= {\bm x}_{k} +\eta_{k}(D^{\delta_k}({\bm x}_{k})+ \xi_{k+1}),
\end{equation}
where $\{\eta_k\}$ is a non-summable positive sequence of step-sizes, $\{\delta_{k}\}$ is a nonnegative sequence, and $\xi_{k+1}$ is a random noise when evaluating $D({\bm x}_k)$. A continuous-time interpolated process $u: \mathbb{R}_{+} \rightarrow \mathbb{R}^n$  induced by \eqref{eq:sto_app} is defined as 
\begin{equation*}
u(\lambda_{\eta}(k) +s) = {\bm x}_{k} +  \frac{{\bm x}_{k+1}-{\bm x}_k}{\eta_{k}} s, \quad s \in [0,\eta_{k}).
\end{equation*}
Here, $\lambda_{\eta}(0):=0$ and $\lambda_{\eta}(k):= \sum_{i=0}^{k-1} \eta_{i}, k \geq 1$.

In what follows, we summarize and present several lemmas, which respectively describe the conditions under which the interpolated process of the sequence $\{{\bm x}_{k}\}$ is a perturbed solution of \eqref{eq:diff}, how the sequence $\{{\bm x}_{k}\}$ remains uniformly bounded, and a result on global convergence.

\begin{lem}[Extension of Proposition 1.3 in \cite{benaim2005stochastic}]\label{lem:extension_perturbed}
Let $D:\mathbb{R}^{n} \rightrightarrows \mathbb{R}^n$ be a locally bounded, graph-closed and convex-valued set-valued mapping. Assume that the following hold:
\begin{enumerate}
\item For any $T>0$, 
\begin{equation*}
\lim_{s \to \infty} \sup_{s\leq i \leq \Lambda_{\eta}(\lambda_{\eta}(s)+T)} \norm{\sum_{k=s}^{i} \eta_{k} \xi_{k+1}} =0,
\end{equation*}
where $\Lambda_{\eta}(t):= \sup\{k: \lambda_{\eta}(k)\leq t\}$.
\item $\{\delta_{k}\}$ is diminishing, i.e. $\lim_{k \to \infty}\delta_{k} =0$.
\item $\sup_{k\in \mathbb{N}}\|{\bm x}_{k}\|< + \infty$.
\end{enumerate}
Then the interpolated process of sequence $\{{\bm x}_{k}\}$ is a perturbed solution for \eqref{eq:sto_app}.    
\end{lem}

Before exhibiting the results on uniform boundedness and convergence of \eqref{eq:sto_app}, we provide an assumption about differential inclusion \eqref{eq:diff}, which is commonly used in the analysis of ODE-based approaches \cite{benaim2006dynamics, borkar2009stochastic,duchi2018stochastic}.

\begin{assumpt}\label{asp:preliminary}
\begin{enumerate}
\item There exists a locally Lipschitz continuous Lyapunov function $\psi: \Rn \to \bb{R}$  for the differential inclusion \eqref{eq:diff} with stable set $\ca{A}$.  
\item The set $\{\psi({\bm x}): {\bm x} \in \ca{A}\}$ is a finite subset of $\bb{R}$. 
\end{enumerate}     
\end{assumpt}

\begin{lem}[Theorem 3.6 in \cite{xiao2023convergence}]\label{lem:xiao_stability}
Let $D: \mathbb{R}^{n} \rightrightarrows \mathbb{R}^n$ be a locally bounded, graph-closed and convex-valued set-valued mapping, and $\mathcal{X}_0$ be any compact subset of $\mathbb{R}^n$. Suppose Assumption \ref{asp:preliminary} holds and Lyapunov function $\psi$ is coercive.

Then for any given $r > \max\{0, 4\sup_{x \in \mathcal{X}_0 \cup \mathcal{A}}\psi(x)\}$, there exist $\alpha >0$, $ T>0 $ such that whenever $\sup_{k\geq 0}\eta_{k}\leq \alpha$, $\sup_{k\geq 0}\delta_{k}\leq \alpha$, and $\sup_{s\leq i \leq \Lambda_{\eta}(\lambda_{\eta}(s)+T)} \|{\sum_{k=s}^{i} \eta_{k} \xi_{k+1}}\|\leq \alpha, \forall s\geq 0$, the sequence $\{x_k\}$ generated by \eqref{eq:sto_app}  with $x_0 \in \mathcal{X}_0$ is restricted in $\ca{L}_{r}:=\{{\bm x}\in \mathbb{R}^n: \psi({\bm x})\leq r\}$.  
\end{lem}

\begin{lem}[Summary of Theorem 3.6, Proposition 3.27 in \cite{benaim2005stochastic}]
\label{The_convergence_beniam}
Let $D: \Rn \rightrightarrows \Rn$ be a locally bounded, graph-closed, and convex-valued set-valued mapping. Suppose Assumption \ref{asp:preliminary} holds, and $\{{\bm x}_k\}$ is generated by \eqref{eq:sto_app} with any ${\bm x}_{0} \in \mathbb{R}^{n}$. If the interpolated process of $\{{\bm x}_k\}$ is a perturbed solution of \eqref{eq:diff}, then any cluster point of $\{{\bm x}_k\}$ generated by \eqref{eq:sto_app} lies in $\ca{A}$, and the sequence $\{\psi({\bm x}_k)\}$ converges to $\psi({\bm x}^*)$, for some ${\bm x}^* \in \mathcal{A}$. 
\end{lem}

Except for Lemma \ref{The_convergence_beniam}, similar results under slightly different conditions can be found in \cite{borkar2009stochastic,davis2020stochastic,duchi2018stochastic}, while some recent works \cite{bianchi2021closed,bolte2022long} focus on analyzing the convergence of \eqref{eq:sto_app} under relaxed conditions. Interested readers could refer to those works for details.

\section{A General Framework for Decentralized Stochastic Subgradient-based Methods}\label{sec:3}
In this section, we aim to establish the convergence properties of proposed framework \eqref{Eq_Framework} under two noise settings. To this end, it is crucial to ensure the uniform boundedness of iterates $\{{\bm Z}_{k}\}$. A large amount of literature \cite{castera2021inertial,ruszczynski2020convergence,davis2020stochastic, le2024nonsmooth} regards this condition as a prior assumption when analyzing the convergence of SGD-type algorithms in a single-agent setting. Nevertheless, more recent works \cite{bianchi2019constant, bianchi2022convergence, josz2024global, josz2023lyapunov, bolte2024inexact, xiao2023convergence} focus on rigorously establishing the uniform boundedness of the iterative sequence.

Section \ref{basic} introduces the fundamental definitions and assumptions used in our analysis, and then rigorously demonstrate the uniform boundedness and  asymptotic convergence for our framework \eqref{Eq_Framework}. In Sections \ref{subsec:1} and \ref{subsec:2}, we further discuss about the convergence properties of \eqref{Eq_Framework} when the evaluation noise is introduced by random reshuffling and with-replacement sampling, respectively.

\subsection{Basic assumptions and main results}\label{basic}

\eqref{relation_H_PHI} shows that the average update direction $\{\frac{1}{d}{\bm H}_{k}{\bm 1_{d}}\}$ approximates $\frac{1}{d}\sum_{i = 1}^d \Phi_i({\bm z}_{i,k})$. To constrain the behavior of $\{\Phi_i\}_{i=1}^d$, we impose a set of basic assumptions concerning both the framework in Section \ref{sec:dsm} and its associated continuous-time differential inclusion,
\begin{equation}\label{eq:diff_inclu}
    \frac{\mathrm{d}{\bm z}}{\mathrm{d}t} \in - \Phi({\bm z}):=  \conv \left( \frac{1}{d}\sum_{i = 1}^d \Phi_i({\bm z}) \right).
\end{equation}

\begin{assumpt}
\label{Assumption_framework}
\begin{enumerate}
\item The sequence of step-sizes $\{\eta_k\}$ is positive and satisfies $\sum_{k=0}^{\infty} \eta_{k} = +\infty$.
\item The sequences $\{{\bm H}_{k}\}$ and $\{\Xi_{k+1}\}$ are uniformly bounded in $\mathbb{R}^{m\times d}$, whenever $\{{\bm Z}_{k}\}$ is bounded in $\mathbb{R}^{m\times d}$.
\item There exists a locally Lipschitz continuous and coercive Lyapunov function $\psi: \Rm \to \bb{R}$ for the differential inclusion \eqref{eq:diff_inclu} with stable set $\ca{A}$.  
\item The set $\{\psi({\bm z}): {\bm z} \in \ca{A}\}$ is a finite subset of $\mathbb{R}$.
\end{enumerate}
\end{assumpt}

Assumption \ref{Assumption_framework}(1) allows for a flexible choice of the step-size, while Assumption \ref{Assumption_framework}(2) is reasonable and mild, which is commonly employed in various existing works \cite{bolte2021conservative,davis2020stochastic,xiao2023convergence,xiao2023adam, castera2021inertial}.

Assumption \ref{Assumption_framework}(3) clarifies the Lyapunov properties of a differential inclusion with respect to  $-\Phi$, which is standard in the literature \cite{benaim2005stochastic, borkar2008stochastic, bian2009subgradient, davis2020stochastic, bolte2021conservative, josz2023lyapunov} and can be satisfied by most appropriately selected set-valued mappings. Since each $\Phi_i$ is graph-closed and locally bounded, the convex-valued set-valued mapping $\Phi$ is also graph-closed and locally bounded. Then, the coercivity of $\psi$ in Assumption \ref{Assumption_framework}(3) indicates the compactness of any level set $\mathcal{L}_{r}:= \{x\in \mathbb{R}^{n}: \psi(x) \leq r\} $. 

Assumption \ref{Assumption_framework}(4) is similar to the Weak Sard’s condition in \cite{davis2020stochastic}, which stipulates that the set of values of $\psi$ at points outside the stable set is dense in $\mathbb{R}$. As demonstrated in \cite{bolte2021conservative},   Assumption \ref{Assumption_framework}(4) holds when $f$ is a definable function with the selection $\Phi:= \partial f$ and $\psi:= f$.

In the following, we present some basic notations and definitions throughout Section \ref{sec:3}. For any $M>0$, the stopping time $\tau_{M}$ is defined as 
\begin{equation*}
\tau_{M}:= \inf\{k\in \mathbb{N}: \|{\bm Z}_{k}\|>M\},
\end{equation*}
and the upper bound of noisy update direction before stopping time is given by 
\begin{equation*}
\ell_{M}:= \sup_{0\leq k\leq \tau_M} \norm{{\bm H}_{k}+\Xi_{k+1}},   
\end{equation*}
which is finite followed by Assumption \ref{Assumption_framework}(2).
The second largest singular value of mixing matrix ${\bm W}$ is 
\begin{equation*}
\lambda_{2}:= \norm{{\bm W}({\bm I}_d - \frac{{\bm 1}_d{\bm 1}_{d}^{\top}}{d})}.
\end{equation*}
\begin{defin}
For any positive sequence $\{\eta_{k}\}$, we say $\{\eta_{k}\}$ is $\alpha_{\text{ub}}$-\textit{upper-bounded} or is \textit{upper-bounded} by $\alpha_{\text{ub}}$, if $\sup_{k\in\mathbb{N}}\eta_{k}\leq \alpha. $ \end{defin} 

\begin{defin}
For any sequence of vectors (or matrices) $\{\xi_{k}\}$, and given constants $T, \alpha_{\text{ub}}>0$, we say $\{\xi_{k}\}$ is $(\alpha_{\text{ub}}, T, \{\eta_{k}\})$-\textit{controlled}, if for any $s\geq0$, we have 
\begin{equation*}
\sup_{s\leq i\leq \Lambda_{\eta}(\lambda_{\eta}(s)+T)}\left\|\sum_{k=s}^{i} \eta_{k}\xi_{k+1}\right\| \leq \alpha_{\text{ub}}.    
\end{equation*}
Moreover, we say $\{\xi_{k}\}$ is $(\alpha_{\text{ub}}, \alpha, T, \{\eta_{k}\})$-\textit{asymptotically controlled}, if it is $(\alpha_{\text{ub}}, T, \{\eta_{k}\})$-\textit{controlled} and 
\begin{equation*}
\limsup_{s \to \infty} \sup_{s\leq i\leq \Lambda_{\eta}(\lambda_{\eta}(s)+T)}\left\|\sum_{k=s}^{i} \eta_{k}\xi_{k+1}\right\| \leq \alpha.    
\end{equation*}
\end{defin}

With above tools in placement, we first present Proposition \ref{prop:avgZk+1Tau} to demonstrate the average iterative sequence of \eqref{Eq_Framework} before stopping time coincides with the update scheme \eqref{eq:sto_app}.

\begin{prop}\label{prop:avgZk+1Tau}
Suppose Assumption \ref{Assumption_framework} holds, and let $\mathcal{Z}_{0}$ be a compact subset of $\mathbb{R}^{m}$. The sequence $\{{\bm Z}_{k}\}$ is generated by \eqref{Eq_Framework}, and ${\bm Z}_{0}={\bm z}_{0}{\bm 1}_d^{\top}, {\bm z}_{0}\in \mathcal{Z}_{0}$. Then the recursion relation of $\{\frac{1}{d}{\bm Z}_{k \land \tau_M}{\bm 1}_d\}$ can be reformulated as 
\begin{equation}\label{eq:prop_relation}
\frac{1}{d}{\bm Z}_{(k+1)\land \tau_M}{\bm 1}_d \in \frac{1}{d}{\bm Z}_{k \land \tau_M}{\bm 1}_d -\eta_{k}[\Phi^{s_k}(\frac{1}{d}{\bm Z}_{k\land \tau_M}{\bm 1}_d)+\frac{1}{d}\Xi_{k+1}{\bm 1}_d]\mathbb{1}_{\tau_{M}>k},
\end{equation}  
where $s_k:=\delta_{k}+ \|{\bm Z}_{\perp, k\land \tau_{M}}\| \leq \delta_{k}+ \frac{\sup_{0\leq i \leq k\land \tau_M}\eta_{i}  \ell_M}{1-\lambda_2}$.
\end{prop}

\begin{proof}
From \eqref{Eq_Framework}, the update scheme for $\{{\bm Z}_{k \land \tau_{M}}\}$ can be expressed as
\begin{equation}\label{eq:Zk+1TauM01}
{\bm Z}_{(k+1)\land \tau_{M}}  = {\bm Z}_{k\land \tau_{M}}+ [{\bm Z}_{k\land \tau_{M}} ({\bm W}-{\bm I}_d) - \eta_{k}({\bm H}_{k}+{\Xi}_{k+1})]{\mathbb{1}_{\tau_{M}>k}},
\end{equation}
where $\mathbb{1}_{\tau_{M}>k}$ is the indicator function for the event $\{\tau_{M} > k\}$. Alternatively, the update can be written as
\begin{equation}\label{eq:Zk+1TauM02}
{\bm Z}_{(k+1)\land \tau_{M}}= {\bm Z}_{k\land (\tau_{M}-1)}{\bm W} - \eta_{k\land (\tau_{M}-1)}({\bm H}_{k\land (\tau_{M}-1)}+{\Xi}_{(k+1)\land \tau_{M}}).
\end{equation}
Define the consensus projection matrix ${\bm P}:= \frac{1}{d}{\bm 1_{d}} {\bm 1}_{d}^{\top}$ and the disagreement matrix ${\bm Z}_{\perp, k}:= {\bm Z}_{k} ({\bm I}_{d}-{\bm P})$. Then, the orthogonal decomposition holds: 
\begin{equation*}
{\bm Z}_{k}= {\bm Z}_{k} {\bm P} + {\bm Z}_{\perp, k}.
\end{equation*}
Intuitively, ${\bm Z}_{\perp,k}$ measures the dissimilarity among all agents' local variables at $k$-th iteration.

According to \eqref{eq:Zk+1TauM02}, simple calculations yield that
\begin{equation}\label{eq:ZktauM}
\begin{aligned}
\|{\bm Z}_{\perp, (k+1)\land \tau_{M}}\|  ={}& \norm{ ({\bm Z}_{k\land (\tau_{M}-1)}{\bm W} - \eta_{k\land (\tau_{M}-1)}({\bm H}_{k\land (\tau_{M}-1)}+{\Xi}_{(k+1)\land \tau_{M}}))  ({\bm I}_{d}-{\bm P})}\\
\leq{}& \norm{{\bm Z}_{k\land (\tau_{M}-1)} {\bm W}({\bm I}_d - {\bm P})} + \norm{\eta_{k\land (\tau_{M}-1)} ({\bm H}_{k\land (\tau_{M}-1)} + \Xi_{(k+1)\land \tau_{M}})({\bm I}_d - {\bm P})}\\
\leq{}&  \norm{{\bm Z}_{k\land (\tau_{M}-1)}({\bm I}_d - {\bm P})}\norm{{\bm W}({\bm I}_d - {\bm P})} + \norm{\eta_{k\land (\tau_{M}-1)} ({\bm H}_{k\land (\tau_{M}-1)} + \Xi_{(k+1)\land \tau_{M}})}\\
\leq{}& \lambda_{2} \norm{{\bm Z}_{\perp, k\land (\tau_{M}-1)}} + \norm{\eta_{k\land (\tau_{M}-1)} ({\bm H}_{k\land (\tau_{M}-1)} + \Xi_{(k+1)\land \tau_{M}})}\\
\leq {}&\sum_{i=0}^{k\land(\tau_{M}-1)} \lambda_2^{k\land(\tau_{M}-1)-i}\|\eta_{i}({\bm H}_{i}+\Xi_{i+1})\| + \lambda_2^{(k+1)\land \tau_{M}} \|{\bm Z}_{\perp, 0}\|\\
\leq  {}&\frac{\sup_{0\leq i \leq k\land (\tau_M-1)}\eta_{i} \ell_M}{1-\lambda_2},
\end{aligned}
\end{equation}
where the second inequality follows from the fact that $({\bm I}_d -{\bm P}){\bm W}({\bm I}_d -{\bm P}) = {\bm W}({\bm I}_d -{\bm P})$ and the last inequality is due to ${\bm Z}_{\perp, 0}= {\bm Z}_{0}-\frac{{\bm Z}_{0}{\bm 1}_{d}{\bm 1}_{d}^{\top}}{d} = 0$. As a result, we obtain that 
\begin{equation}\label{eq:perp_bound}
\|{\bm Z}_{\perp, l\land \tau_{M}}\|\leq \frac{\sup_{0\leq i \leq k\land \tau_M}\eta_{i} \ell_M}{1-\lambda_2}, \text{ for any }   l\leq k+1.
\end{equation}
We right-multiply $\frac{{\bm 1}_d}{d}$ to \eqref{eq:Zk+1TauM01} to get:     
\begin{equation}\label{eq:recursion_tm}
\frac{1}{d}{\bm Z}_{(k+1)\land \tau_M}{\bm 1}_d = \frac{1}{d}{\bm Z}_{k \land \tau_M}{\bm 1}_d -\eta_{k}(\frac{1}{d}{\bm H}_{k}{\bm 1}_{d}+\frac{1}{d}\Xi_{k+1}{\bm 1}_d)\mathbb{1}_{\tau_{M}>k}.
\end{equation}
Equation \eqref{relation_H_PHI} reveals that $\frac{1}{d}{\bm H}_{k}{\bm 1_{d}}  \in \conv\left( \frac{1}{d} \sum_{i = 1}^d \Phi_i^{\delta_k}({\bm z}_{i, k}) \right)$. According to Carathéodory’s Theorem, it admits the representation 
\begin{equation*}
\frac{1}{d}{\bm H}_{k}{\bm 1_{d}}=\sum_{j=0}^{m}c_j (\frac{1}{d}\sum_{i=1}^{d}{\bm y}_{i,k}^{j}),  \quad {\bm y}_{i,k}^{j}\in \Phi_i^{\delta_k}({\bm z}_{i, k}), c_j \in \Delta_{m}.
\end{equation*}

On the event $\{\tau_{M} > k\}$, combining with \eqref{eq:perp_bound} yields that 
\begin{equation*}
\begin{aligned}
&~\mathrm{dist}\left(\frac{1}{d}{\bm H}_{k}{\bm 1_{d}},  \conv\left(\frac{1}{d}\sum_{i = 1}^d \Phi_i(\frac{1}{d}{\bm Z}_{k\land \tau_M}{\bm 1}_d+\mathbb{B}({\bm 0}, \delta_{k}+ \|{\bm Z}_{\perp, k\land \tau_{M}}\|))\right)\right) \\
\leq &~ \sum_{j=0}^{m} c_{j}{} \mathrm{dist}\left(\frac{1}{d}\sum_{i=1}^{d}{\bm y}_{i,k}^{j},  \frac{1}{d}\sum_{i = 1}^d \Phi_i\left(\frac{1}{d}{\bm Z}_{k\land \tau_M}{\bm 1}_d+\mathbb{B}({\bm 0}, \delta_{k}+ \|{\bm Z}_{\perp, k\land \tau_{M}}\|)\right)\right) \\
\leq &~ \sum_{j=0}^{m} c_{j}\frac{1}{d}\sum_{i=1}^{d} \mathrm{dist}\left({\bm y}_{i,k}^{j}, \Phi_i\left(\frac{1}{d}{\bm Z}_{k\land \tau_M}{\bm 1}_d+\mathbb{B}({\bm 0}, \delta_{k}+ \|{\bm Z}_{\perp, k\land \tau_{M}}\|)\right)\right) \\
\leq &~ \sum_{j=0}^{m} c_{j}\frac{1}{d}\sum_{i=1}^{d} \delta_k = \delta_k, \\
\end{aligned}
\end{equation*}
where the first inequality is due to the Jensen's inequality and the second inequality is followed by the triangle inequality. That is to say, 
\begin{equation*}
\frac{1}{d}{\bm H}_{k}{\bm 1_{d}} \in \Phi^{s_k}\left(\frac{1}{d}{\bm Z}_{k\land \tau_M}{\bm 1}_d\right).
\end{equation*}
Plugging this relation into \eqref{eq:recursion_tm} completes the proof.
\end{proof}

{
\begin{rmk}
It is worth mentioning that the value of $\lambda_2$ (i.e., the second-largest eigenvalue of the mixing matrix ${\bm W}$) is a critical factor in quantifying the effect of network topology on the convergence of decentralized methods. As demonstrated in  \cite{boyd2004fastest}, $\lambda_2$ is widely regarded as a measurement for network connectivity. Proposition \ref{prop:avgZk+1Tau} establishes \eqref{eq:perp_bound}, demonstrating that $\lambda_2$ governs the rate at which $\bm Z_{\perp,k}$ converges to zero. Consequently, the value of $\lambda_2$ determines how rapidly the iterates ${\frac{1}{d}{\bm Z}_{(k+1)\land \tau_M}{\bm 1}_d}$ approach the trajectory of the differential inclusion \eqref{eq:diff_inclu}. For further analysis regarding the influence of network topology in smooth settings, we refer readers to some related works \cite{vogels2023beyond, song2022communication}.

\end{rmk}
}

Based on Proposition \ref{prop:avgZk+1Tau}, Theorem \ref{theo:globalstability} establishes that the sequence $\{{\bm Z}_{k}\}$ remains uniformly bounded, whenever step-size $\{\eta_{k}\}$, $\{\delta_{k}\}$ are upper-bounded by a sufficiently small $\alpha_{ub}>0$, and $\{\Xi_{k+1}\}$ is $(\alpha_{ub}, T, \{\eta_{k}\})$-controlled for some $T>0$. 

\begin{theo}\label{theo:globalstability}
Suppose Assumption \ref{Assumption_framework} holds, and let $\mathcal{Z}_{0}$ be a compact subset of $\mathbb{R}^{m}$. Then  for any $r> \max\{0, 4 \sup_{x \in \mathcal{Z}_0 \cup \mathcal{A}}\psi(x)\}$, there exist $\alpha_{ub}>0, T>0$ such that for any $\alpha_{ub}$-upper-bounded sequences $\{\eta_{k}\}$ and $\{\delta_{k}\}$, and any $(\alpha_{ub}, T, \{\eta_{k}\})$-controlled sequence $\{\Xi_{k+1}\}$, the sequence $\{{\bm Z}_{k}\}$  generated by \eqref{Eq_Framework} with ${\bm Z}_{0}={\bm z}_{0}{\bm 1}_d^{\top}, {\bm z}_{0}\in \mathcal{Z}_{0}$ is restricted in $\mathcal{L}_{2r}^{d}$.
\end{theo}

\begin{proof}
Recall from Proposition \ref{prop:avgZk+1Tau}, we have
\begin{equation*}
\frac{1}{d}{\bm Z}_{(k+1)\land \tau_M}{\bm 1}_d \in \frac{1}{d}{\bm Z}_{k \land \tau_M}{\bm 1}_d -\eta_{k}[\Phi^{s_k}(\frac{1}{d}{\bm Z}_{k\land \tau_M}{\bm 1}_d)+\frac{1}{d}\Xi_{k+1}{\bm 1}_d]\mathbb{1}_{\tau_{M}>k}.
\end{equation*}
For any $M> 0$, employing Lemma \ref{lem:xiao_stability}, we conclude that there exists $\alpha$, $T>0$  such that for  any $\alpha$-upper-bounded sequences $\{\eta_{k}\}$ and $\{s_k\}$, and any $(\alpha, T, \{\eta_{k}\})$-controlled sequence $\{\frac{1}{d}\Xi_{k+1}{\bm 1}_d \mathbb{1}_{\tau_M>k}\}$, the sequence $\{\frac{1}{d}{\bm Z}_{k \land \tau_M}{\bm 1}_d\}$ is restricted in $\mathcal{L}_{r}$. 

Let $L_{1}$ be the Lipschitz constant of $\psi$ in $\mathcal{L}_{r}+\mathbb{B}({\bm 0}, \frac{\alpha \ell_{M}}{1-\lambda_2})$, and $\alpha_{ub}:= \min\{\frac{\alpha}{2},  \frac{\alpha (1-\lambda_2)}{2 \ell_M}, \frac{r (1-\lambda_2)}{L_{1}\ell_{M}}\}$. It can be readily verified that 
\begin{equation*}
\psi({\bm z}_{i,k\land \tau_M}) -\psi(\frac{1}{d}{\bm Z}_{k \land \tau_M}{\bm 1}_d) \leq L_{1} \frac{\sup_{0\leq i \leq k\land \tau_M}\eta_{i} \ell_M}{1-\lambda_2}\leq r,
\end{equation*}
for any $i\in [d]$ and any $\alpha_{ub}$-upper-bounded $\{\eta_k\}$. 

Therefore, for any $M>0$,  any $\alpha_{ub}$-upper-bounded sequences $\{\eta_{k}\}$, $\{\delta_{k}\}$, and any $(\alpha_{ub}, T, \{\eta_{k}\})$-controlled sequence $\{\Xi_{k+1}\}$, it holds that $\{\eta_{k}\}$ and $\{s_{k}\}$ are $\alpha$-upper-bounded, $\{\frac{1}{d}\Xi_{k+1}{\bm 1}_d \mathbb{1}_{\tau_M>k}\}$ is $(\alpha, T, \{\eta_{k}\})$-controlled, thus $\{{\bm z}_{i, k\land \tau_{M}}\}$ is restricted in $\mathcal{L}_{2r}, \forall i \in [d]$. Taking $M > \sup\{\|{\bm Z}\|: {\bm Z}\in \mathcal{L}^d_{2r}\}$, from the definition
of the stopping time $\tau_{M}$, we can derive that $\tau_{M}=+\infty$, then ${\bm z}_{i,k}={\bm z}_{i, k\land \tau_{M}}\in \mathcal{L}_{2r}, \forall i \in [d], k\geq 0$, which completes the proof.  
\end{proof}

The following lemma from \cite[Lemma 3.1]{sundhar2010distributed} concerns the limit property of a convolution-like scalar sequence.

\begin{lem}\label{lem:2} 
Let $\{\gamma_{k}\}$ be a sequence of real numbers. If $0<a<1$ and $\lim_{k \rightarrow \infty} \gamma_k=\gamma$, then it holds that $\lim_{k \rightarrow \infty} \sum_{\ell=0}^k a^{k-\ell} \gamma_{\ell}=\frac{\gamma}{1-a}$.

\end{lem}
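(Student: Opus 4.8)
The plan is to reduce the statement to the elementary fact that a geometrically weighted average of a convergent sequence converges to the same limit (a discrete Toeplitz/Abel-type argument). First I would set $a_\ell := \gamma_\ell - \gamma$, so that $a_\ell \to 0$, and write
\[
\sum_{\ell=0}^k \alpha^{k-\ell}\gamma_\ell \;=\; \gamma\sum_{\ell=0}^k\alpha^{k-\ell} \;+\; \sum_{\ell=0}^k\alpha^{k-\ell}a_\ell.
\]
For the first term, the substitution $j = k-\ell$ gives $\sum_{\ell=0}^k\alpha^{k-\ell} = \sum_{j=0}^k\alpha^j = \frac{1-\alpha^{k+1}}{1-\alpha}$, which tends to $\frac{1}{1-\alpha}$ because $0<\alpha<1$; hence $\gamma\sum_{\ell=0}^k\alpha^{k-\ell}\to\frac{\gamma}{1-\alpha}$. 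It then remains only to show that the second term vanishes as $k\to+\infty$.

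To show $\sum_{\ell=0}^k\alpha^{k-\ell}a_\ell\to 0$, I would fix $\varepsilon>0$ and choose $N$ with $|a_\ell|<\varepsilon$ for all $\ell\geq N$, and then split the sum at $N$. The tail part is bounded by $\sum_{\ell=N}^k\alpha^{k-\ell}|a_\ell| \leq \varepsilon\sum_{\ell=N}^k\alpha^{k-\ell} \leq \varepsilon\sum_{j=0}^{+\infty}\alpha^j = \frac{\varepsilon}{1-\alpha}$. The head part is bounded by $\sum_{\ell=0}^{N-1}\alpha^{k-\ell}|a_\ell| \leq \alpha^{k-N+1}\sum_{\ell=0}^{N-1}|a_\ell|$; since a convergent sequence is bounded, $\sum_{\ell=0}^{N-1}|a_\ell|$ is a fixed finite constant, and because $0<\alpha<1$ this bound tends to $0$ as $k\to+\infty$ with $N$ held fixed. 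Combining the two estimates yields $\limsup_{k\to+\infty}\bigl|\sum_{\ell=0}^k\alpha^{k-\ell}a_\ell\bigr|\leq\frac{\varepsilon}{1-\alpha}$, and letting $\varepsilon\downarrow 0$ gives the claim, so that $\sum_{\ell=0}^k\alpha^{k-\ell}\gamma_\ell\to\frac{\gamma}{1-\alpha}$.

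There is no real obstacle here, as the result is entirely standard. The only point requiring a little care is the head–tail split in the second step: one uses the convergence $a_\ell\to 0$ to control the infinitely many tail terms through the geometric-series bound $\sum_{j\geq 0}\alpha^j=\frac{1}{1-\alpha}$, and uses the decay of the weight $\alpha^{k-\ell}$ as $k\to+\infty$ to kill the finitely many (possibly large) head terms. This is exactly the mechanism underlying the Toeplitz lemma, here specialized to the geometric kernel $\alpha^{k-\ell}$ whose row sums converge to $\frac{1}{1-\alpha}$ rather than to $1$.
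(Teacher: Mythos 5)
Your proof is correct and complete: the decomposition $\gamma_\ell = \gamma + a_\ell$, the geometric-series computation for the constant part, and the head--tail split controlling the $a_\ell$ part are exactly the standard argument for this Toeplitz-type lemma. The paper itself gives no proof, simply importing the statement from the cited reference, so there is nothing to compare against beyond noting that your argument is the expected one.
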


Combining Theorem \ref{theo:globalstability} and Lemma \ref{lem:2}, we demonstrate in Theorem \ref{theo:aysm} that the sequence $\{\bm Z_{k}\}$ asymptotically reaches consensus and converges to the stable set $\mathcal{A}$ with diminishing upper-bounded $\{\eta_{k}\}$, $\{\delta_{k}\}$ and $(\alpha_{ub}, 0, T, \{\eta_{k}\})$ asymptotically controlled noises.

\begin{theo}\label{theo:aysm}
Suppose Assumption \ref{Assumption_framework} holds, and let $\mathcal{Z}_{0}$ be a compact subset of $\mathbb{R}^{m}$, $\{{\bm Z}_{k}\}$  be generated by \eqref{Eq_Framework} with ${\bm Z}_{0}={\bm z}_{0}{\bm 1}_d^{\top}, {\bm z}_{0}\in \mathcal{Z}_{0}$. Then there exist $\alpha_{ub}>0, T>0$ such that for any $\alpha_{ub}$-upper-bounded diminishing sequences $\{\eta_{k}\}$ and $\{\delta_{k}\}$, and any $(\alpha_{ub}, 0, T, \{\eta_{k}\})$-asymptotically controlled sequence $\{\Xi_{k+1}\}$, it follows that \begin{equation*}
\lim_{k\to \infty} \mathrm{dist}({\bm Z}_{k}, \{{\bm Z}\in \mathbb{R}^{m \times d}: {\bm Z}= {\bm z} {\bm 1}^{\top}, {\bm z}\in \mathcal{A}\}) =0, 
\end{equation*}
and 
$\psi({\bm z}_{i,k})$ converges for each $i \in [d].$
\end{theo}

\begin{proof}
From Theorem \ref{theo:globalstability}, there exists $r$, $\alpha_{\text{ub}}, T>0$, for any $\alpha_{\text{ub}}$-upper-bounded $\{\eta_k\}$ and $\{\delta_{k}\}$, and any $(\alpha_{ub}, T, \{\eta_{k}\})$-controlled sequence $\{\Xi_{k+1}\}$, the sequence $\{{\bm Z}_{k}\} \subseteq \mathcal{L}_{2r}^{d}$. Taking sufficiently large $M$, \eqref{eq:prop_relation} becomes 
\begin{equation*}
\frac{1}{d}{\bm Z}_{k+1}{\bm 1}_d \in \frac{1}{d}{\bm Z}_{k}{\bm 1}_d -\eta_{k}[\Phi^{s_k}(\frac{1}{d}{\bm Z}_{k}{\bm 1}_d)+\frac{1}{d}\Xi_{k+1}{\bm 1}_d],
\end{equation*}  
where $s_k:= \delta_k + \|{\bm Z}_{\perp, k}\|$. Since $\lim_{k\to \infty}\|{\bm Z}_{\perp, k}\|\leq \lim_{k\to \infty}\sum_{i=0}^{k-1} \lambda_2^{k-1-i}\|\eta_{i}({\bm H}_{i}+\Xi_{i+1})\|=0$, we have $\lim_{k\to \infty} s_{k} =0$. 

Moreover, $(\alpha_{ub}, 0, T, \{\eta_{k}\})$-asymptotically controlled sequence $\{\Xi_{k+1}\}$ implies that for any $T'>0$,
\begin{equation*}
\lim_{s\to \infty}\sup_{s\leq i \leq \Lambda_{\eta}(\lambda_{\eta}(s)+T')}\norm{\sum_{k=s}^{i}\eta_k \frac{1}{d}\Xi_{k+1}{\bm 1}_d}=0,   
\end{equation*}
then it follows that the interpolated process of $\{\frac{1}{d}{\bm Z}_{k}{\bm 1}_d\}$ is a perturbed solution of \eqref{eq:diff_inclu}.

By combining Lemma \ref{The_convergence_beniam}, we can infer that, any cluster point of $\{\frac{1}{d}{\bm Z}_{k}{\bm 1}_d\}$ lies in $\mathcal{A}$ and the sequence $\{\psi(\frac{1}{d}{\bm Z}_{k}{\bm 1}_d)\}$ converges. As $\lim_{k\to \infty}\|{\bm Z}_{\perp, k}\|=0$, we conclude that, any cluster point of $\{{\bm z}_{i,k}\}$ lies in $\mathcal{A}$ and $\{\psi({\bm z}_{i,k})\}$ converges, for any $i\in [d]$. This completes the proof. 
\end{proof}

\subsection{Convergence with random reshuffling}\label{subsec:1}

Given a family of locally bounded, graph-closed set-valued mappings $\{\mathcal{U}_{i,j}: 1\leq i \leq d, 0\leq j \leq N-1\}$, we consider framework \eqref{Eq_Framework} with the following random reshuffled update scheme,
\begin{equation}\label{eq:reshuf}
\frac{1}{d}\Xi_{k+1}\mathbf{1}_d \in \frac{1}{d} \sum_{i=1}^{d} \mathcal{U}_{i, i_{k}}^{\rho_{k}}({\bm z}_{i,k})- \frac{1}{d}{\bm H}_{k}\mathbf{1}_d.
\end{equation}

One can see \eqref{eq:reshuf} as a detailed characterization of the relationship between the update direction and the iterates $\{{\bm z}_{i,k}\}$, thereby serving as a replacement for the role of \eqref{relation_H_PHI}. Hereafter, we make some assumptions on \eqref{eq:reshuf} and step-sizes $\{\eta_{k}\}$ in \eqref{Eq_Framework}.

\begin{assumpt}\label{asp:reshuffling}
\begin{enumerate}
\item For any $k_1, k_2 \in \mathbb{N}$, $\eta_{k_1} = \eta_{k_2}$ holds, if $\lfloor\frac{k_1}{N} \rfloor = \lfloor \frac{k_2}{N} \rfloor$.
\item The sequence of indexes $\{i_k\}$ is chosen from $\{0, \ldots, N-1\}$ by reshuffling. That is, $\{i_k : lN \leq k < (l +1)N\} = \{0 ,\ldots, N-1\}$ holds for any $l\in\mathbb{N}$.
\item The sequence $\{\rho_k\}$ is controlled by $\{\eta_{i}p({\bm Z}_{i})\}_{i\leq k}$, where $p: \mathbb{R}^{m \times d} \to \mathbb{R}$ is a locally bounded function. Moreover,  
$\lim_{k \to \infty} \rho_k =0$, whenever $\lim_{k\to \infty} \eta_{k} =0$ and  $\{{\bm Z}_{k}\}$ is bounded.
\item For any ${\bm z}\in \mathbb{R}^{m}$, $\frac{1}{N}\sum_{j=0}^{N-1} \mathcal{U}_{i,j}({\bm z}) \subseteq \Phi_{i}({\bm z})$.
\end{enumerate}    
\end{assumpt}

Indeed, we can treat each interval $\{k: lN\leq k < (l+1)N\}$ as an epoch. Assumption \ref{asp:reshuffling}(1) reveals that step-sizes $\{\eta_{k}\}$ remain unchanged within each epoch, which is a practical and mild condition. Assumption \ref{asp:reshuffling}(2) illustrates that the sequence of  indexes $\{i_k\}$ is drawn by reshuffling in each epoch. Assumption \ref{asp:reshuffling}(3) shows how the noisy averaged update scheme asymptotically approximates the Minkowski sum of random reshuffled set-valued mappings. The statement $\{\rho_k\}$ is controlled by $\{\eta_{i}p({\bm Z}_{i})\}_{i\leq k}$ means that $\rho_k$ is bounded by a linear combination of the components in $\{\eta_{i}p({\bm Z}_{i})\}_{i\leq k}$. In particular, $\rho_k$ can be an exponential moving average of $\{\eta_{i}p({\bm Z}_{i})\}_{i\leq k}$, given by $\sum_{i=0}^{k}\lambda_2^{k-i}\eta_{i}p({\bm Z}_{i})$. In Assumption \ref{asp:reshuffling}(4), we show the noisy update scheme at agent $i$ is randomly reshuffled from $N$ components of $\Phi_i$, which implies that $\frac{1}{Nd}\sum_{i=1}^{d}\sum_{k=0}^{N-1}\mathcal{U}_{i,i_k}({\bm z}) \subseteq \Phi({\bm z})$, building a bridge between the sequence $\{\frac{1}{d}{\bm Z}_{kN}{\bm 1}_d\}$ and the differential inclusion \eqref{eq:diff_inclu}.

It will be straightforward to see from Lemma \ref{lem:AVGtauM} that, in a certain sense, the averaged evaluation noise is ``hidden" over each epoch.

\begin{lem}\label{lem:AVGtauM}
Suppose Assumption \ref{Assumption_framework} and \ref{asp:reshuffling} hold, let $\mathcal{Z}_{0}$ be any compact set of $\mathbb{R}^{m}$ and ${\bm Z}_{0}={\bm z}_{0}{\bm 1}^{\top}, {\bm z}_{0}\in \mathcal{Z}_0$. $\{{\bm Z}_{k}\}$ is generated by \eqref{Eq_Framework} and the evaluation noise is introduced by random reshuffling \eqref{eq:reshuf}. Then for any $\hat{\varepsilon}\in (0, 1)$ and $M>0$, there exists a constant $\alpha>0$ such that for any $k\geq 0$, any $\eta_{kN}\in(0,\alpha]$, we have
\begin{equation}\label{eq:vanish}
\frac{1}{d}{\bm Z}_{(k+1)N \land \tau'_M}{\bm 1}_d \in  \frac{1}{d}{\bm Z}_{kN \land \tau'_M}{\bm 1}_d-N\eta_{kN} \Phi^{\hat{\varepsilon}}(\frac{1}{d}{\bm Z}_{kN \land \tau'_M}{\bm 1}_d)\mathbb{1}_{\tau'_M>kN}.
\end{equation}
where $\tau'_{M}:= \inf \{\lfloor \frac{k}{N} \rfloor N\in \mathbb{N}: \|{\bm Z}_{k}\|>M\}$.
\end{lem}

\begin{proof}
From Assumption \ref{asp:reshuffling}(3), for any $\hat{\varepsilon}\in (0, 1)$ and any $M>0$, there exists $\alpha_0>0$ such that $\rho_{k}< \frac{\hat{\varepsilon}}{3}$, when $\{\eta_{k}\}$ is upper bounded by $\alpha_0$, and $\|{\bm Z}_{k}\| \leq M+1$. We now introduce an additional notation:
\begin{equation*}
L_M := \sup\left\{ \norm{\mathcal{U}_{i,j}(x)}: \|x\|\leq M+1, i\in [d], 0\leq j\leq N-1 \right\}, 
\end{equation*}  
and for any $\hat{\varepsilon}\in (0, 1)$, define
\begin{equation*}
\alpha:= \min\left\{ \alpha_0, \frac{1}{ 1+(L_M+\frac{\hat{\varepsilon}}{3})}, \frac{\hat{\varepsilon}}{3N (1+L_M)+N\hat{\varepsilon}}, \frac{(1-\lambda_2)\hat{\varepsilon}}{3 \ell_M} \right\}.
\end{equation*}
Then for any $\eta_{kN} \in (0,  \alpha]$ and any $j \in \{0,\ldots, N-1\}$, it follows from \eqref{eq:ZktauM} that 
\begin{equation}\label{eq:ZperptauM_reshuffle}
\begin{aligned}
\|{\bm Z}_{\perp, (kN+j)\land \tau_M}\| & \leq \sum_{i=0}^{(kN+j-1)\land (\tau_M -1)}\lambda_2^{(kN+j-1)\land (\tau_M -1)-i}\|\eta_{i} ({\bm H}_{i}+ \Xi_{i+1})\| + \lambda_2^{(kN+j)\land \tau_M}\|{\bm Z}_{\perp, 0}\|\\
& \leq \frac{\sup_{0\leq i \leq (kN+j-1)\land (\tau_M-1)}\eta_{i} \ell_M}{1-\lambda_2} \leq \frac{\hat{\varepsilon}}{3},
\end{aligned}
\end{equation}
Without loss of generality, assuming $\tau_M\geq kN$, the update scheme \eqref{eq:reshuf} implies that   
\begin{equation}\label{eq:ZtauMreshuffling}
\begin{aligned}
           \norm{\frac{1}{d}{\bm Z}_{(kN+j)\land \tau_M}{\bm 1}_d - \frac{1}{d}{\bm Z}_{{kN\land \tau_M}}{\bm 1}_d} 
        & \leq \sum_{l = 0}^{j\land (\tau_M-kN-1)} \eta_{kN+l}\left\|\frac{1}{d}{\bm H}_{kN+l}{\bm 1}_d +\frac{1}{d}\Xi_{kN+l+1}{\bm 1}_d\right\|\\ 
        &\leq \sum_{l = 0}^{j\land (\tau_M-kN-1)} \eta_{kN+l} (L_M+\frac{\hat{\varepsilon}}{3}) \\
        & \leq \frac{(j+1)\hat{\varepsilon}(L_M+\frac{\hat{\varepsilon}}{3})}{3N (1+L_M)+N \hat{\varepsilon}}  \leq \frac{\hat{\varepsilon}}{3}.\\ 
\end{aligned}
    \end{equation}
As a result, we derive that $\sup_{0\leq j \leq  N-1} \norm{{\bm z}_{i, (kN+j)\land \tau_M} - \frac{1}{d}{\bm Z}_{{kN\land \tau_M}}{\bm 1}_d}  \leq \frac{2}{3}\hat{\varepsilon}$, $\forall i \in [d]$.  

Therefore, for any $k > 0$ and any $0\leq l \leq N-1$, the following inclusion holds: 
\begin{equation}\label{eq:recursion_avgZtauM}
\frac{1}{d}{\bm Z}_{(kN+l+1)\land \tau_M}{\bm 1}_d \in \frac{1}{d}{\bm Z}_{(kN+l)\land \tau_M}{\bm 1}_d - \eta_{kN+l}\frac{1}{d} \sum_{i=1}^d\left[\mathcal{U}_{i,i_{kN+l}}^{\frac{\hat{\varepsilon}}{3}}\left(\frac{1}{d}{\bm Z}_{{kN\land \tau_M}}{\bm 1}_d + \mathbb{B}({\bm 0},\frac{2\hat{\varepsilon}}{3})\right)\right]\mathbb{1}_{\tau_M>kN+l}.
\end{equation}
Moreover, the definition of $\tau'_{M}$ implies the following relations:
\begin{equation*}
\tau'_M>kN \Leftrightarrow \tau_M > kN+l, \quad \forall 0\leq l \leq N-1,
\end{equation*}
and 
\begin{equation*}
kN\land \tau'_{M}  \leq (kN+l+1)\land \tau_M,  \quad \forall 0\leq l \leq N-1.
\end{equation*}
Recursively utilizing \eqref{eq:recursion_avgZtauM} gives us that 
    \begin{equation*}
        \begin{aligned}
        & \frac{1}{d}{\bm Z}_{(k+1)N\land \tau'_M}{\bm 1}_d \\ \in{}& \frac{1}{d}{\bm Z}_{kN\land \tau'_M }{\bm 1}_d - \sum_{l = kN}^{(k+1)N-1} \eta_{kN} \frac{1}{d} \sum_{i=1}^d \left[\mathcal{U}_{i, i_l}(\frac{1}{d}{\bm Z}_{kN \land \tau'_M }{\bm 1}_d + \mathbb{B}({\bm 0},\hat{\varepsilon}))+\mathbb{B}({\bm 0},\frac{\hat{\varepsilon}}{3}) \right]\mathbb{1}_{\tau'_M >kN}\\
         \subseteq{} & \frac{1}{d}{\bm Z}_{kN\land \tau'_M}{\bm 1}_d - N\eta_{kN} \left( \frac{1}{d} \sum_{i=1}^d\frac{1}{N}\sum_{l=0}^{N-1}\ca{U}_{i,l}(\frac{1}{d}{\bm Z}_{kN \land \tau'_M}{\bm 1}_d + \mathbb{B}({\bm 0},\hat{\varepsilon}))  +\mathbb{B}({\bm 0},\frac{\hat{\varepsilon}}{3}) \right)\mathbb{1}_{\tau'_M >kN}\\
        \subseteq{}& \frac{1}{d}{\bm Z}_{kN\land \tau'_M}{\bm 1}_d - N\eta_{kN}  \Phi^{\hat{\varepsilon}} \left(\frac{1}{d}{\bm Z}_{kN\land \tau'_M}{\bm 1}_d \right)\mathbb{1}_{\tau'_M >kN}, 
        \end{aligned}
    \end{equation*}  
which is our desired result.
\end{proof}

Lemma \ref{lem:AVGtauM} illustrates that $\{ 
\frac{1}{d}{\bm Z}_{kN \land \tau'_M}{\bm 1}_d\}$ can be viewed as an inexact discretization of differential inclusion \eqref{eq:diff_inclu}. Then based on Lemma \ref{lem:xiao_stability}, we establish the uniform boundedness and asymptotic convergence of $\{{\bm Z}_{k}\}$ when adopting \eqref{eq:reshuf} under Assumption \ref{asp:reshuffling}. As the evaluation noise in \eqref{eq:vanish} vanishes over each epoch and is thus asymptotically controlled, Theorems \ref{thm:reshuffing} and \ref{theo:reshuff} can be viewed as special cases of Theorems \ref{theo:globalstability} and \ref{theo:aysm}.

\begin{theo}\label{thm:reshuffing}
Let $\mathcal{Z}_{0}$ be any compact set of $\mathbb{R}^{m}$. Suppose Assumption \ref{Assumption_framework} and \ref{asp:reshuffling} hold, and ${\bm Z}_{0} = {\bm z}_{0}{\bm 1}^{\top}_d,$ where ${\bm z}_{0}\in \mathcal{Z}_{0}$. Then for any $r> \max\{0, 4\sup_{{\bm z} \in \mathcal{Z}_0 \cup \mathcal{A}}\psi({\bm z})\}$, there exist $\alpha>0$, such that for any $\alpha$-upper-bounded  $\{\eta_{k}\}$, the sequence $\{{\bm Z}_{k}\}$ generated by \eqref{Eq_Framework} with \eqref{eq:reshuf}  is restricted in $\mathcal{L}_{2r}^d$.      
\end{theo}

\begin{proof}
For any $r> \max\{0, 4\sup_{{\bm z} \in \mathcal{Z}_0 \cup \mathcal{A}}\psi({\bm z})\}$,
we set $L_{2}$ to be the Lipschitz constant of $\psi$ in $\mathcal{L}_{r}+\mathbb{B}(0,r)$. According to Lemma \ref{lem:xiao_stability}, there exists $\alpha_{1}\in (0, \frac{3r}{2L_2})$ such that for any $\alpha_{1}$-upper bounded $\{\hat{\eta}_{k}\}$ and $\{\hat{\delta}_{k}\}$, the sequence generated by  
\begin{equation*}
\hat{\bm x}_{k+1} \in \hat{\bm x}_k - \hat{\eta}_k \Phi^{\hat{\delta}_{k}}(\hat{\bm x}_k), 
\end{equation*}
satisfies $\{\hat{\bm x}_{k}\}\subseteq \mathcal{L}_{r}$.  Given the iterative sequence $\{\frac{1}{d}{\bm Z}_{kN \land \tau'_M}{\bm 1}_d\}$, Lemma \ref{lem:AVGtauM} guarantees that for any $\delta_{kN} \in (0, \alpha_1)$, there exists $\alpha\in (0, \frac{\alpha_1}{N})$ such that whenever $\{\eta_{kN}\}$ is $\alpha$-upper bounded, the following inclusion holds:
\begin{equation*}
\frac{1}{d}{\bm Z}_{(k+1)N \land \tau'_M}{\bm 1}_d \in  \frac{1}{d}{\bm Z}_{kN \land \tau'_M}{\bm 1}_d-N\eta_{kN} \Phi^{\delta_{kN}}(\frac{1}{d}{\bm Z}_{kN \land \tau'_M}{\bm 1}_d)\mathbb{1}_{\tau'_M>kN}.
\end{equation*}
Hence, by Lemma \ref{lem:xiao_stability},  the sequence $\{\frac{1}{d}{\bm Z}_{kN \land \tau'_M}{\bm 1}_d\} \subseteq \mathcal{L}_r$. 

Without loss of generality, we assume $\tau_M> kN$. Following arguments analogous to those in \eqref{eq:ZperptauM_reshuffle}-\eqref{eq:ZtauMreshuffling}, we deduce that 
\begin{equation*}
\begin{aligned}
&\norm{{\bm z}_{i, (kN+j)\land \tau_M}- \frac{1}{d}{\bm Z}_{kN\land \tau'_M}{\bm 1}_d} \\
\leq{}& \norm{{\bm z}_{i, (kN+j)\land \tau_M} - \frac{1}{d}{\bm Z}_{(kN+j)\land \tau_M}{\bm 1}_d} + \norm{\frac{1}{d}{\bm Z}_{(kN+j)\land \tau_M}{\bm 1}_d- \frac{1}{d}{\bm Z}_{kN}{\bm 1}_d} \\
<{}& \frac{2\alpha_1}{3} < \frac{r}{L_2}.
\end{aligned}
\end{equation*}
Consequently,
\begin{equation}\label{eq:psi_zitauM}
\psi({\bm z}_{i, (kN+j)\land \tau_M}) \leq \psi(\frac{1}{d}{\bm Z}_{kN\land \tau'_M}{\bm 1}_d) + L_2\norm{{\bm z}_{i, (kN+j)\land \tau_M}- \frac{1}{d}{\bm Z}_{kN\land \tau'_M}{\bm 1}_d} \leq  2r, 
\end{equation}
for any $i\in[d], j\in \{0,\ldots, N-1\}$. 

To finish the proof, it suffices to  demonstrate that $\tau_M = +\infty$, for any $M>\sup\{\|{\bm Z}\|: {\bm Z}\in \mathcal{L}_{2r}^d\}$. If not, then there exists $k_1\in \mathbb{N}$ with $k_1 N \leq \tau_M < (k_1+1)N$. By \eqref{eq:psi_zitauM}, this implies ${\bm Z}_{\tau_M}\in \mathcal{L}_{2r}^d$. However, since $M>\sup\{\|{\bm Z}\|: {\bm Z}\in \mathcal{L}_{2r}^d\}$, the definition of $\tau_M$ forces $\norm{{\bm Z}_{\tau_M}}> M$, contradicting ${\bm Z}_{\tau_M}\in \mathcal{L}_{2r}^d$.

\end{proof}

\begin{theo}\label{theo:reshuff}
Suppose Assumption \ref{Assumption_framework} and \ref{asp:reshuffling} hold. Let $\mathcal{Z}_{0}$ be any compact set of $\mathbb{R}^{m}$, $\{{\bm Z}_{k}\}$ be generated by $\eqref{Eq_Framework}$ with \eqref{eq:reshuf}, and ${\bm Z}_{0} = {\bm z}_{0}{\bm 1}^{\top}_d, {\bm z}_{0}\in \mathcal{Z}_{0}$. Then there exists $\alpha_{\text{rr}}>0$, such that for any $\alpha_{\text{rr}}$-upper-bounded diminishing $\{\eta_{k}\}$ in \eqref{Eq_Framework}, it follows that 
\begin{equation*}
\lim_{k\to \infty} \mathrm{dist}({\bm Z}_{k}, \{{\bm Z}\in \mathbb{R}^{m \times d}: {\bm Z}= {\bm z} {\bm 1}^{\top}, {\bm z}\in \mathcal{A}\}) =0, 
\end{equation*}
and 
$\psi({\bm z}_{i,k})$ converges for each $i \in [d].$
\end{theo}

\begin{proof}
Recall from Theorem \ref{thm:reshuffing} that there exists $\alpha>0$, if $\{{\bm Z}_{k}\}$ iterates by \eqref{Eq_Framework} and \eqref{eq:reshuf} with $\alpha$-upper-bounded sequence $\{\eta_{kN}\}$ and initial point ${\bm Z}_{0} = {\bm z}_{0}{\bm 1}^{\top}_d, {\bm z}_{0}\in \mathcal{Z}_{0}$, then $\{{\bm Z}_{k}\}$ is restricted in $\mathcal{L}_{2r}^d$ for some $r>0$. Moreover, by Assumption \ref{asp:reshuffling}(3), we can further select a $\alpha_{\text{rr}} \in (0, \alpha)$ such that $\rho_k\leq 1$ for $\alpha_{\text{rr}}$-upper-bounded sequence $\{\eta_{kN}\}$.

We now analyze the case when $\{\eta_{kN}\}$ is upper bounded by $\alpha_{\text{rr}}$. With a slight abuse of notation, let $L:= \sup\left\{ \norm{\mathcal{U}_{i,j}(x)}: x \in \mathcal{L}_{2r}+\mathbb{B}({\bm 0},1), i\in [d], 0\leq j\leq N-1 \right\}$. 
Then for any $i\in [d]$ and $j \in \{0, \ldots, N-1\}$, we have
\begin{equation}\label{eq:zjzkn1}
\begin{aligned}
\norm{{\bm z}_{i, kN+j} - \frac{1}{d}{\bm Z}_{kN}{\bm 1}_d}   & \leq \norm{\frac{1}{d}{\bm Z}_{kN+j}{\bm 1}_d - \frac{1}{d}{\bm Z}_{kN}{\bm 1}_d} + \norm{{\bm Z}_{\perp, kN+j}}\\
& \leq \sum_{l = 0}^{j} \eta_{kN+l}\left\|\frac{1}{d}{\bm H}_{kN+l}{\bm 1}_d +\frac{1}{d}\Xi_{kN+l+1}{\bm 1}_d\right\|+ \norm{{\bm Z}_{\perp, kN+j}}\\ 
&\leq \sum_{l = 0}^{j} \eta_{kN+l} (L+1) +  \norm{{\bm Z}_{\perp, kN+j}}\\
& \leq N(L+1)\eta_{kN} + \max_{0\leq j \leq N-1}\left\{\norm{{\bm Z}_{\perp, kN+j}}\right\} \\
\end{aligned}
\end{equation}

According to Assumption \ref{Assumption_framework}(2), $\|{\bm H}_{k}+\Xi_{k+1}\|$ is uniformly bounded as $\{{\bm Z}_{k}\}$ is restricted in $\mathcal{L}_{2r}^d$.  Plugging Lemma \ref{lem:2}, we have 
\begin{equation}\label{eq:consensus_thm1}
\lim_{k\to \infty}\|{\bm Z}_{\perp, kN+j}\|\leq \lim_{k\to \infty}\sum_{i=0}^{kN+j} \lambda_2^{kN+j-i}\|\eta_{i}({\bm H}_{i}+\Xi_{i+1})\| =0.
\end{equation}

Define $\hat{\varepsilon}_{kN}:=  N(L+1)\eta_{kN} + \max_{0\leq j \leq N-1}\{\|{\bm Z}_{\perp, kN+j}\|\} + \max_{0\leq j\leq N-1}\{\rho_{kN+j}\} $. Similar to the arguments in Lemma \ref{lem:AVGtauM}, we can achieve that
\begin{equation}\label{eq:upsc}
\frac{1}{d}{\bm Z}_{(k+1)N}{\bm 1}_d \in \frac{1}{d}{\bm Z}_{kN}{\bm 1}_d - N\eta_{kN}  \Phi^{\hat{\varepsilon}_{kN}} \left(\frac{1}{d}{\bm Z}_{kN}{\bm 1}_d \right) . 
\end{equation}
Since $\{\eta_{kN}\}$ is diminishing,  Assumption \ref{asp:reshuffling}(3) indicates that $\hat{\varepsilon}_{kN} \to 0$ as $k\to \infty$. 

Indeed, the update scheme \eqref{eq:upsc} corresponds to  an inexact subgradient descent method corresponding for the differential inclusion \eqref{eq:diff_inclu}. Applying Lemma \ref{lem:extension_perturbed}, we obtain that the interpolated process of $\{\frac{1}{d}{\bm Z}_{kN}{\bm 1}_d\}$ is a perturbed solution of \eqref{eq:diff_inclu}. Then Lemma \ref{The_convergence_beniam} illustrates that 
\begin{equation*}
\lim_{k\to \infty} \mathrm{dist}(\frac{1}{d}{\bm Z}_{kN}{\bm 1}_d , \mathcal{A}) =0, \textit{ and } \psi(\frac{1}{d}\sum_{i=1}^{d}{\bm z}_{i,kN}) \textit{ converges},
\end{equation*}
and \eqref{eq:zjzkn1} further implies that 
\begin{equation}\label{eq:singletoA}
\lim_{k\to \infty} \mathrm{dist}({\bm z}_{i,kN+j}, \mathcal{A}) =0, \textit{ and } \psi({\bm z}_{i,kN+j}) \textit{ converges, for any }  0\leq j \leq N-1.
\end{equation}
Combining \eqref{eq:consensus_thm1} and \eqref{eq:singletoA} leads to the desired result.    
\end{proof}

\subsection{Convergence under with-replacement sampling}\label{subsec:2}

In this part, we handle the circumstance that the evaluation noise $\{\Xi_{k+1}\}$ arises from with-replacement sampling. Let  $(\Omega, \mathcal{F}, \mathbb{P})$ be the probability space. We consider the following expression of $\{\Xi_{k+1}\}$:
\begin{equation}\label{eq:chi}
\Xi_{k+1} = \chi({\bm Z}_{k}, \xi_{k}) = [\chi_{1}({\bm z}_{1,k}, \xi_{1,k}), \ldots,  \chi_{d}({\bm z}_{d,k}, \xi_{d, k})]
\end{equation}
where each $\chi_i: \mathbb{R}^{m} \times \Omega \to \mathbb{R}^{m}$ is a Borel function, $\{\xi_{i, k}\}_{k\geq 0}$ is a sequence of sampling data picked in $\Omega$ at agent $i$. With-replacement sampling means that the sampling data $\{\xi_{i, k}\}_{k\geq 0}$ are  randomly drawn from the same probability distribution $\mathcal{P}_{i}$, 
which allows  $\xi_{i, k}$ to occur multiple times.

We stipulate the following assumptions on framework \eqref{Eq_Framework} with \eqref{eq:chi}.

\begin{assumpt}
\label{Assumption_stochastic}
	\begin{enumerate}
		\item For any ${\bm Z} \in \mathbb{R}^{m\times d}$,  $\bb{E}_{\xi}[\chi({\bm Z}, \xi)] = 0$. 
		Moreover, there exists a locally bounded function $q: \mathbb{R}^{m\times d} \to \bb{R}_+$, such that 
        \begin{equation*}\norm{\chi({\bm Z}, \xi)} \leq q({\bm Z}), \quad \text{ for almost every } \xi \in \Omega^d.
        \end{equation*}
		\item The sequence of step-sizes $\{\eta_k\}$ satisfies
\begin{equation*}
			\sum_{k = 0}^{\infty} \eta_k = +\infty, \quad \lim_{k\to\infty} \eta_k \log(k) = 0. 
		\end{equation*}
	\end{enumerate}    
\end{assumpt}

Recall from Proposition \ref{prop:avgZk+1Tau}, we have
\begin{equation}\label{eq:avgTam}
\frac{1}{d}{\bm Z}_{(k+1)\land \tau_M}{\bm 1}_d \in \frac{1}{d}{\bm Z}_{k \land \tau_M}{\bm 1}_d -\eta_{k}[\Phi^{s_k}(\frac{1}{d}{\bm Z}_{k\land \tau_M}{\bm 1}_d)+\frac{1}{d}\Xi_{k+1}{\bm 1}_d]\mathbb{1}_{\tau_{M}>k}.
\end{equation}  
Under Assumption \ref{Assumption_stochastic}(1), the sequence $\{\frac{1}{d}\Xi_{k+1}{\bm 1}_d \mathbb{1}_{\tau_M> k}\}$ is a uniformly bounded martingale difference sequence with respect to $\mathcal{F}_{k}:= \sigma(\{{\bm Z}_{i}, i\leq k\})$. With the $o(1/\log(k))$ step-sizes $\{\eta_{k}\}$ and any given $\iota>0$, the following lemma illustrates that sequence $\{\frac{1}{d}\Xi_{k+1}{\bm 1}_d \mathbb{1}_{\tau_M> k}\}$ is $(\iota, T, \eta_{k})$-controlled with arbitrary high probability. 

\begin{lem}\label{lem:high_prop_control}
Suppose Assumption \ref{Assumption_framework} and \ref{Assumption_stochastic} hold. Then for any $\varepsilon, T, \iota >0$, there exists $\beta_0>0$, such that for any $\beta_0$-upper-bounded sequence $\{\eta_{k}\}$, it holds that 
\begin{equation*}\label{withreplace1}
\mathbb{P}\left(\left\{\exists s\geq 0, \text{ such that } \sup_{s\leq i \leq \Lambda_{\eta}(\lambda_{\eta}(s)+T)} \norm{\sum_{k=s}^{i} \eta_{k} \frac{1}{d}\Xi_{k+1}{\bm 1}_d \mathbb{1}_{\tau_M>k}}\geq \iota \right\} \right)   \leq \varepsilon.
\end{equation*}
    
\end{lem}

Lemma \ref{lem:high_prop_control} is a direct corollary of \cite[Proposition 4.2]{benaim2006dynamics} and \cite[Proposition A.5]{xiao2023convergence}, hence we omit its proof for simplicity. Based on this lemma, we present Theorem \ref{theo:withreplace_thm1} and \ref{theo:with} to show the uniform boundedness and convergence properties of framework \eqref{Eq_Framework} together with evaluation noise \eqref{eq:chi}.

\begin{theo}\label{theo:withreplace_thm1}
Suppose Assumption \ref{Assumption_framework} and \ref{Assumption_stochastic} hold, and let $\mathcal{Z}_{0}$ be any compact set of $\mathbb{R}^{m}$, ${\bm Z}_{0}={\bm z}_{0}{\bm 1}_d^{\top}, {\bm z}_{0}\in \mathcal{Z}_{0}$.
The sequence $\{{\bm Z}_{k}\}$ is generated by \eqref{Eq_Framework}, and the evaluation noise is introduced by with-replacement sampling \eqref{eq:chi}.
Then for any $r> \max\{0, 4\sup_{x\in \mathcal{Z}_0\cup \mathcal{A}}\psi(x)\}$ and  any $\varepsilon \in (0,1)$, there exists $\beta>0$, such that for any $\beta$-upper-bounded sequence $\{\eta_{k}\}$ and $\{\delta_{k}\}$,  $\{{\bm Z}_{k}\}$  is restricted in $\mathcal{L}_{2r}^{d}$ with probability at least $1-\varepsilon$. 
\end{theo}

Let $\Omega_0$ denote the event that the sequence $\{\frac{1}{d}\Xi_{k+1}{\bm 1}_d \mathbb{1}_{\tau_M>k}\}$ is $(\iota, T, \eta_k)$-controlled. 
Lemma \ref{lem:high_prop_control} gives us that 
\begin{equation*}
\mathbb{P}(\Omega_0)   \geq 1-\varepsilon.
\end{equation*}
Applying Theorem \ref{theo:globalstability} over event $\Omega_0$ directly yields Theorem \ref{theo:withreplace_thm1}. Hence, we omit the proof for clarity.

\begin{theo}\label{theo:with}
Suppose Assumption \ref{Assumption_framework} and \ref{Assumption_stochastic} hold, $\{\delta_k\}$ in \eqref{relation_H_PHI} satisfies $\lim_{k\to \infty} \delta_k = 0$. Let $\mathcal{Z}_{0}$ be any compact set of $\mathbb{R}^{m}$, and ${\bm Z}_{0}= {\bm z}_{0}{\bm 1}_{d}^{\top}, {\bm z}_{0}\in \mathcal{Z}_0$.  Then for any $\varepsilon\in (0,1)$, there exists $\beta>0$, such that for any $\{\eta_{k}\}, \{\delta_k\}$ upper-bounded by $\beta$, and any $\{{\bm Z}_{k}\}$ generated by \eqref{Eq_Framework} with evaluation noise \eqref{eq:chi}, we have 
\begin{equation*}
\mathbb{P}\left(\lim_{k\to \infty} \mathrm{dist}({\bm Z}_{k}, \{{\bm Z}\in \mathbb{R}^{m \times d}: {\bm Z}= {\bm z} {\bm 1}^{\top}, {\bm z}\in \mathcal{A}\}) =0 \right) \geq 1-\varepsilon,   
\end{equation*}
and $\mathbb{P}(\psi({\bm z}_{i,k}) \textit{ converges})\geq 1-\varepsilon$ for any $i\in [d]$.
\end{theo}

\begin{proof}
Denote the event $\Omega_1:=\{\{{\bm Z}_{k}\} \subseteq \mathcal{L}_{2r}^{d}\}$. By Theorem \ref{theo:globalstability} and \ref{theo:withreplace_thm1}, we have 
\begin{equation*}
\Omega_1 \supseteq \Omega_0, \quad 
\mathbb{P}(\Omega_1) \geq \mathbb{P}(\Omega_0)  \geq 1-\varepsilon.
\end{equation*}
Notice that $\{\frac{1}{d}\Xi_{k+1}{\bm 1}_d\}$ is a uniformly bounded martingale difference sequence over $\Omega_1$ and $\eta_{k} \sim o(1/\log(k))$. Invoking \cite[Proposition 1.4, Remark 1.5]{benaim2005stochastic}, we obtain that 
\begin{equation*}\label{eq:noisecontrolled}
\lim_{s\to \infty}\sup_{s\leq i \leq \Lambda_{\eta}(\lambda_{\eta}(s)+T)}\norm{\sum_{k=s}^{i}\eta_k \frac{1}{d}\Xi_{k+1}{\bm 1}_d}=0  
\end{equation*}
holds almost surely over both $\Omega_1$ and $\Omega_0$, for any $T>0$. 

Employing Theorem \ref{theo:aysm} over  $\Omega_0$, we completes the proof. 
    
\end{proof}

\section{Convergence Properties of Decentralized Stochastic Subgradient-based Methods}\label{sec:4}

In this section, we demonstrate the flexibility of our proposed framework \eqref{Eq_Framework} by showing that it encloses a wide range of popular decentralized stochastic subgradient-based methods, including DSGD, decentralized generalized SGD with momentum (DGSGD-M), and DSGD with gradient-tracking technique (DSGD-T).  More importantly, our theoretical analysis provides convergence guarantees for these methods in the minimization of nonsmooth path-differentiable functions without Clarke regularity. Additionally, Theorem \ref{thm:DSGD2}, \ref{thm:DSGDM2}  establish that DSGD and DGSGD-M can avoid spurious critical points.

To facilitate analysis, we need the following assumptions on problem \eqref{Prob_DOP}. 
\begin{assumpt}\label{Assumption_obj}
\begin{enumerate}
\item For each $i \in [d]$ and ${\bm s}_{l}\in \mathcal{S}_{i}$,  $F_i(\cdot; {\bm s}_{l})$ is a definable function, which admits a definable conservative field $D_{F_i (\cdot; {\bm s}_{l})}$.

\item The objective function $f$ is coercive. 
\end{enumerate}
\end{assumpt}

As discussed in Section \ref{sec:conser}, the class of definable functions is wide enough to enclose the loss functions of common nonsmooth neural networks. Moreover, \cite{bolte2021conservative} demonstrates that there exists a definable conservative field for any definable loss function constructed by definable blocks, within which the results yielded by Automatic Differentiation (AD) algorithms are contained. Henceforth, Assumption \ref{Assumption_obj}(1) is mild and reasonable in practical scenarios.

For conciseness, we denote $D_{F_{i,l}}:= D_{F_i(\cdot; {\bm s}_{l})}$. According to \cite[Corollary 4]{bolte2021conservative} and Assumption \ref{Assumption_obj}(1), 
\begin{equation}\label{eq:defin_cons}
    D_{f_i}({\bm x}) := \frac{1}{|\mathcal{S}_{i}|}\sum_{l=1}^{|\mathcal{S}_{i}|}  D_{F_{i,l}}({\bm x}).
\end{equation}
is a definable conservative field for $f_{i}$. Recall that $f({\bm x}) = \frac{1}{d} \sum_{i = 1}^d  f_i({\bm x})$ defined in \eqref{Eq_defin_f}, we can choose its definable conservative field $\D_f$ as
\begin{equation}
    D_f({\bm x}) :=  \conv\left(\frac{1}{d} \sum_{i = 1}^d D_{f_i}({\bm x}) \right). 
\end{equation}

\subsection{Decentralized stochastic subgradient descent}
In this subsection, we establish the global convergence for decentralized stochastic subgradient descent (DSGD) by showing that it fits into our proposed framework \eqref{Eq_Framework}. The detailed algorithm of DSGD is presented in Algorithm 1. Intuitively, for each $i \in [d]$, the consensus term $\sum_{j\in \mathcal{N}_{i}}{\bm W}(i,j) {\bm x}_{j,k}$ enforces  ${\bm x}_{i,k}$ towards the mean over all the agents (i.e., $\frac{1}{d} \sum_{i \in [d]} {\bm x}_{i,k}$). Moreover, as ${\bm d}_{i,k}$ refers to the (mini-batch) stochastic subgradient of $f_i$ at ${\bm x}_{i,k}$, the term $-\eta_k {\bm d}_{i,k}$ can be viewed as a single SGD step to minimize $f_i$ in the $i$-th agent.

\begin{algorithm}
\caption{DSGD for solving \eqref{Prob_DOP}. }
\label{alg:dsgd}
\begin{algorithmic}[1] 
\Require Initial point ${\bm x}_0 \in \mathbb{R}^{n}$ and a mixing matrix ${\bm W}$.
\For{all $i\in [d]$ in parallel}
\State Set $k \gets 0$. Initialize ${\bm x}_{i,k}= {\bm x}_0 $
\While{not terminated}
    \State Randomly select a mini-batch $\mathcal{B}_{i, k} \subseteq \mathcal{S}_{i}$;
    \State Compute ${\bm d}_{i,k} \in \frac{1}{|\mathcal{B}_{i,k}|} \sum_{{\bm s}_l \in \mathcal{B}_{i,k}}  D_{F_{i,l}}({\bm x}_{i,k})$;
    \State Choose the step-size $\eta_k$;
    \State Communicate and update the local variables 
    $${\bm x}_{i,k+1} = \sum_{j\in \mathcal{N}_{i}}{\bm W}(i,j) {\bm x}_{j,k} - \eta_k {\bm d}_{i,k};$$
    \State Set $k \gets k + 1$;
\EndWhile
\EndFor
\State \Return ${\bm X}_k:= [{\bm x}_{1,k}, \ldots, {\bm x}_{d, k}]$;
\end{algorithmic}
\end{algorithm}

In Algorithm \ref{alg:dsgd}, the evaluation noise is introduced by the selection of mini-batch $\mathcal{B}_{i,k}$. Now, we discuss two different scenarios for selecting mini-batches and step-sizes.

\begin{assumpt}[Random reshuffling]\label{Assumption_app1}
\mbox{}
    \begin{enumerate}
    \item $\mathcal{B}_{i,k}$ is selected from set $\mathcal{S}_i$ by random reshuffling, meaning that for every epoch index $l \in \mathbb{N}$, the following holds:
    \begin{equation*}
\bigcup_{k=lN}^{(l+1)N-1}\mathcal{B}_{i,k}= \mathcal{S}_i.
    \end{equation*}
    \item Let $\{\hat{\eta}_{k}\}$ be a prefixed sequence such that $\sum_{k=0}^{\infty} \hat{\eta}_{k} = +\infty,$  and $\lim_{k\to \infty}\hat{\eta}_{k}=0$. Moreover, for any $j_1, j_2 \in \mathbb{N}$, $\hat{\eta}_{j_1} = \hat{\eta}_{j_2}$ holds, if $\lfloor\frac{j_1}{N} \rfloor = \lfloor \frac{j_2}{N} \rfloor$. We set $\eta_{k}= c \hat{\eta}_{k}$, $c>0$ for all $k\in\mathbb{N}$.      
    \end{enumerate}
\end{assumpt}

\begin{assumpt}[With-replacement sampling]\label{Assumption_app2}
\mbox{}
\begin{enumerate}
\item $\mathcal{B}_{i,k}$ is selected from set $\mathcal{S}_i$ by with-replacement sampling for each $k\in \mathbb{N}$.
\item Let $\{\hat{\eta}_{k}\}$ be a prefixed sequence satisfying 
        \begin{equation*}
       \sum_{i = 0}^{\infty} \hat{\eta}_k = +\infty, \text{ and } \lim_{k\to \infty} \hat{\eta}_k \log(k) = 0. 
        \end{equation*}
We set $\eta_{k}= c \hat{\eta}_{k}$, $c>0$ for all $k\in\mathbb{N}$.  
\end{enumerate}
\end{assumpt}

Assumption \ref{Assumption_app1}(1) and \ref{Assumption_app2}(1) employ different sampling techniques, leading to different forms of evaluation noise.  In subsequent analysis, we will demonstrate that they conform to \eqref{eq:reshuf} and \eqref{eq:chi}, respectively.

Assumption \ref{Assumption_app1}(2) and \ref{Assumption_app2}(2) both assume that $\{\eta_{k}\}$ is composed of a scaling parameter $c$ and a fixed diminishing sequence of step-sizes (at most in the rate of $o(1/\log(k))$), hence enabling great flexibility in choosing the step-sizes $\{\eta_k\}$ in practical scenarios.

In the following theorem, we establish the global convergence of Algorithm \ref{alg:dsgd} by showing that Algorithm \ref{alg:dsgd} fits into our proposed framework with $\Phi = \D_f$, $\psi = f$ and $\ca{A} = \{{\bm x} \in \Rn: {\bm 0} \in \D_f({\bm x})\}$. 
\begin{theo}
    \label{thm:dsgd}
    Suppose Assumption \ref{Assumption_obj} holds. Let $\{{\bm X}_k\}$ be the sequence generated by Algorithm \ref{alg:dsgd}. 
    \begin{enumerate}[label={(\arabic*)}]
        \item When Assumption \ref{Assumption_app1} holds, there exists $\alpha_{c}>0$, for any $c\in (0, \alpha_{c})$, it holds that almost surely, any cluster point of $\{{\bm X}_{k}\}$ is a $\D_f$-critical point of \eqref{Prob_DOP} and $\{f({\bm x}_{i,k}): k \in \bb{N}\}$ converges for any $i \in [d]$.
        \item When Assumption \ref{Assumption_app2} holds, for any $\varepsilon>0$, there exists $\alpha_{c}>0$, for any $c\in (0, \alpha_{c})$, 
        \begin{equation*}
            \mathbb{P}\left(\lim_{k \to \infty}\mathrm{dist}({\bm X}_{k}, \{{\bm X}\in \mathbb{R}^{n\times d}: {\bm X}={\bm x}{\bm 1}^{\top}, {\bm 0} \in D_f({\bm x})\}) =0  \right)\geq 1-\varepsilon,
        \end{equation*}
and 
\begin{equation*}
\mathbb{P}(f({\bm x}_{i,k}) \textit { converges })\geq 1-\varepsilon, \textit{ }\forall i \in [d].
\end{equation*}
    \end{enumerate}
\end{theo}
\begin{proof}
As demonstrated in Line 7 of Algorithm \ref{alg:dsgd}, the sequence $\{{\bm X}_{k}\}$ follows the update scheme, 
\begin{equation}\label{eq:thm:dsgd_0}
    {\bm X}_{k+1} = {\bm X}_k {\bm W} - \eta_k {\bm D}_{k}, ~  {\bm D}_k =  [{\bm d}_{1,k}, \ldots, {\bm d}_{d,k}]. 
\end{equation}

Let the filtration $\ca{F}_k := \sigma(\{{\bm X}_l : l\leq k\})$. We set ${\bm Z}_k := {\bm X}_k$, ${\bm H}_k := \bb{E}[{\bm D}_k| \ca{F}_k]$ and $\Xi_{k+1} := {\bm D}_k - {\bm H}_k$. To demonstrate that Algorithm \ref{alg:dsgd} fits into our framework \eqref{Eq_Framework}, it suffices to check the update scheme \eqref{eq:thm:dsgd_0} is a special case of \eqref{Eq_Framework} satisfying Assumption \ref{Assumption_framework}, and verify the validity of  Assumption \ref{asp:reshuffling} or \ref{Assumption_stochastic} with different sampling techniques.  Assumption \ref{Assumption_framework}(1) follows immediately from Assumption \ref{Assumption_app1}(2) or \ref{Assumption_app2}(2); Assumption \ref{Assumption_framework}(2) follows quickly from local boundedness of the conservative field $D_{F_{i,l}}$. Nevertheless, how to verify \eqref{relation_H_PHI} varies across different cases. For case (1) in Theorem \ref{thm:dsgd}, we set  $\Phi_i := \D_{f_i}$. In this case, the nonnegative sequence $\{\delta_{k}\}$ in \eqref{relation_H_PHI} is not subject to any other restrictions, hence we may select a sufficiently large one to meet the practical requirement for ${\bm H}_{k}$ and $\Phi_i$. This choice makes Assumption \eqref{relation_H_PHI} hold trivially. For case (2), we need to carefully choose a diminishing $\{\delta_{k}\}$ so that it conforms to the conditions in Theorem \ref{theo:with}.  

Specifically, we note that $\mathbb{E}[{\bm d}_{i,k}|\mathcal{F}_{k}] \in \mathbb{E}_{\mathcal{B}_{i, k}}[\frac{1}{|\mathcal{B}_{i,k}|} \sum_{{\bm s}_l \in \mathcal{B}_{i,k}}  D_{F_{i,l}}({\bm x}_{i,k})]= D_{f_{i}} ({\bm x}_{i,k})$ in case (2), we can conclude that
\begin{equation*}
    \frac{1}{d}{\bm H}_k {\bm 1}_d \in \frac{1}{d}\sum_{i = 1}^d D_{f_i}({\bm x}_{i,k}) \subseteq \conv\left( \frac{1}{d}\sum_{i = 1}^d D_{f_i}({\bm x}_{i,k}) \right). 
\end{equation*}
Therefore, by choosing $\Phi_i := \D_{f_i}$ and $\delta_{k}:=0$, we have verified the validity of \eqref{relation_H_PHI}.

Furthermore, according to \cite[Section 6]{bolte2021conservative} and the path-differentiability of $f$, it follows that  $f$ is the Lyapunov function of the differential inclusion $\frac{\mathrm{d}{\bm x}}{\mathrm{d}t} \in -\D_f({\bm x})$, 
and admits the stable set $\{{\bm x} \in \Rn: {\bm 0} \in \D_f({\bm x})\}$. In addition, combining Assumption \ref{Assumption_obj}(1) and Proposition \ref{prop:finite} yields $\{f({\bm x}): {\bm 0} \in \D_f({\bm x})\}$ is finite in $\bb{R}$. As a result, we verify the validity of Assumption \ref{Assumption_framework} with $\Phi := \D_f$, $\psi := f$, and $\ca{A} := \{{\bm x} \in \Rn: {\bm 0} \in \D_f({\bm x})\}$. 

With the choice of $\mathcal{U}_{i,j}({\bm x}):=\frac{1}{|\mathcal{B}_{i,j}|} \sum_{{\bm s}_l\in \mathcal{B}_{i,j}} D_{F_{i,l}}({\bm x})$, where $\mathcal{B}_{i,j}, 0\leq j \leq N-1$ are subsets of equal size, and $\rho_k := 0$, Assumption \ref{asp:reshuffling} is satisfied for case (1); With the choice of $\xi_{i, k}:=\{\textit{Draw out } \mathcal{B}_{i,k} \textit{ from }$ $\textit{ agent } i\}$, $\delta_k :=0$ and $\chi_{i}({\bm x}, \xi_{i, k})\in \frac{1}{|\mathcal{B}_{i,k}|} \sum_{{\bm s}_l \in \mathcal{B}_{i,k}}  D_{F_{i,l}}({\bm x}) -D_{f_i}({\bm x})$, Assumption \ref{Assumption_stochastic} is satisfied for case (2). Finally, setting $\mathcal{X}_{0}$ as a compact set which contains ${\bm x}_0$, and leveraging Theorem \ref{theo:reshuff} and \ref{theo:with}, we completes our proof. 
\end{proof}

The following theorem demonstrates that Algorithm \ref{alg:dsgd} provably avoids spurious critical points. For clarity, we defer the full proof to Section \ref{appendix:1}.

\begin{theo}\label{thm:DSGD2}
Suppose Assumption \ref{Assumption_obj} holds. Let $\mathcal{X}_0$ be any compact set of $\mathbb{R}^n$, and sequence $\{{\bm X}_{k}\}$ be generated by Algorithm 1.  
    \begin{enumerate}[label={(\arabic*)}]
        \item When Assumption \ref{Assumption_app1} holds, there exists $\alpha_{c}>0$, a full-measure subset $\mathcal{S}$ of $(0, \alpha_c)$, and a full-measure subset $\mathcal{X}_1$ of $\mathcal{X}_0$, such that for any $c\in \mathcal{S}$ and ${\bm x}_0 \in \mathcal{X}_1$, it holds that almost surely, any cluster point of $\{{\bm X}_{k}\}$ is a Clarke-critical point of \eqref{Prob_DOP} and $\{f({\bm x}_{i,k}): k \in \bb{N}\}$ converges for any $i \in [d]$. 
        \item When Assumption \ref{Assumption_app2} holds, for any $\varepsilon>0$, there exists $\alpha_{c}>0$, a full-measure subset $\mathcal{S}$ of $(0, \alpha_c)$, and a full-measure subset $\mathcal{X}_1$ of $\mathcal{X}_0$, such that for any $c\in \mathcal{S}$ and ${\bm x}_0 \in \mathcal{X}_1$, 
        \begin{equation*}
            \mathbb{P}\left(\lim_{k \to \infty}\mathrm{dist}({\bm X}_{k}, \{{\bm X}\in \mathbb{R}^{n\times d}: {\bm X}={\bm x}{\bm 1}^{\top}, {\bm 0} \in \partial f({\bm x})\}) =0  \right)\geq 1-\varepsilon,
        \end{equation*}
and 
\begin{equation*}
\mathbb{P}(f({\bm x}_{i,k}) \textit { converges })\geq 1-\varepsilon, \textit{ }\forall i \in [d].
\end{equation*}
    \end{enumerate}

\end{theo}

\subsection{Decentralized generalized SGD with momentum }\label{sec:dgsgdm}

Momentum is a technique to accelerate the subgradient descent method in the relevant direction and dampen oscillations. In the single-agent setting, \cite{xiao2023convergence} investigates the global convergence properties of generalized SGD with momentum, where an auxiliary function $\phi: \Rn \to \bb{R}$ is introduced to normalize the update directions of the primal variables. 

In this subsection, motivated by the generalized SGD with momentum analyzed in \cite{xiao2023convergence}, we introduce the decentralized generalized stochastic subgradient descent method with momentum (DGSGD-M) and present the details in Algorithm \ref{alg:DSGD-M}. Different from DSGD in Algorithm \ref{alg:dsgd}, Algorithm \ref{alg:DSGD-M} introduces the auxiliary variables $\{{\bm y}_{i,k}: k\in \bb{N}\}$ to track the first-order moment of the stochastic subgradients of $f_i$. Therefore, Algorithm \ref{alg:DSGD-M} requires two rounds of communication to perform local consensus to $\{{\bm x}_{1,k}, \ldots, {\bm x}_{d,k}\}$ and $\{{\bm y}_{1,k}, \ldots, {\bm y}_{d,k}\}$, respectively. Moreover, similar to the generalized SGD analyzed in \cite{xiao2023convergence}, Algorithm \ref{alg:DSGD-M} introduces an auxiliary function $\phi$ to normalize the update directions of its primal variables $\{{\bm x}_{1,k}, \ldots, {\bm x}_{d,k}\}$.

\begin{algorithm}
\caption{DGSGD-M for solving \eqref{Prob_DOP}.}
\label{alg:DSGD-M}
\begin{algorithmic}[1] 
\Require Initial point ${\bm x}_{0}\in \mathbb{R}^n$, initial descent direction ${\bm y}_{0}\in \mathbb{R}^n$, momentum parameter $\tau > 0$, an auxiliary function $\phi:\Rn \to \bb{R}$,  and a mixing matrix ${\bm W}$. 
\For{all $i\in [d]$ in parallel}
\State Set $k \gets 0$. Initialize ${\bm x}_{i,k}={\bm x}_0, {\bm y}_{i,k}={\bm y}_0$; 
\While{not terminated}
    \State Choose the step-size $\eta_{k}$;
    \State Communicate and update the local variable 
    $${\bm x}_{i, k+1} \in \sum_{j\in \mathcal{N}_{i}}{\bm W}(i,j) {\bm x}_{j, k} - \eta_{k} \partial \phi({\bm y}_{i, k}) ;$$
    \State Randomly select a mini-batch $\mathcal{B}_{i,k+1} \subseteq \mathcal{S}_{i}$;
    \State Compute ${\bm d}_{i,k+1} \in \frac{1}{|\mathcal{B}_{i,k+1}|} \sum_{{\bm s}_l \in \mathcal{B}_{i,k+1}}  D_{F_{i,l}}({\bm x}_{i,k+1})$;
    \State Communicate and update local descent direction 
    $${\bm y}_{i, k+1} = (1-\tau \eta_{k}) \sum_{j\in \mathcal{N}_{i}} {\bm W}(i,j) {\bm y}_{j,k}     + \tau \eta_{k} {\bm d}_{i,k+1};$$
    \State Set $k \gets k + 1$;
\EndWhile
\EndFor
\State \Return ${\bm X}_k:= [{\bm x}_{1,k}, \ldots, {\bm x}_{d,k}]$;
\end{algorithmic}
\end{algorithm}

\begin{rmk}
    The auxiliary function $\phi$ in Algorithm \ref{alg:DSGD-M} determines how the update directions of the variables $\{{\bm x}_k\}$ are regularized. Hence, Algorithm \ref{alg:DSGD-M} yields different variants of DSGD-type methods with different choices of the auxiliary function $\phi$. For example, when we choose $\phi({\bm y}) = \frac{1}{2}\norm{{\bm y}}^2$, Algorithm \ref{alg:DSGD-M} corresponds to DSGD with momentum; when we select $\phi({\bm y}) = \norm{{\bm y}}_1$, Algorithm \ref{alg:DSGD-M} employs the sign map to regularize the update directions of the variables $\{{\bm x}_k\}$, which can be viewed as a decentralized variant of SignSGD analyzed in \cite{bernstein2018signsgd}, and denoted as DSignSGD.  
    Interested readers could refer to \cite{xiao2023convergence} for more details on the choices of the auxiliary function $\phi$. 
    
    { Furthermore, recognizing signSGD as a limiting variant of Adam \cite{chen2023symbolic}, we extend our analysis in Appendix \ref{appendix:adam} to demonstrate how Adam-type methods can be adapted to our framework, hence establishing convergence guarantees for these methods under the nonsmooth nonconvex settings.}
\end{rmk}

Let ${\bm Y}_{k}:= [{\bm y}_{1,k}, \ldots, {\bm y}_{d,k}]$ at $k$-th iteration in Algorithm \ref{alg:DSGD-M}. The convergence properties of this algorithm are established in Theorem \ref{thm:DSGD-M}, within which we concern about two different noise settings and step-sizes in Assumption \ref{Assumption_app1} and \ref{Assumption_app2}.

\begin{theo}\label{thm:DSGD-M}
Suppose Assumption \ref{Assumption_obj} holds. $\{{\bm X}_k\}$ is generated by Algorithm \ref{alg:DSGD-M}, and auxiliary function $\phi$ is convex and admits a unique minimizer at ${\bm 0}$. 
    \begin{enumerate}[label={(\arabic*)}]
        \item When Assumption \ref{Assumption_app1} holds, there exists $\alpha_{c}>0$, for any $c\in (0, \alpha_{c})$, it holds that almost surely, any cluster point of $\{{\bm X}_{k}\}$ is a $\D_f$-critical point of \eqref{Prob_DOP} and $\{f({\bm x}_{i,k}): k \in \bb{N}\}$ converges for any $i \in [d]$.
        \item When Assumption \ref{Assumption_app2} holds, for any $\varepsilon>0$, there exists $\alpha_{c}>0$, for any $c\in (0, \alpha_{c})$, 
        \begin{equation*}
            \mathbb{P}\left(\lim_{k \to \infty}\mathrm{dist}({\bm X}_{k}, \{{\bm X}\in \mathbb{R}^{n\times d}: {\bm X}={\bm x}{\bm 1}^{\top}, {\bm 0} \in D_f({\bm x})\}) =0  \right)\geq 1-\varepsilon
        \end{equation*}
and 
\begin{equation*}
\mathbb{P}(f({\bm x}_{i,k}) \textit { converges })\geq 1-\varepsilon, \textit{ }\forall i \in [d].
\end{equation*}
    \end{enumerate}
\end{theo}
\begin{proof}
    From Line 5 and Line 8 of Algorithm \ref{alg:DSGD-M}, we can conclude that the sequences $\{{\bm X}_k\}$ and $\{{\bm Y}_k\}$ follow the update scheme,
    \begin{equation}
        \label{Eq_Thm_DSGD-M}
        \begin{aligned}
            {\bm X}_{k+1} \in{}&  {\bm X}_k {\bm W} - \eta_k {\bm T}_k, ~ {\bm T}_k \in [\partial \phi({\bm y}_{1,k}), \ldots, \partial \phi({\bm y}_{d,k})], \\ 
            {\bm D}_{k+1} ={}& [{\bm d}_{1, k+1}, \ldots, {\bm d}_{d, k+1}],\\
            {\bm Y}_{k+1} ={}& (1-\tau\eta_{k}) {\bm Y}_{k}{\bm W}+ \tau \eta_{k} {\bm D}_{k+1}.
        \end{aligned}
    \end{equation}
    Let the filtration $\ca{F}_k := \sigma(\{{\bm X}_{l}, ~ {\bm Y}_{l} : l\leq k\})$. Then by setting 
\begin{equation*}
{\bm Z}_k := \left[\begin{smallmatrix}
        {\bm X}_{k}\\
        {\bm Y}_{k}
    \end{smallmatrix} \right], {\bm H}_k := \bb{E}\left[\left[\begin{smallmatrix}
        {\bm T}_k\\
        \tau {\bm Y}_{k}{\bm W}- \tau {\bm D}_{k+1}
    \end{smallmatrix} \right] |\ca{F}_k\right], \text{ and } \Xi_{k+1} := \left[\begin{smallmatrix}
        {\bm T}_k\\
        \tau {\bm Y}_{k}{\bm W}- \tau {\bm D}_{k+1}
    \end{smallmatrix} \right] - {\bm H}_k,
\end{equation*}
we can conclude that the update scheme \eqref{Eq_Thm_DSGD-M} can be reshaped as ${\bm Z}_{k+1} = {\bm Z}_k {\bm W} - \eta_k ({\bm H}_k + \Xi_{k+1})$, which aligns with the update format of \eqref{Eq_Framework}.

    Now we check the validity of \eqref{relation_H_PHI} and Assumption \ref{Assumption_framework}, and then show  Assumption \ref{asp:reshuffling} and \ref{Assumption_stochastic} are satisfied for case (1) and (2), respectively. From the local boundedness of $\D_{f_i}$, we can conclude that $\{{\bm D}_k\}$ is uniformly bounded almost surely whenever $\{{\bm X}_{k}\}$ is bounded. Together with the update scheme of ${\bm Y}_k$, we can conclude that $\norm{{\bm Y}_{k+1}} \leq (1-\tau\eta_k)\norm{{\bm Y}_k} + \tau \eta_k\norm{{\bm D}_{k+1}}$, hence there exists a constant $M_Y> 0$ such that $\sup_{k\geq 0} \norm{{\bm Y}_k} \leq \max\{\norm{{\bm Y}_0}, \sup_{k\geq 0}\norm{{\bm D}_k}\} < M_Y$, if $\{{\bm X}_{k}\}$ is bounded. Then combining with local boundedness of $\partial \phi$, it can be referred that $\{{\bm T}_{k}\}$ is uniformly bounded almost surely whenever $\{{\bm X}_{k}\}$ is bounded. This  
    illustrates that Assumption \ref{Assumption_framework}(2) holds. 
 
    Moreover, we set $\Phi_i({\bm x}, {\bm y}) := 
        \left[\begin{smallmatrix}
        \partial \phi({\bm y})\\
        \tau {\bm y} - \tau \D_{f_i}({\bm x})
    \end{smallmatrix} \right]$, then  \eqref{relation_H_PHI} holds automatically for case (1) with sufficiently large $\delta_{k}$. For case (2), we define $\delta_k$ as an exponential moving average of $\{\eta_{j}\|{\bm T}_{j}\|\}_{j\leq k}$, expressed as 
\begin{equation*}
\delta_k := \eta_{k}\|{\bm T}_{k}\| +2\sum_{j=0}^{k-1} \lambda_2^{k-1-j}\eta_{j} \|{\bm T}_{j}\|.
\end{equation*}
From this, we can deduce that $\frac{1}{d}{\bm H}_k {\bm 1}_d \in \frac{1}{d} \sum_{i = 1}^d \Phi^{\delta_k}_i({\bm x}_{i,k}, {\bm y}_{i,k})$ for case (2). 
    
Furthermore,  from the definition of $\Phi_i$ and \eqref{relation_H_PHI}, we attain that $\Phi({\bm x},{\bm y}) = \left[\begin{smallmatrix}
        \partial \phi({\bm y})\\
        \tau {\bm y} - \tau \D_{f}({\bm x})
    \end{smallmatrix} \right]$. 
Then together with \cite[Proposition 4.5]{xiao2023convergence} and the path-differentiability of $f$, we can conclude that $\psi({\bm x}, {\bm y}) = f({\bm x}) + \frac{1}{\tau}\phi({\bm y})$ is the Lyapunov function of the differential inclusion $\left( \frac{\mathrm{d}{\bm x}}{\mathrm{d}t}, \frac{\mathrm{d}{\bm y}}{\mathrm{d}t}  \right) \in -\Phi({\bm x},{\bm y})$, and admits $\{({\bm x}, {\bm y}) \in \Rn \times \Rn: {\bm 0}\in \D_f({\bm x}), {\bm y} = {\bm 0}\}$ as its stable set. Leveraging the definability of $f$ and $\D_f$, it follows that the set   $\{\psi({\bm x}, {\bm y}) \in \Rn \times \Rn: {\bm 0}\in \D_f({\bm x}), {\bm y} = {\bm 0}\}$ is finite in $\bb{R}$. Therefore, Assumption \ref{Assumption_framework} holds for Algorithm \ref{alg:DSGD-M}. 
    
We set $\mathcal{U}_{i,j}({\bm x}, {\bm y}):= \left[\begin{smallmatrix} 
\partial \phi({\bm y}) \\
\tau {\bm y}- \tau \frac{1}{|\mathcal{B}_{i,j}|} \sum_{{\bm s}_l\in \mathcal{B}_{i,j}}D_{F_{i,l}}({\bm x})
\end{smallmatrix} \right]$, where $\mathcal{B}_{i,j}$ for $0\leq j \leq N-1$ are subsets of equal size, and set $\rho_k := \eta_{k}\|{\bm T}_{k}\| +2\sum_{j=0}^{k-1} \lambda_2^{k-1-j}\eta_{j} \|{\bm T}_{j}\|$. Then \eqref{Eq_Thm_DSGD-M} is a special form of \eqref{Eq_Framework} with \eqref{eq:reshuf}. Moreover, Assumption \ref{Assumption_app1} implies that Assumption \ref{asp:reshuffling} is satisfied for case (1). With the choice of $\xi_{i, k}:=\{\textit{Draw out } \mathcal{B}_{i,k} \textit{ from agent } i\}$, and 
\begin{equation*}
\chi_{i}({\bm x}, {\bm y}, \xi_{i, k})\in \left[\begin{smallmatrix}
        0\\
        \frac{- \tau}{|\mathcal{B}_{i,k}|} \sum_{{\bm s}_l \in \mathcal{B}_{i,k}}  D_{F_{i,l}}({\bm x}, {\bm y}) -D_{f_i}({\bm x}, {\bm y})
    \end{smallmatrix} \right],
\end{equation*}
\eqref{Eq_Thm_DSGD-M} is a particular form of \eqref{Eq_Framework} with \eqref{eq:chi}, and Assumption \ref{Assumption_stochastic} is satisfied for case (2). Plugging Lemma \ref{lem:2} and Theorem \ref{theo:withreplace_thm1}, we can prove that $\delta_{k} \to 0$, as $k \to \infty$ with arbitrary high probability \footnote{Notice that Theorem \ref{theo:withreplace_thm1} does not require $\delta_k$ to diminish.}. Employing Theorem \ref{theo:reshuff} and \ref{theo:with}, we can derive the results in Theorem \ref{thm:DSGD-M}. 
    
\end{proof}

Theorem \ref{thm:DSGDM2} illustrates how Algorithm \ref{alg:DSGD-M} avoids  spurious critical points, whose proof is presented in Appendix \ref{appendix:1} for simplicity. { We also provide a numerical example in Appendix \ref{appendix:2} to demonstrate that DSGD-m can avoid spurious critical points.}

\begin{theo}\label{thm:DSGDM2}
Suppose Assumption \ref{Assumption_obj} holds and ${\bm W}$ is a non-singular mixing matrix. $\mathcal{X}_0$ is a compact set of $\mathbb{R}^n$,  sequence $\{{\bm X}_{k}\}$ is generated by Algorithm \ref{alg:DSGD-M}, and auxiliary function $\phi$ is convex which admits a unique minimizer at ${\bm 0}.$  
    \begin{enumerate}[label={(\arabic*)}]
        \item When Assumption \ref{Assumption_app1} holds, there exists $\alpha_{c}>0$, a full-measure subset $\mathcal{S}$ of $(0, \alpha_c)$, and a full-measure subset $\mathcal{K}$ of $\mathcal{X}_0\times \mathbb{R}^{n}$, such that for any $c\in \mathcal{S}$ and $({\bm x}_0, {\bm y}_0) \in \mathcal{K}$, it holds that almost surely, any cluster point of $\{{\bm X}_{k}\}$ is a Clarke-critical point of \eqref{Prob_DOP} and $\{f({\bm x}_{i,k}): k \in \bb{N}\}$ converges for any $i \in [d]$. 
        \item When Assumption \ref{Assumption_app2} holds, for any $\varepsilon>0$, there exists $\alpha_{c}>0$, a full-measure subset $\mathcal{S}$ of $(0, \alpha_c)$, and a full-measure subset $\mathcal{K}$ of $\mathcal{X}_0\times \mathbb{R}^{n}$, such that for any $c\in \mathcal{S}$ and $({\bm x}_0, {\bm y}_0) \in \mathcal{K}$, 
        \begin{equation*}
            \mathbb{P}\left(\lim_{k \to \infty}\mathrm{dist}({\bm X}_{k}, \{{\bm X}\in \mathbb{R}^{n\times d}: {\bm X}={\bm x}{\bm 1}^{\top}, {\bm 0} \in \partial f({\bm x})\}) =0  \right)\geq 1-\varepsilon
        \end{equation*}
and 
\begin{equation*}
\mathbb{P}(f({\bm x}_{i,k}) \textit { converges })\geq 1-\varepsilon, \textit{ }\forall i \in [d].
\end{equation*}
    \end{enumerate}

\end{theo}

\subsection{Decentralized SGD with gradient-tracking technique}\label{sec:dsgt}

The decentralized SGD method with gradient-tracking technique (DSGD-T) is another widely employed stochastic 
(sub)gradient-based method in decentralized learning. Algorithm \ref{alg:dsgt} exhibits the detailed implementations of DSGD-T in minimizing nonsmooth path-differentiable functions.  Different from the DSGD in Algorithm \ref{alg:dsgd}, DSGD-T introduces a sequence of auxiliary variables to track the average of local subgradients over all the agents. To the best of our knowledge, all the existing works focus on analyzing the convergence of DSGD-T with Clarke regular objective functions, and the convergence of DSGD-T for nonsmooth functions without Clarke regularity remains unexplored. 

In this subsection, we prove that DSGD-T can be enclosed by our proposed framework \eqref{Eq_Framework} with \eqref{eq:reshuf} or \eqref{eq:chi}. Therefore, based on our theoretical results exhibited in Theorem \ref{theo:reshuff} and \ref{theo:with}, we can establish the global convergence of DSGD-T in the minimization of nonsmooth path-differentiable functions.

\begin{algorithm}
\caption{DSGD-T for solving \eqref{Prob_DOP}}
\label{alg:dsgt}
\begin{algorithmic}[1] 
\Require Initial point ${\bm x}_{0}\in \mathbb{R}^n$, initial descent direction ${\bm V}_{0}=[{\bm v}_{1,0}, \ldots, {\bm v}_{d,0}]$ satisfying ${\bm V}_{0}{\bm 1}_{d} = \sum_{i=1}^{d} {\bm d}_{i, 0} $, where ${\bm d}_{i,0} \in \frac{1}{|\mathcal{B}_{i,0}|} \sum_{{\bm s}_l \in \mathcal{B}_{i,0}}  D_{F_{i,l}}({\bm x}_{i,0}),$ $\mathcal{B}_{i,0}$ is a mini-batch of $\mathcal{S}_{i}$, and a mixing matrix ${\bm W}$. 
\For{all $i\in [d]$ in parallel}
\State Set $k \gets 0$. Initialize ${\bm x}_{i,k}={\bm x}_0$; 
\While{not terminated}
    \State Choose the step-size $\eta_k$;
    \State Communicate and update the local variable 
    $${\bm x}_{i,k+1} = \sum_{j\in \mathcal{N}_{i}}{\bm W}(i,j) {\bm x}_{j,k} - \eta_{k} {\bm v}_{i,k}; $$
    \State Randomly select a mini-batch $\mathcal{B}_{i, k+1} \subseteq \mathcal{S}_{i}$;
    \State Compute ${\bm d}_{i,k+1} \in \frac{1}{|\mathcal{B}_{i,k+1}|} \sum_{{\bm s}_l \in \mathcal{B}_{i,k+1}}  D_{F_{i,l}}({\bm x}_{i, k+1})$;
    \State Communicate and update local descent direction 
    $${\bm v}_{i,k+1} =  \sum_{j\in \mathcal{N}_{i}}{\bm W}(i,j) ({\bm v}_{j,k} + {\bm d}_{j,k+1}  - {\bm d}_{j,k});$$
    \State Set $k \gets k + 1$;
\EndWhile
\EndFor
\State \Return ${\bm X}_k:= [{\bm x}_{1,k}, \ldots, {\bm x}_{d,k}]$;
\end{algorithmic}
\end{algorithm}

In the rest of this subsection, we denote  ${\bm V}_{k} = [{\bm v}_{1,k},$ $ \ldots, {\bm v}_{d,k}]$ at $k$-th iteration in Algorithm \ref{alg:dsgt}. We condense the assertion that DSGD-T can fit in our framework \eqref{Eq_Framework} with different noise settings, along with convergence analysis, into a single theorem.

\begin{theo}\label{thm:dsgt}
Suppose Assumption \ref{Assumption_obj} holds. Let $\{{\bm X}_k\}$ be the sequence generated by Algorithm \ref{alg:dsgt}. 
    \begin{enumerate}[label={(\arabic*)}]
        \item When Assumption \ref{Assumption_app1} holds, there exists $\alpha_{c}>0$, for any $c\in (0, \alpha_{c})$, it holds that almost surely, any cluster point of $\{{\bm X}_{k}\}$ is a $\D_f$-critical point of \eqref{Prob_DOP} and $\{f({\bm x}_{i,k}): k \in \bb{N}\}$ converges for any $i \in [d]$.
        \item When Assumption \ref{Assumption_app2} holds, for any $\varepsilon>0$, there exists $\alpha_{c}>0$, for any $c\in (0, \alpha_{c})$, 
        \begin{equation*}
            \mathbb{P}\left(\lim_{k \to \infty}\mathrm{dist}({\bm X}_{k}, \{{\bm X}\in \mathbb{R}^{n\times d}: {\bm X}={\bm x}{\bm 1}^{\top}, {\bm 0} \in D_f({\bm x})\}) =0  \right)\geq 1-\varepsilon
        \end{equation*}
and 
\begin{equation*}
\mathbb{P}(f({\bm x}_{i,k}) \textit { converges })\geq 1-\varepsilon, \textit{ }\forall i \in [d].
\end{equation*}
    \end{enumerate}
\end{theo}

\begin{proof}
From Line 5 - Line 8 of Algorithm \ref{alg:dsgt}, the sequences $\{{\bm X}_k\}$ and $\{{\bm V}_k\}$ are updated by the following scheme,  
\begin{equation}\label{eq:dsgt}
\begin{aligned}  
    {\bm X}_{k+1} ={}&  {\bm X}_k {\bm W} - \eta_k {\bm V}_k,  ~ {\bm D}_{k+1} ={}[{\bm d}_{1,k+1}, \ldots, {\bm d}_{d,k+1}],\\
    {\bm V}_{k+1} = {}&  ({\bm V}_{k}  + {\bm D}_{k+1}  - {\bm D}_{k}) {\bm W}. \\
\end{aligned}
\end{equation}

Let the filtration $\ca{F}_k := \sigma(\{{\bm X}_l : l\leq k\})$. By setting ${\bm Z}_k := {\bm X}_k$, ${\bm H}_k := \bb{E}[{\bm V}_k|\ca{F}_k]$ and $\Xi_{k+1} := {\bm V}_k - {\bm H}_k$, we can conclude that \eqref{eq:dsgt} coincides with the framework \eqref{Eq_Framework}. 

Then we aim to check the validity of Assumption \ref{Assumption_framework}. Due to the local boundedness of any conservative field, there exists $M_D>0$ such that $\sup_{k\geq 0} \norm{{\bm D}_k} \leq M_D$ almost surely when $\{{\bm X}_k\}$ is restricted in some bounded set. Then by the update scheme of $\{{\bm V}_k\}$ in \eqref{eq:dsgt}, it holds that  
\begin{equation*}
    {\bm V}_{k+1}  =  {\bm D}_{k+1}{\bm W} +  \sum_{i=1}^{k} {\bm D}_{i}({\bm W} - {\bm I}_d){\bm W}^{k+1-i}   - {\bm D}_{0}{\bm W}^{k+1} + {\bm V}_{0}{\bm W}^{k+1}.
\end{equation*}
Then from Corollary \ref{Prop_preliminary_mixing_matrix}, there exists a constant $\alpha \in (0,1)$, such that $\norm{({\bm W} - {\bm I}_d){\bm W}^{k}} \leq 2\alpha^{k}$ holds for any $k\geq 0$. Therefore, almost surely, it holds that 
\begin{equation*}
    \begin{aligned}
        \sup_{k\geq 0} \norm{{\bm V}_{k}}
       \leq{}& \sup_{k\geq 0} \| {\bm D}_{k+1}{\bm W} +  \sum_{i=1}^{k} {\bm D}_{i}({\bm W} - {\bm I}_d){\bm W}^{k+1-i}   - {\bm D}_{0}{\bm W}^{k+1} + {\bm V}_{0}{\bm W}^{k+1} \|\\
        \leq{}& 3\sup_{k\geq 0} \norm{{\bm D}_k} + \left(\sup_{k\geq 0}  \norm{{\bm D}_{k}}\right) \cdot \sum_{i=1}^{k}\norm{({\bm W} - {\bm I}_d){\bm W}^{k+1-i}}\\
        \leq{}& 3M_D  + \sum_{i = 1}^{\infty}  2M_D\alpha^{i} \leq \left(3+ \frac{2}{1-\alpha}\right) M_D. 
    \end{aligned}
\end{equation*}
Therefore, we can conclude that $\{{\bm V}_k\}$ is uniformly bounded almost surely when $\{{\bm X}_k\}$ is bounded, hence verifying the validity of Assumption \ref{Assumption_framework}(2). 

Assumption \ref{Assumption_framework}(1) follows quickly from Assumption \ref{Assumption_app1} or \ref{Assumption_app2}. \eqref{relation_H_PHI} holds vacuously with sufficiently large $\delta_{k}$ and $\Phi_i := \D_{f_i}$ for case (1). Moreover for case (2), followed by $\mathbb{E}[{\bm d}_{i,k}|\mathcal{F}_{k}] \in \mathbb{E}_{\mathcal{B}_{i, k}}[\frac{1}{|\mathcal{B}_{i,k}|} \sum_{{\bm s}_l \in \mathcal{B}_{i,k}}  D_{F_{i,l}}({\bm x}_{i,k})]= D_{f_{i}} ({\bm x}_{i,k})$, it is easy to deduce that 
\begin{equation*}
    \frac{1}{d}{\bm H}_k {\bm 1}_d  = \mathbb{E}[\frac{1}{d}{\bm V}_k {\bm 1}_d| \mathcal{F}_{k}] = \mathbb{E}[\frac{1}{d}{\bm D}_k {\bm 1}_d|\mathcal{F}_{k}]\in \frac{1}{d} \sum_{i = 1}^d \D_{f_i}({\bm x}_{i,k}), \forall k\in \bb{N}.
\end{equation*}
Therefore, by choosing $\delta_{k}:=0$ and $\Phi_i := \D_{f_i}$ for all $i \in [d]$, we verify the validity of \eqref{relation_H_PHI}.

Furthermore, according to \cite[Section 6]{bolte2021conservative}, we can assert that  $f$ is the Lyapunov function of the differential inclusion $\frac{\mathrm{d}{\bm x}}{\mathrm{d}t} \in -\D_f({\bm x})$, 
and admits the stable set $\{{\bm x} \in \Rn: {\bm 0} \in \D_f({\bm x})\}$. In addition, the definability of $f$ and $\D_f$ illustrates that $\{f({\bm x}): {\bm 0} \in \D_f({\bm x})\}$ is finite in $\bb{R}$. As a result, we have verified the validity of Assumption \ref{Assumption_framework} with $\Phi := \D_f$, $\psi := f$, and $\ca{A} := \{{\bm x} \in \Rn: {\bm 0} \in \D_f({\bm x})\}$. 

Following the same construction of $\mathcal{U}_{i,k}({\bm x})$, $\rho_k$, $\xi_{i, k}$ and $\chi_{i}$ as in the proof of Theorem \ref{thm:dsgd}, we could verify Assumption \ref{asp:reshuffling} and \ref{Assumption_stochastic} hold for case (1) and case (2), respectively. Henceforth, the update scheme \eqref{eq:dsgt} is a variant  of \eqref{Eq_Framework} with either \eqref{eq:reshuf} or \eqref{eq:chi}. Utilizing our theoretical results in Theorem \ref{theo:reshuff} and \ref{theo:with}, we completes the proof. 

\end{proof}

\section{Numerical Experiments}\label{sec:6}

\begin{figure*}[!t]
\centering
\subfloat[Test Accuracy]{\includegraphics[width=1.8in]{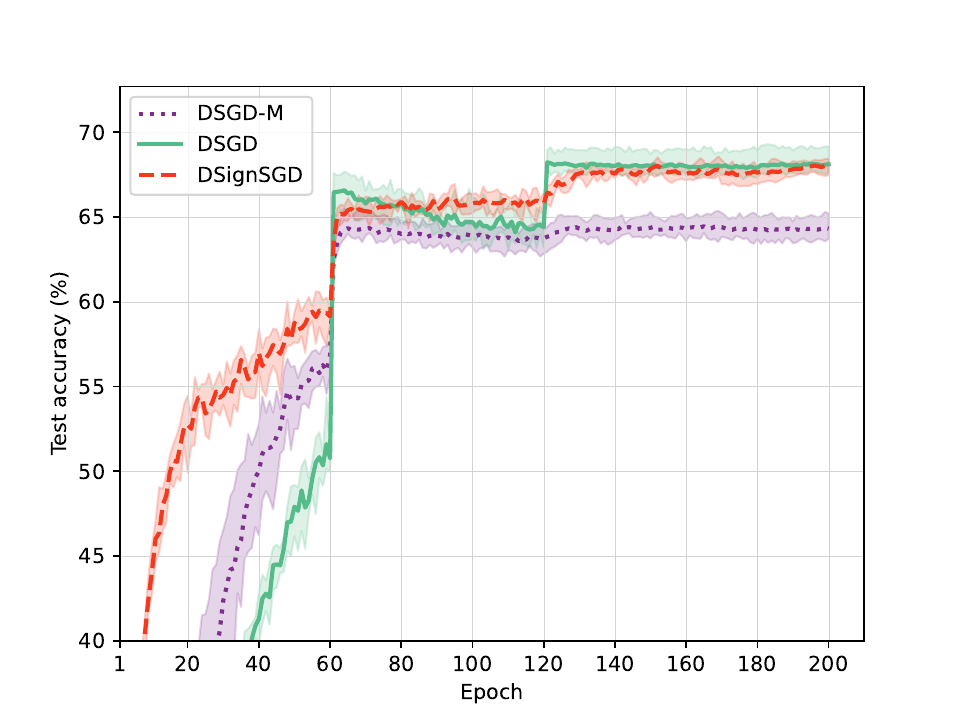}%
\label{fig_1_1}}
\hfil
\subfloat[Train Loss]{\includegraphics[width=1.8in]{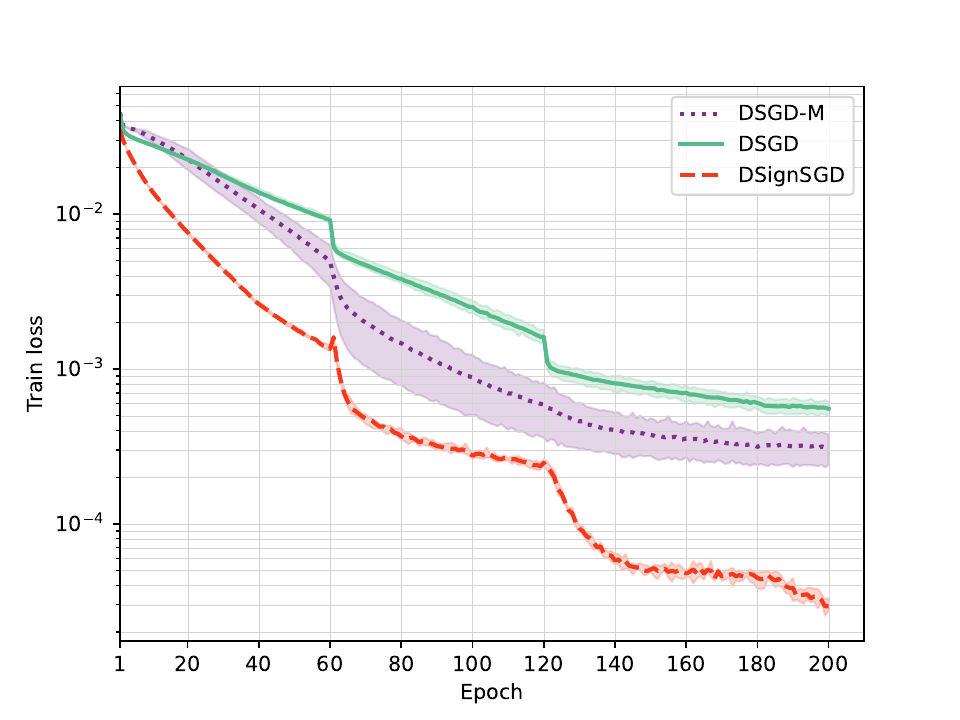}%
\label{fig_1_2}}
\hfil
\subfloat[Consensus Error]{\includegraphics[width=1.8in]{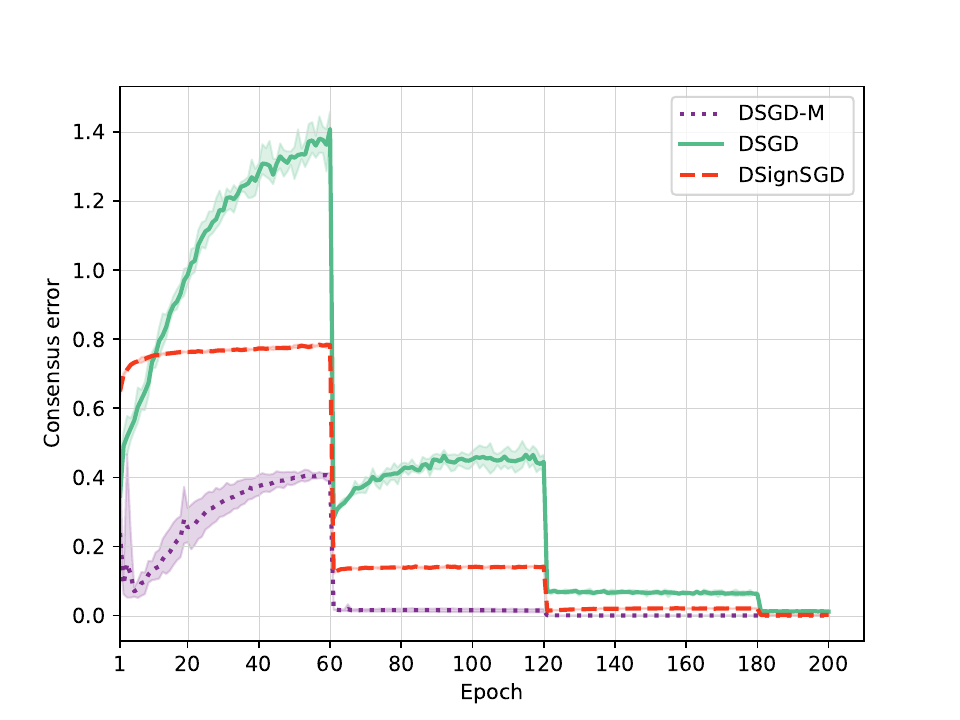}%
\label{fig_1_3}}

\caption{Numerical performance comparison of DSGD, DSGD-M, and DSignSGD in training ResNet50 on CIFAR-100 dataset using random reshuffling strategy.}
\label{fig:dsgd}
\end{figure*}

\begin{figure*}[!t]
\centering
\subfloat[Test Accuracy]{\includegraphics[width=1.8in]{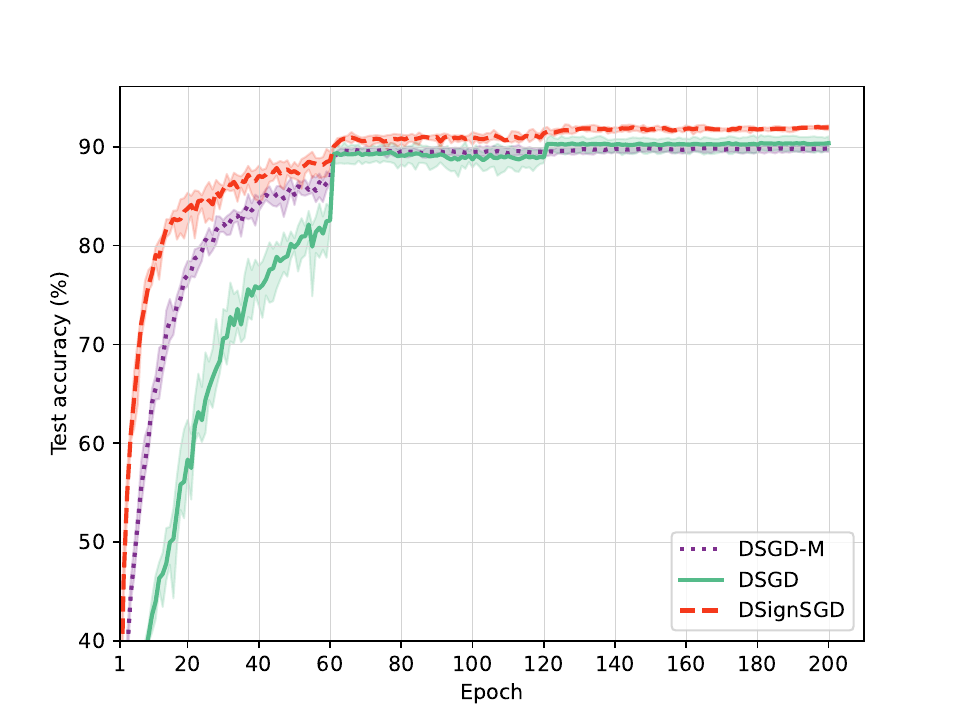}%
\label{fig_2_1}}
\hfil
\subfloat[Train Loss]{\includegraphics[width=1.8in]{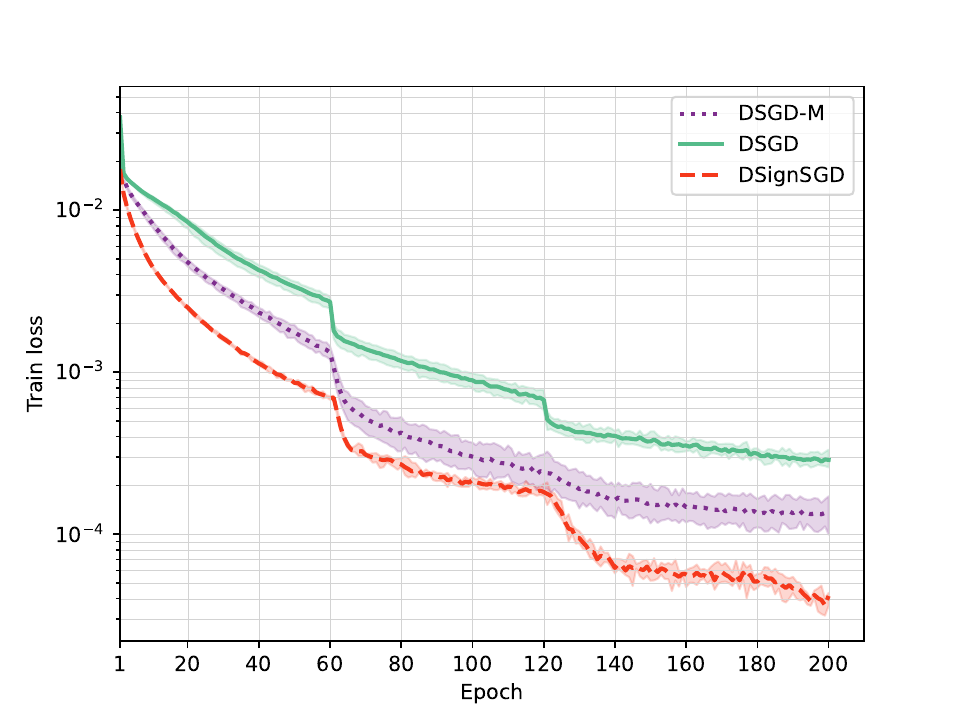}%
\label{fig_2_2}}
\hfil
\subfloat[Consensus Error]{\includegraphics[width=1.8in]{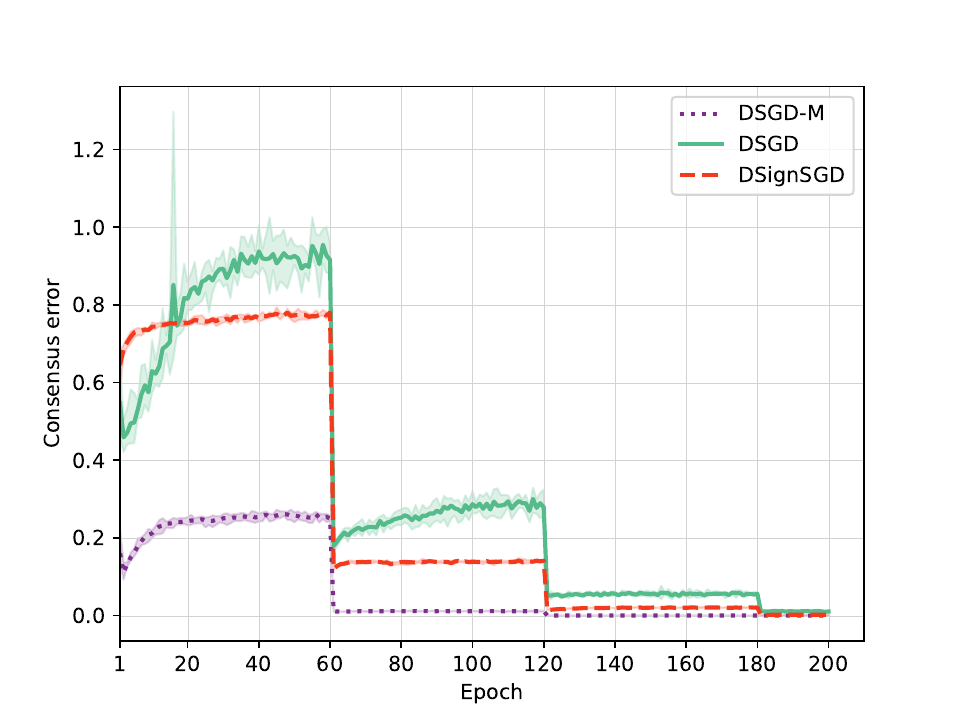}%
\label{fig_2_3}}

\caption{Numerical performance comparison of DSGD, DSGD-M and DSignSGD in training ResNet50 on CIFAR-10 dataset using random reshuffling strategy.}
\label{fig:dsgd2}
\end{figure*}

{

\begin{figure*}[!t]
\centering
\subfloat[Test Accuracy]{\includegraphics[width=1.8in]{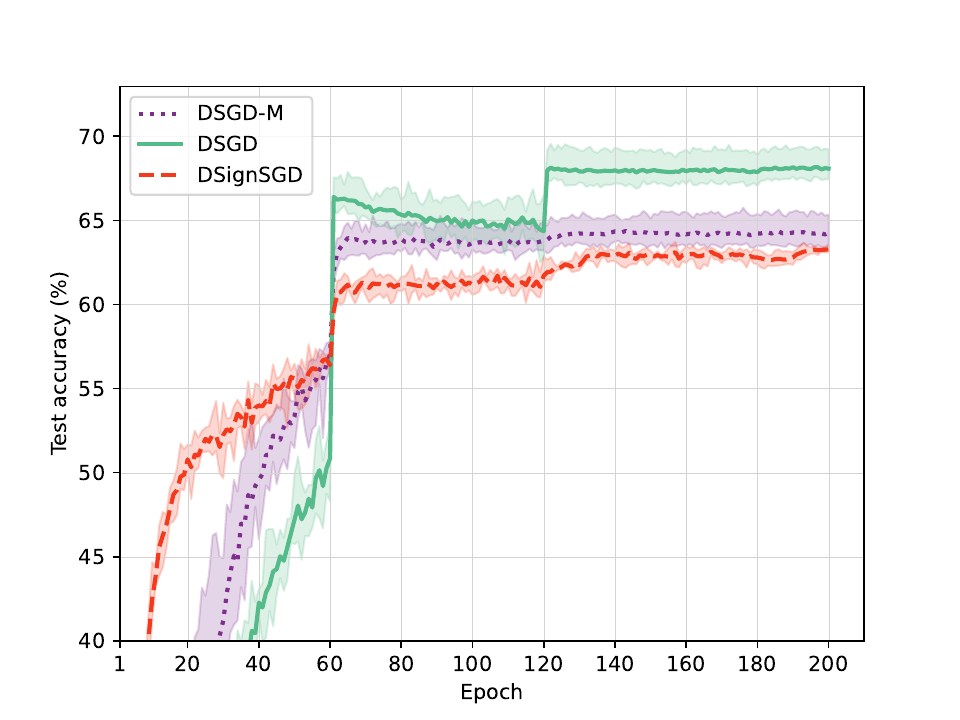}%
\label{fig_7_1}}
\hfil
\subfloat[Train Loss]{\includegraphics[width=1.8in]{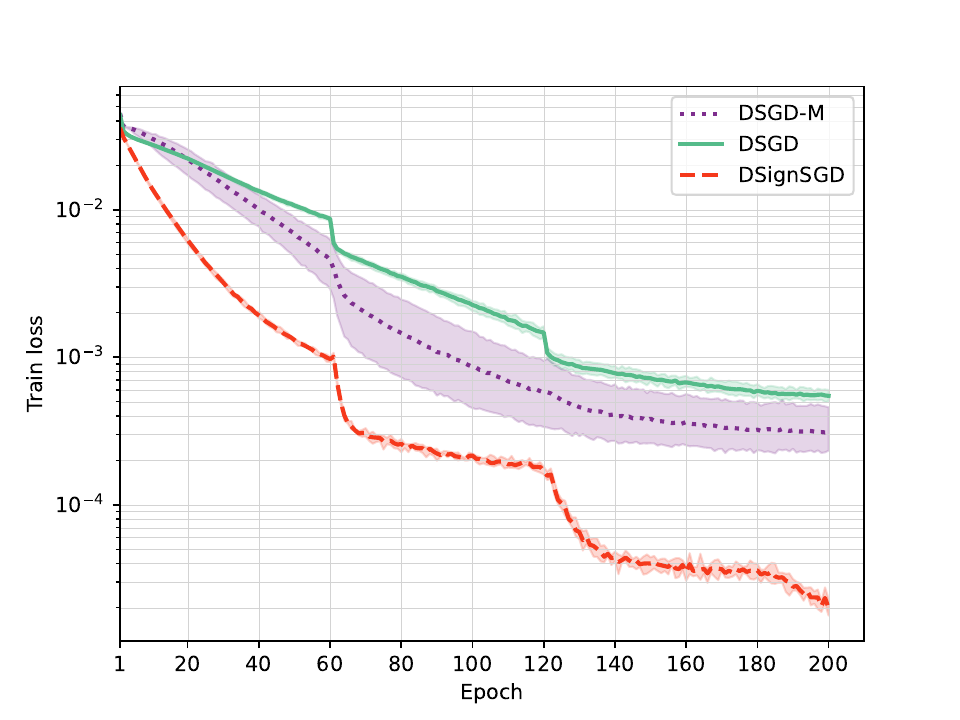}%
\label{fig_7_2}}
\hfil
\subfloat[Consensus Error]{\includegraphics[width=1.8in]{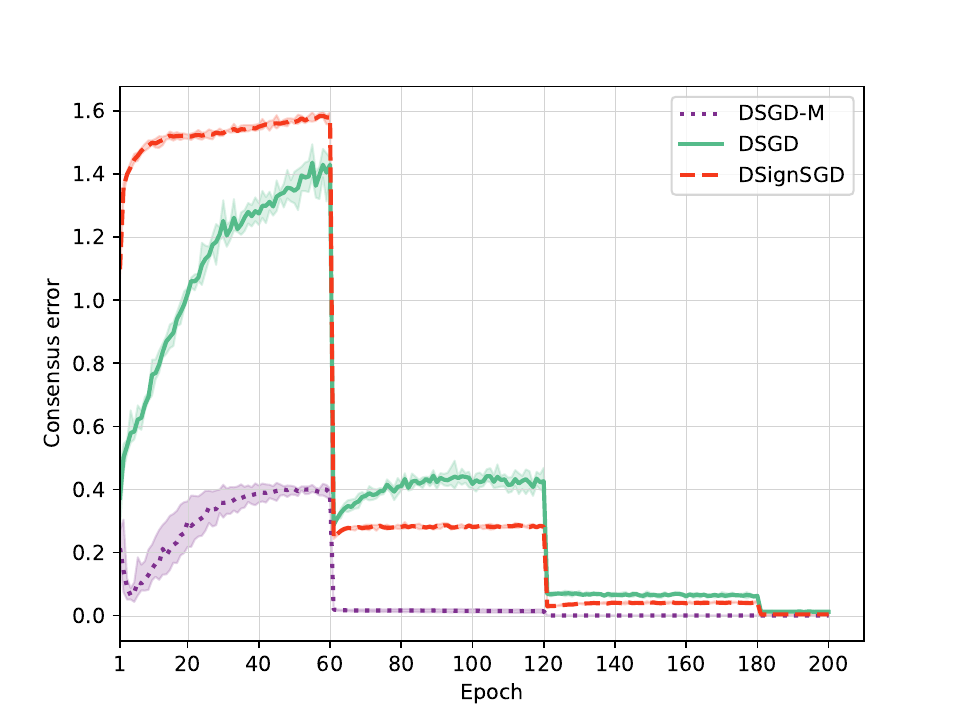}%
\label{fig_7_3}}

\caption{Numerical performance comparison of DSGD, DSGD-M, and DSignSGD in training ResNet50 on CIFAR-100 dataset using with-replacement sampling strategy.}
\label{fig:dsgd3}
\end{figure*}

\begin{figure*}[!t]
\centering
\subfloat[Test Accuracy]{\includegraphics[width=1.8in]{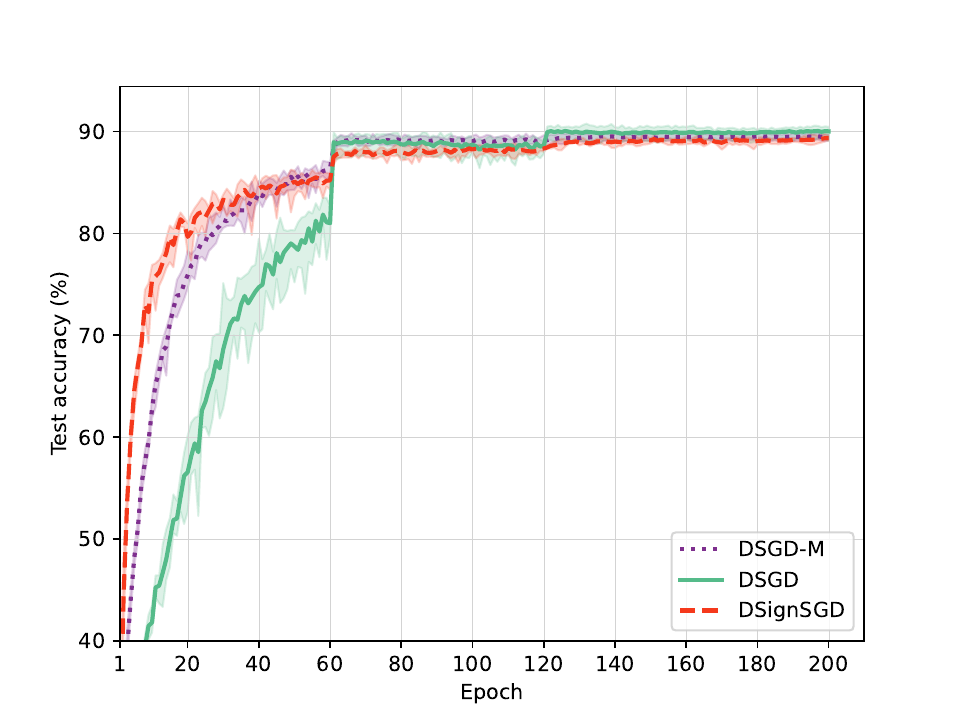}%
\label{fig_6_1}}
\hfil
\subfloat[Train Loss]{\includegraphics[width=1.8in]{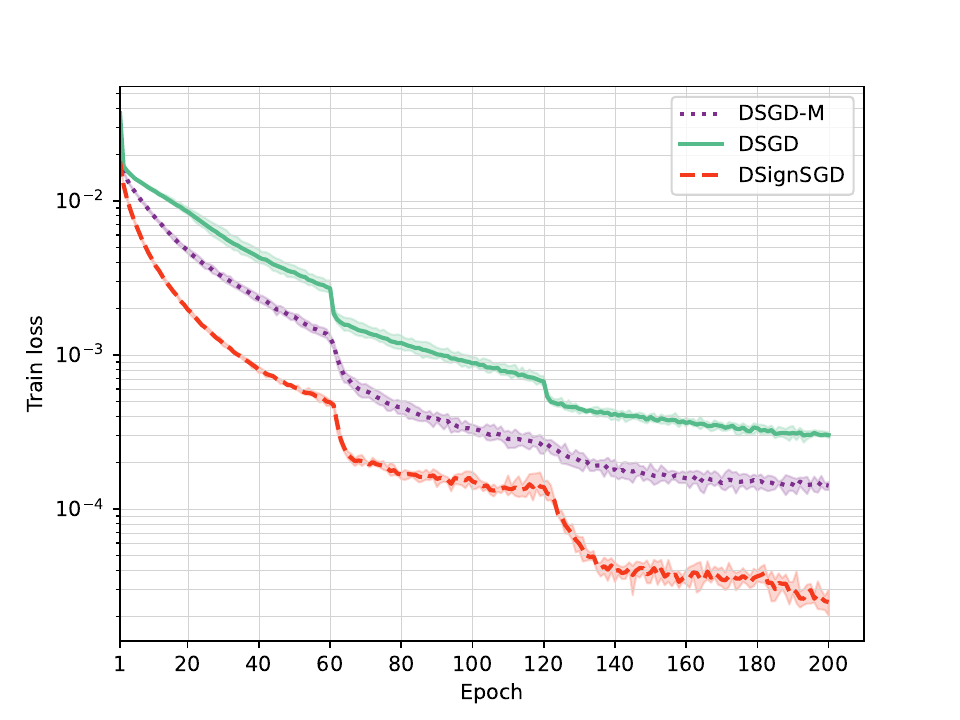}%
\label{fig_6_2}}
\hfil
\subfloat[Consensus Error]{\includegraphics[width=1.8in]{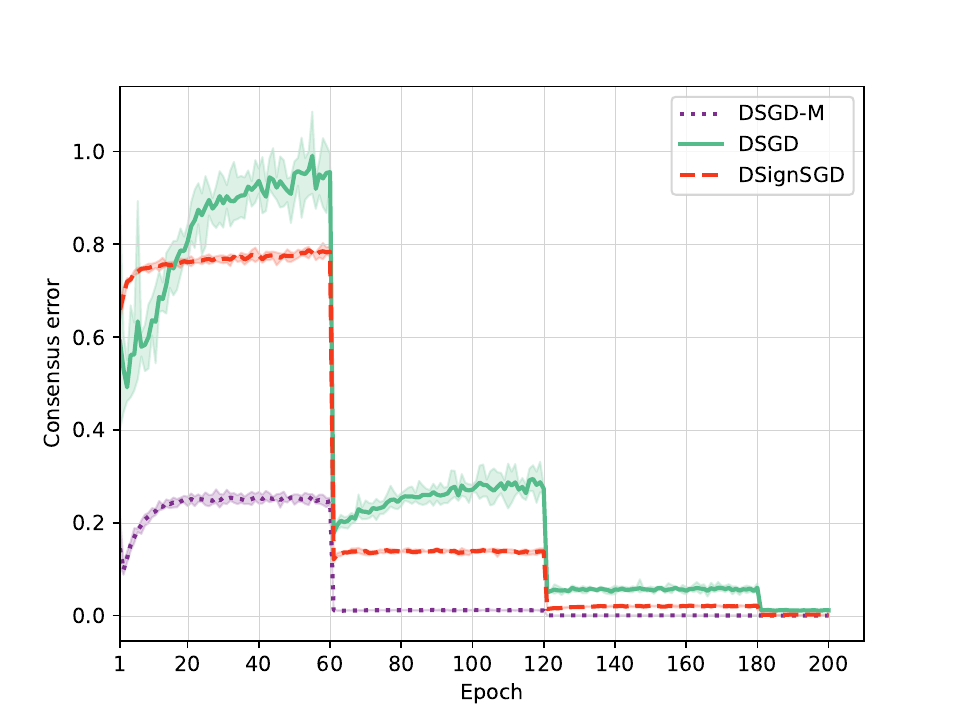}%
\label{fig_6_3}}

\caption{Numerical performance comparison of DSGD, DSGD-M, and DSignSGD in training ResNet50 on CIFAR-10 dataset using with-replacement sampling strategy.}
\label{fig:dsgd4}
\end{figure*}
}

In this section, we evaluate the numerical performance of our analyzed decentralized stochastic subgradient-based methods in Section \ref{sec:4} for decentralized training nonsmooth neural networks. All numerical experiments in this section are conducted on 
a workstation equipped with two Intel(R) Xeon(R) Processors Gold 5317 CPUs (at 3.00GHz, 18M Cache) and 8 NVIDIA GeForce RTX 4090 GPUs under Ubuntu 20.04.1. We implement all decentralized algorithms with Python 3.8 and PyTorch 1.13.0 using NCCL 2.14.3 (CUDA 11.7) as the communication backend. 

\subsection{Experiment settings}
We conduct a series of image classification tasks with CIFAR-10 and CIFAR-100 datasets \cite{krizhevsky2009learning}, which both consist of $50000$ training samples and $10000$ test samples. ResNet-50 \cite{he2016deep} is employed as our nonsmooth neural network. The decentralized network topology is set as ring-structure with $8$ agents, and the mixing matrix is chosen as the Metropolis weight matrix \cite{xiao2006distributed}. In each experiment, we divide the training dataset into 8 equal parts, with each part serving as the local dataset for each agent. During any epoch of training, the local dataset of each agent itself remains unchanged, while the mini-batch is selected by random reshuffling { or with-replacement sampling} to ensure compliance with the sampling scheme in Assumption \ref{Assumption_app1} { or \ref{Assumption_app2}}. 

Moreover, we set the number of epochs as $200$ and choose the batch size as $128$. The strategy for selecting step-size is designed to reduce it three times, specifically at the 60-th, 120-th, and 160-th epochs, with a decay factor of 0.2. 

\subsection{Performance comparison between DSGD, DSGD-M, and DSignSGD}

We first evaluate the numerical performance of DSGD (Algorithm \ref{alg:dsgd}), DSGD-M (Algorithm \ref{alg:DSGD-M} with $\phi = \frac{1}{2}\|{\bm y}\|^{2}$), and 
DSignSGD (Algorithm \ref{alg:DSGD-M} with $\phi=\norm{{\bm y}}_1$). All compared methods are executed five times with varying random seeds.  In each test instance, all these compared methods are initialized with the same
randomly generated initial points. 

In Algorithm \ref{alg:DSGD-M}, we choose the momentum parameter $\tau$ as $\frac{0.1}{\eta_{0}}$, and the initial descent direction is taken as ${\bm Y}_{0}= (1-\tau \eta_{0}) {\bm D}_{0}{\bm W} + \tau \eta_{0} {\bm D}_{0}$. Additionally, we set the initial step-size $\eta_{0}$ of DSGD and DSGD-M to be $0.2$, and the counterpart of DSignSGD to be $10^{-4}$.

We present the numerical results of the above-mentioned decentralized stochastic sub-gradient-based methods in { Figures \ref{fig:dsgd}- \ref{fig:dsgd4}}, including the test accuracy, train loss, and consensus error { under two different sampling strategies.} It is worth mentioning that the consensus error at the $k$-th iteration is measured by $\frac{1}{\sqrt{d}}\|{\bm X}_{k}({\bm I}_{d}-{\bm P})\|_{\mathrm{F}}$. As illustrated in { Figures \ref{fig:dsgd}- \ref{fig:dsgd4}}, all the compared decentralized stochastic subgradient-based methods are able to train the ResNet-50 network to a high accuracy. Moreover, we can conclude from Figure \ref{fig:dsgd}(a) and \ref{fig:dsgd2}(a) that { when we employ the random reshuffling strategy, DSignSGD achieves similar test accuracy as DSGD and better test accuracy compared to DSGD-M, while showing superior performance to DSGD and DSGD-M in the aspect of train loss from Figure \ref{fig:dsgd}(b) and \ref{fig:dsgd2}(b). When using with-replacement sampling, DSGD achieves  higher test accuracy than that of  DSGD-M and DSignSGD, but exhibits the worst training-loss-performance from Figure \ref{fig:dsgd3}- \ref{fig:dsgd4}. }

Furthermore, in all the compared methods, the consensus error diminishes to $0$ throughout the iterations. These results illustrate that our framework is able to yield efficient stochastic subgradient-based methods for decentralized training tasks.

\subsection{Performance comparison between DSGD and DSGD-T}

In this subsection, we evaluate the numerical performance of DSGD (Algorithm \ref{alg:dsgd}) and DSGD-T (Algorithm \ref{alg:dsgt}).
Similar to the settings in Section 5.2, we repeat the numerical experiments for $5$ times with varying random seeds for DSGD and DSGD-T. For each random seed, DSGD and DSGD-T are executed with the same initial point ${\bm x}_0$. The initial step-size is taken as $0.2$ for both algorithms.

The numerical results on the CIFAR-100 and CIFAR-10 datasets are presented in {Figures \ref{fig:dsgt}- \ref{fig:dsgt4}}, respectively. It can be concluded that DSGD-T demonstrates slightly better performance than DSGD in all aspects { under all sampling strategies}, especially in train loss. Therefore, we can conclude that DSGD-T achieves high efficiency in decentralized training tasks while enjoying convergence guarantees from our framework \eqref{Eq_Framework}.

\begin{figure*}[!t]
\centering
\subfloat[Test Accuracy]{\includegraphics[width=1.8in]{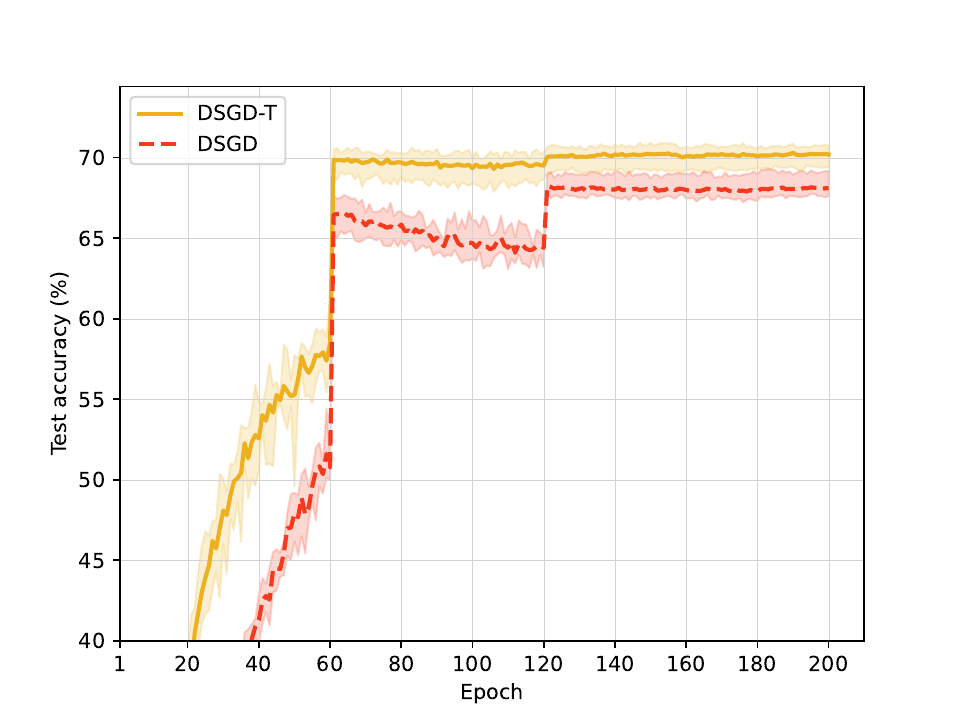}%
\label{fig_3_1}}
\hfil
\subfloat[Train Loss]{\includegraphics[width=1.8in]{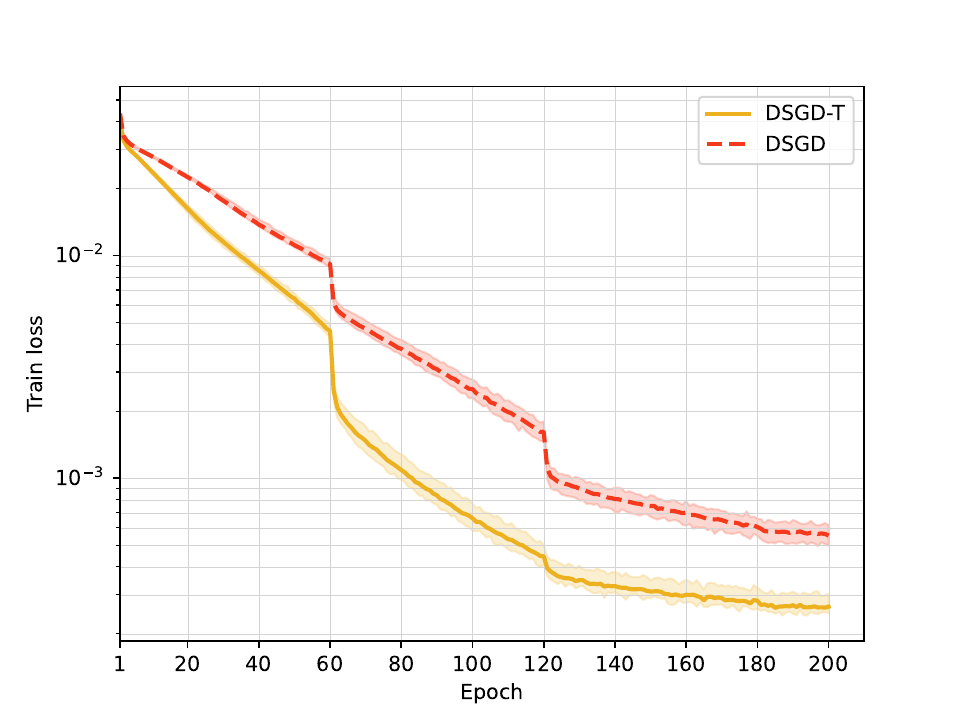}%
\label{fig_3_2}}
\hfil
\subfloat[Consensus Error]{\includegraphics[width=1.8in]{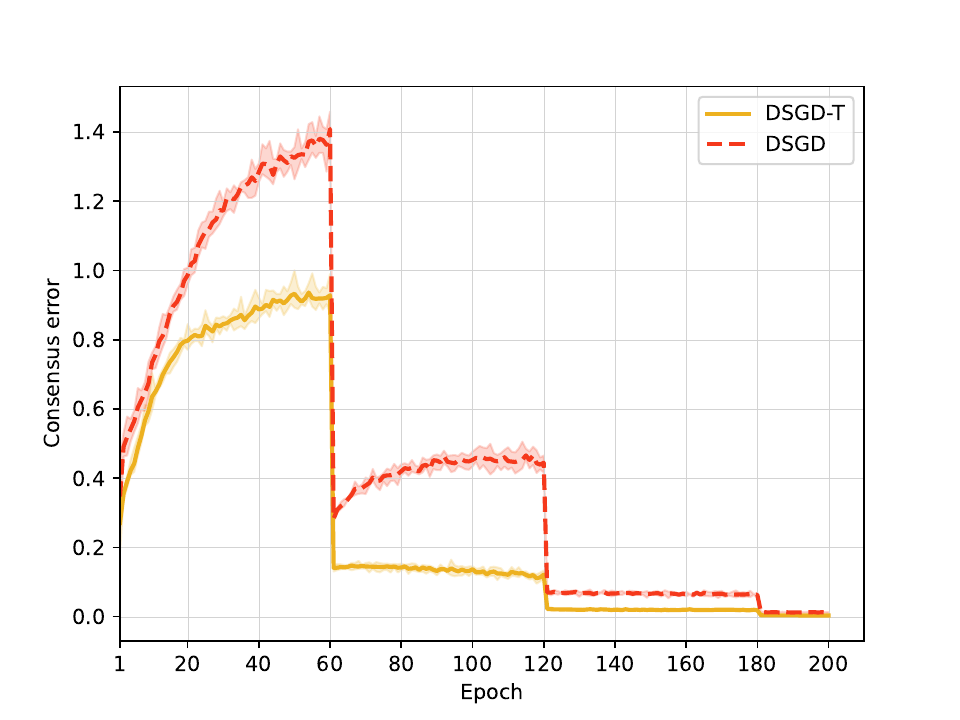}%
\label{fig_3_3}}

\caption{Numerical performance comparison of DSGD and DSGD-T in training ResNet50 on CIFAR-100 dataset using random reshuffling strategy.}
\label{fig:dsgt}
\end{figure*}

\begin{figure*}[!t]
\centering
\subfloat[Test Accuracy]{\includegraphics[width=1.8in]{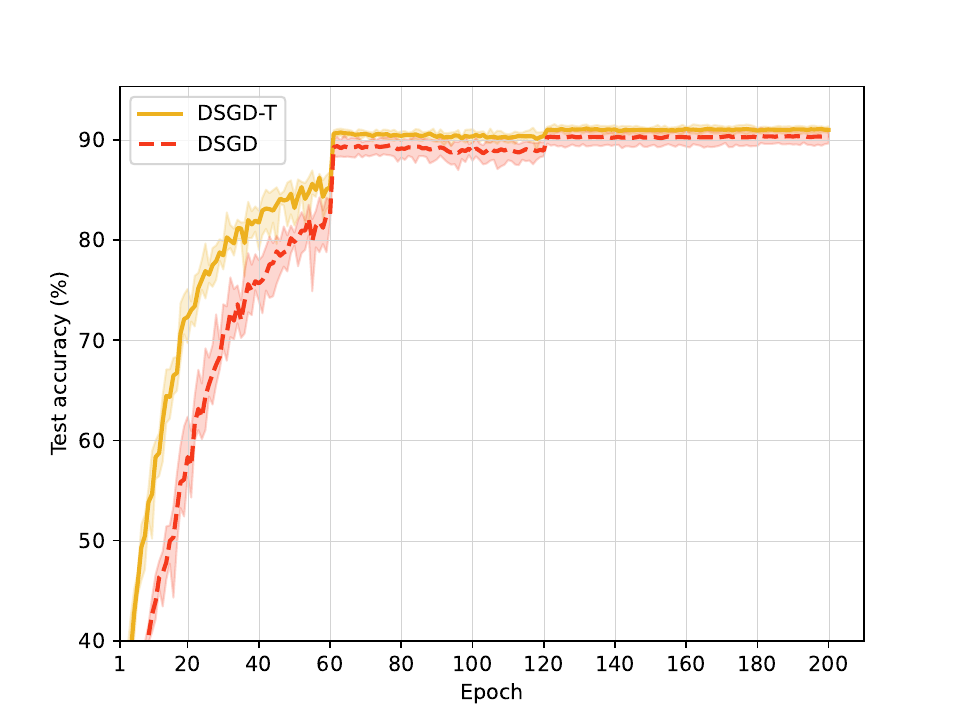}%
\label{fig_4_1}}
\hfil
\subfloat[Train Loss]{\includegraphics[width=1.8in]{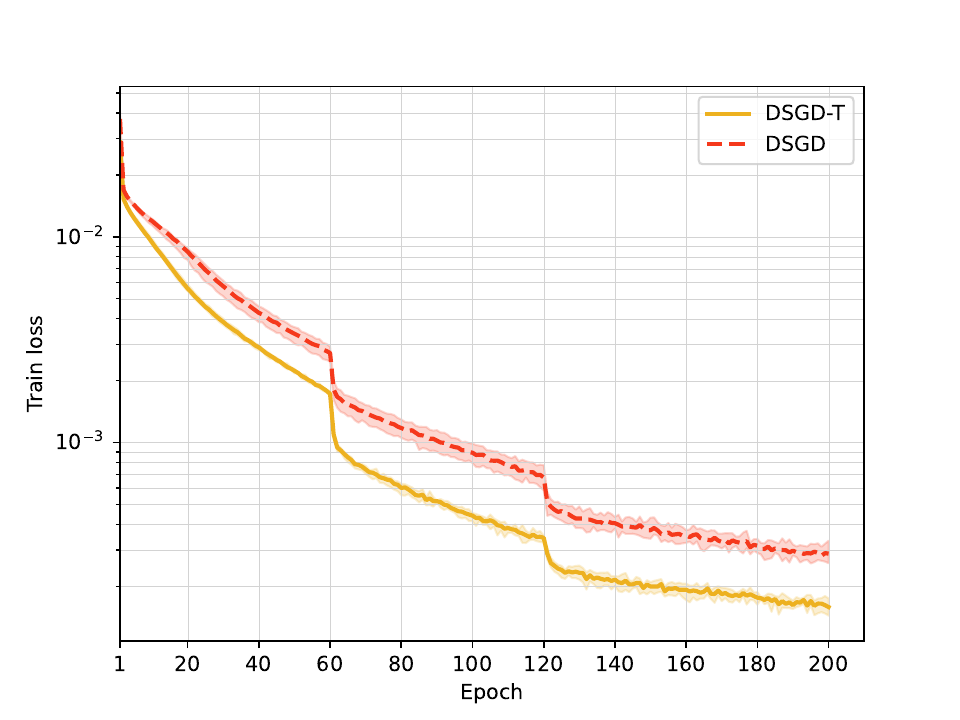}%
\label{fig_4_2}}
\hfil
\subfloat[Consensus Error]{\includegraphics[width=1.8in]{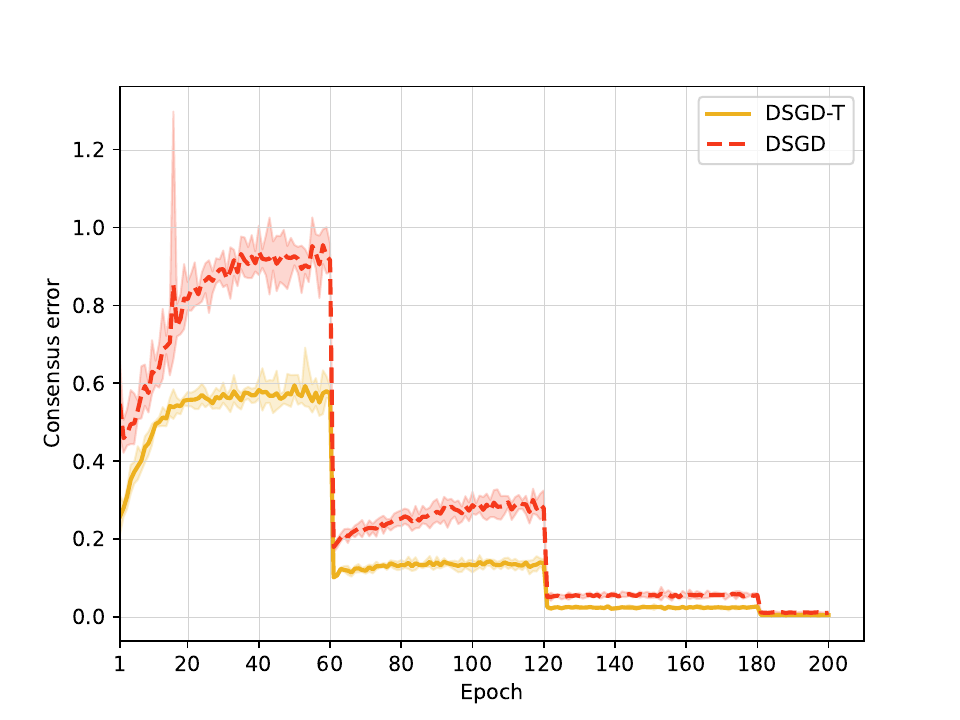}%
\label{fig_4_3}}

\caption{Numerical performance comparison of DSGD and DSGD-T in training ResNet50 on CIFAR-10 dataset using random reshuffling strategy.}
\label{fig:dsgt2}
\end{figure*}

{
\begin{figure*}[!t]
\centering
\subfloat[Test Accuracy]{\includegraphics[width=1.8in]{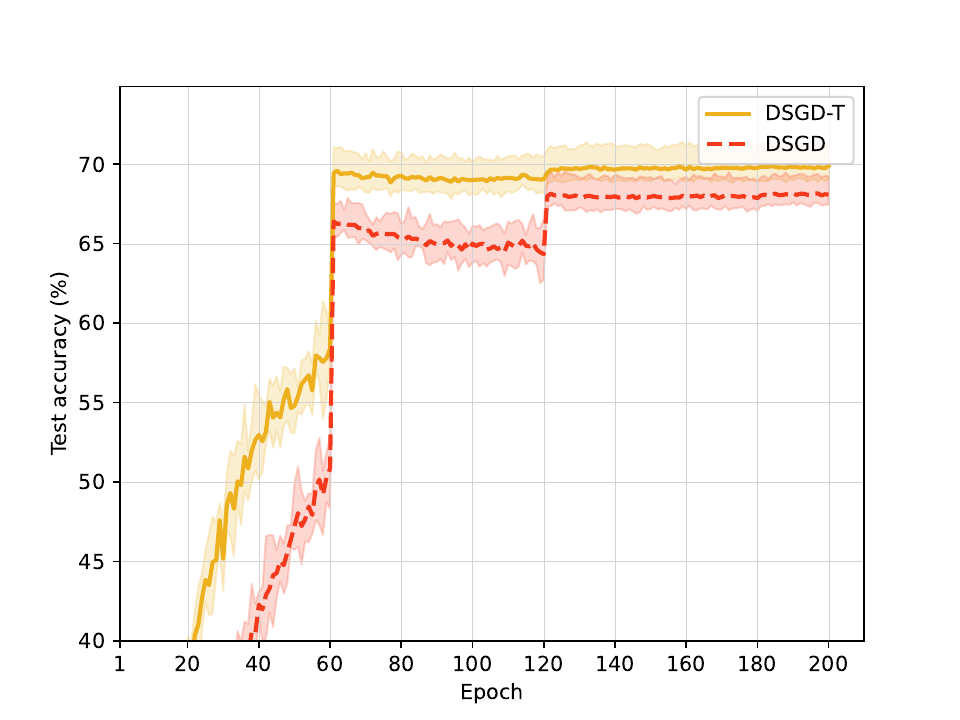}%
\label{fig_8_1}}
\hfil
\subfloat[Train Loss]{\includegraphics[width=1.8in]{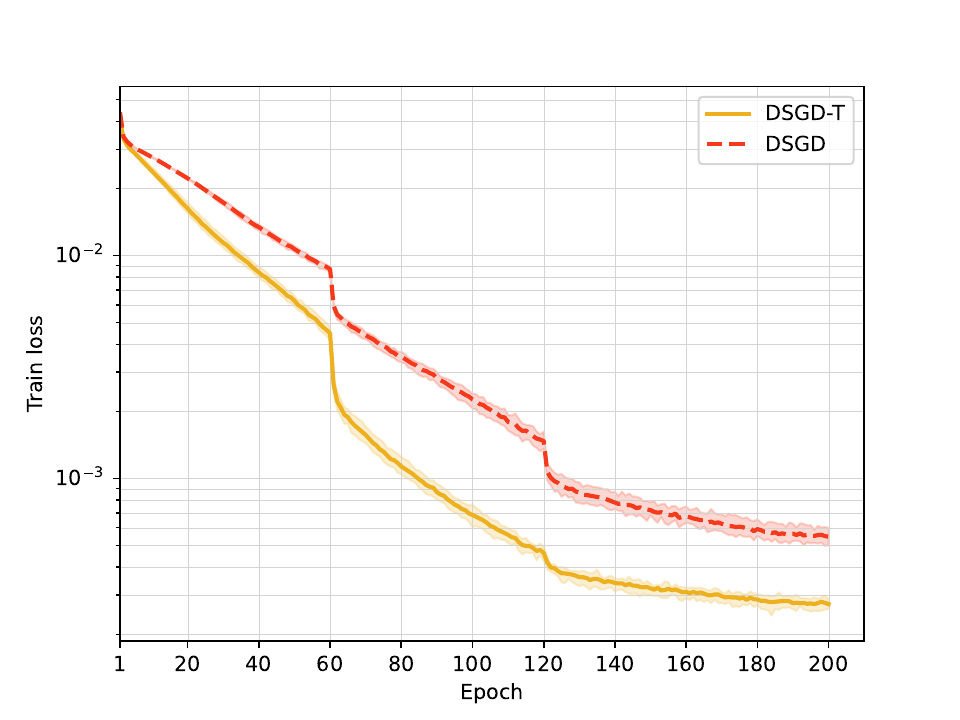}%
\label{fig_8_2}}
\hfil
\subfloat[Consensus Error]{\includegraphics[width=1.8in]{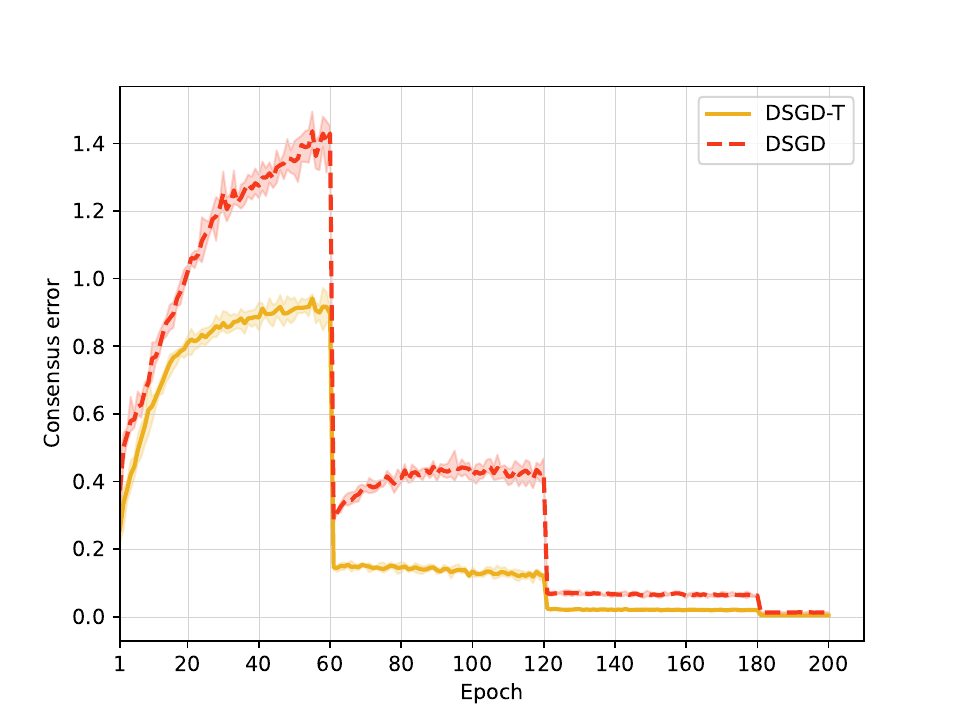}%
\label{fig_8_3}}

\caption{Numerical performance comparison of DSGD and DSGD-T in training ResNet50 on CIFAR-100 dataset using with-replacement sampling strategy.}
\label{fig:dsgt3}
\end{figure*}

\begin{figure*}[!t]
\centering
\subfloat[Test Accuracy]{\includegraphics[width=1.8in]{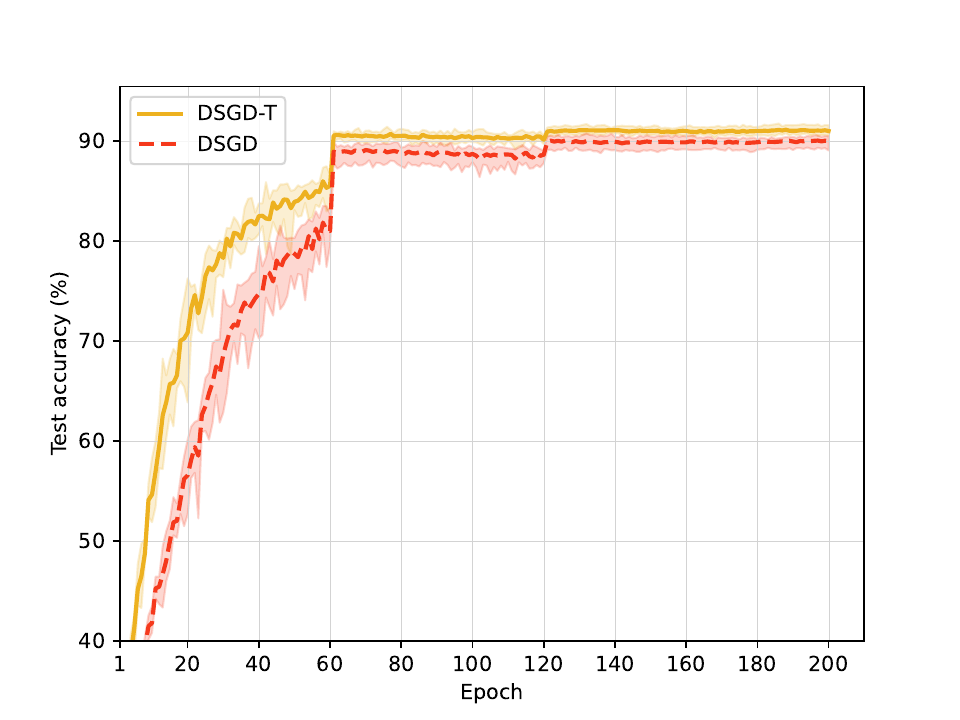}%
\label{fig_9_1}}
\hfil
\subfloat[Train Loss]{\includegraphics[width=1.8in]{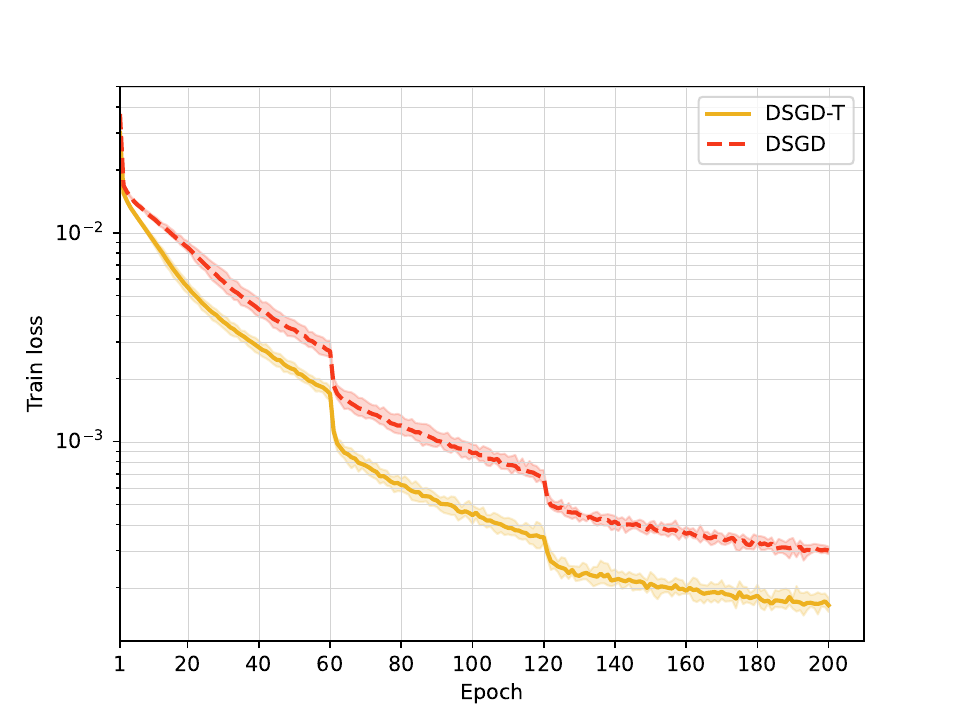}%
\label{fig_9_2}}
\hfil
\subfloat[Consensus Error]{\includegraphics[width=1.8in]{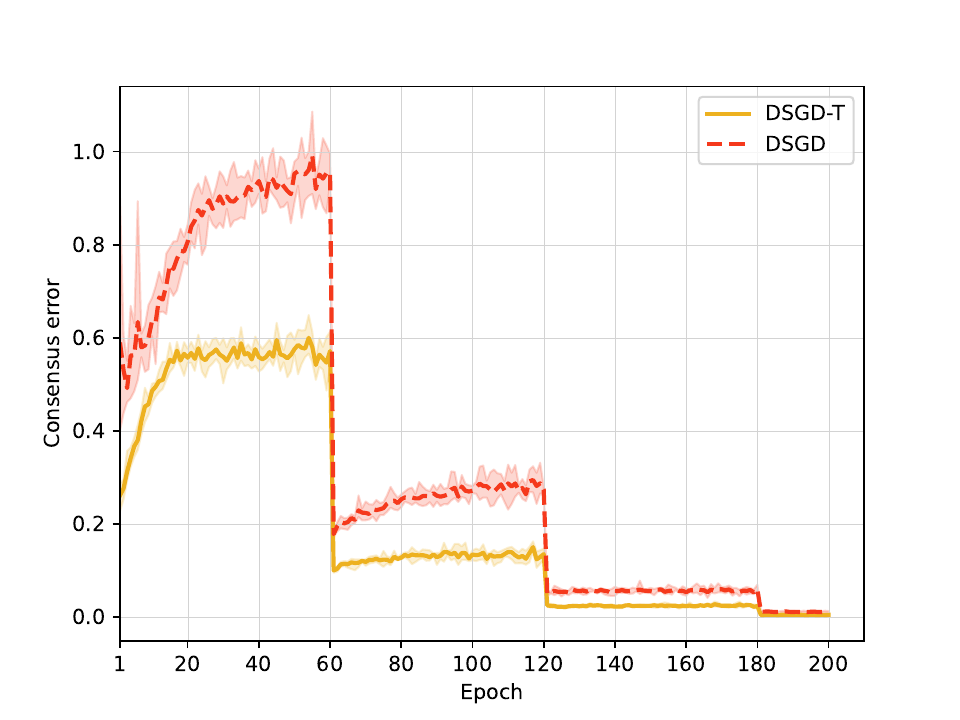}%
\label{fig_9_3}}

\caption{Numerical performance comparison of DSGD and DSGD-T in training ResNet50 on CIFAR-10 dataset using with-replacement sampling strategy.}
\label{fig:dsgt4}
\end{figure*}
}

\section{Conclusion}\label{sec:7}

Decentralized stochastic subgradient-based methods play an important role in the decentralized training of deep neural networks, where the widely employed nonsmooth building blocks result in non-Clarke regular loss functions. As nearly all of the existing works on decentralized optimization focus on the Clarke regular functions, there is a gap between theoretical analysis and implementation in real-world training tasks. Additionally, despite the development of various acceleration techniques to improve the performance of decentralized stochastic subgradient methods, the convergence guarantees of these methods in training tasks are largely absent.

To address these issues, we propose a unified framework  \eqref{Eq_Framework} for analyzing the global convergence of  stochastic subgradient-based methods in nonsmooth decentralized optimization. We conduct consensus analysis and relate the average iterates of  \eqref{Eq_Framework} to the trajectories of its corresponding noiseless differential inclusion. Then, under two different evaluation noise settings, we establish convergence properties of \eqref{Eq_Framework}, in the sense that $\{{\bm z}_{i,k}\}$ asymptotically reaches consensus and converges to the stable set of the corresponding noiseless differential inclusion.

Moreover, we show our proposed framework \eqref{Eq_Framework} is general enough to enclose the implementations of various acceleration techniques for decentralized stochastic subgradient methods, including momentum and gradient-tracking techniques. In particular, we demonstrate that a wide range of decentralized stochastic subgradient-based methods, such as DSGD, DSGD-M, and DSGD-T, together with two common sampling schemes, are all encompassed by  our proposed framework. Therefore, our theoretical results, for the first time, provide the global convergence guarantees for these decentralized stochastic subgradient-based methods in nonsmooth nonconvex optimization. Furthermore, based on our proposed framework, we develop a new method named DSignSGD, where sign-mapping is employed to regularize the update directions. Preliminary numerical results verify the validity of our developed theoretical results and exhibit the high efficiency of these decentralized stochastic subgradient-based methods enclosed in our framework \eqref{Eq_Framework}.

Our work sets several directions for future research in the field of nonsmooth nonconvex decentralized optimization. {  Existing ODE approaches  are limited to establishing asymptotic convergence properties or the convergence rate for the continuous-time trajectories of the corresponding differential inclusion \cite{NEURIPS2022_a9077da4, bolte2025inexact}. Consequently, deriving explicit convergence rates for various nonsmooth decentralized stochastic subgradient methods (such as DSGD, DSGT, and their momentum variants) remains a significant challenge. While our analysis establishes the convergence of DSGT in nonsmooth nonconvex settings, it reveals that DSGT and DSGD are corresponding to the same limiting differential inclusion. Empirically, however, Figures \ref{fig:dsgt}-\ref{fig:dsgt4} demonstrate that DSGT consistently converges faster than DSGD. Bridging this gap between theory and practice, especially in establishing explicit convergence rates that distinguish between DSGT and DSGD in the nonsmooth nonconvex regime, remains a critical direction for future research. }

Further attention is also needed on the convergence of decentralized methods with time-varying networks, as well as those that incorporate asynchronous or compressed communication.  Additionally, we are also eager to explore routes that presume relaxed conditions for convergence analysis of decentralized methods in solving \eqref{Prob_DOP}. For instance, closed-measure approaches \cite{bianchi2021closed,bolte2022long,le2024nonsmooth}  are developed for establishing the convergence properties of stochastic subgradient-based methods, with relaxed conditions on the step-sizes and evaluation noises. How to employ these closed-measure approaches for analyzing decentralized stochastic subgradient-based methods is worthy of investigating.

\section*{Acknowledgement}
The work of Professor Xin Liu was supported in part by the National Key R\&D Program of China (2023YFA1009300), the National Natural Science Foundation of China (12125108, 12226008, 12021001, 12288201), Key Research Program of Frontier Sciences, Chinese Academy of Sciences (ZDBS-LY-7022), and CAS-Croucher Funding Scheme for Joint Laboratories ``CAS AMSS-PolyU Joint Laboratory of Applied Mathematics: Nonlinear Optimization Theory, Algorithms and Applications''.

\newpage

\appendix

\section{Proofs of Theorem \ref{thm:DSGD2} and \ref{thm:DSGDM2}}\label{appendix:1}

In this part, we devote to showing the avoidance of spurious critical points for Algorithm \ref{alg:dsgd} and \ref{alg:DSGD-M}. In other words, these algorithms can avoid points that are critical for a specific conservative field but are not Clarke-critical points. We begin with the definition of the concept of almost everywhere $\ca{C}^1$ set-valued mapping. 

\begin{defin}
	\label{Defin_AS_C1}
	A measurable mapping $q: \bb{R}^{nd} \to \bb{R}^{nd}$ is almost everywhere $\ca{C}^1$ if for almost every ${\bm z} \in \bb{R}^{nd}$, $q$ is locally continuously differentiable in 
	a neighborhood of $z$. 
	
	Moreover, a set-valued mapping $\ca{Q} : \bb{R}^{nd} \rightrightarrows \bb{R}^{nd}$ is almost everywhere $\ca{C}^1$ if there exists an almost everywhere $\ca{C}^1$ mapping  $q:\bb{R}^{nd} \to \bb{R}^{nd}$ such that for almost every ${\bm z} \in \bb{R}^{nd}$, $\ca{Q}({\bm z}) = \{q({\bm z})\}$. 
\end{defin}

\subsection{Proof of Theorem \ref{thm:DSGD2}}\label{sec:app1}

For ease of presentation, we define 
\begin{equation*}
F_{i, \mathcal{B}_{i,k+1}}({\bm x}):= \frac{1}{|\mathcal{B}_{i, k+1}|}\sum_{{\bm s}_l\in\mathcal{B}_{i, k+1}}F_{i}({\bm x}; {\bm s}_{l}), \quad \text{ and } \quad  D_{F_{i},\mathcal{B}_{i,k+1}}({\bm x}):= \frac{1}{|\mathcal{B}_{i, k+1}|}\sum_{{\bm s}_l\in\mathcal{B}_{i, k+1}}D_{F_{i}}({\bm x}; {\bm s}_{l}).
\end{equation*}
Define the set-valued mappings $\ca{U}_{k}: \mathbb{R}^{nd} \rightrightarrows \mathbb{R}^{nd}$ as  
	\begin{equation*}
\begin{aligned}
\ca{U}_{k}({\bm x}) & := \left(D_{F_{1},\mathcal{B}_{1,k+1}}({\bm x}_{(1)}), \ldots, D_{F_{d},\mathcal{B}_{d,k+1}}({\bm x}_{(d)})\right)^{\top}  \\ 
            \end{aligned}
	\end{equation*}
Note that $(F_{i}(\cdot,{\bm s}_{l}),\D_{F_{i}}(\cdot, {\bm s}_l))$ has a $\ca{C}^r$ variational stratification \cite[Theorem 4]{bolte2021conservative}, hence the mappings $\{\ca{U}_{k}\}$ are almost everywhere $\ca{C}^1$ over $\mathbb{R}^{nd}$, denoted by $\ca{U}_{k}({\bm x})= \{ u_{k}({\bm x})\},$ for a.e. ${\bm x}\in \mathbb{R}^{nd}$. Furthermore, let 
\begin{equation*}
    \begin{aligned}
    \ca{Y} & := \left\{{\bm x} \in \mathbb{R}^{nd}: D_{F_{i}}({\bm x}; {\bm s}_{l}) \neq \{\nabla F_i({\bm x}, {\bm s}_{l})\}, \text{ for some } i\in [N] \text{ and some } {\bm s}_{l} \in \mathcal{S}_{i} \right\},\\
        \end{aligned}
\end{equation*}
which is a zero-measure subset. 

With a slight abuse of notation, we vectorize the sequence $\{{\bm X}_{k}\}$ by column-major order, and denote as ${\bm x}_{k} \in \mathbb{R}^{nd}$. Then  the vectorized sequence $\{{\bm x}_{k}\}$ generated by Algorithm \ref{alg:dsgd} satisfies
	\begin{equation}\label{eq:avoidance_update}
    \begin{aligned}
{\bm x}_{k+1} & \in{} ({\bm W}\otimes {\bm I}_{n}){\bm x}_{k} - c \hat{\eta}_k\ca{U}_{k}({\bm x}_{k}). \\
            \end{aligned}
	\end{equation}
Set ${\bm P}:= {\bm W}\otimes {\bm I}_{n}$,  $\mathcal{Q}_{k}:= \hat{\eta}_{k} \mathcal{U}_{k}$, and $q_{k}:= \hat{\eta}_{k} {u}_{k}$. The Jacobian of the mapping ${\bm P}{\bm x} - c q_{k}({\bm x})$ is 
\begin{equation}\label{Jacobian0}
{\bm J}_{c,k}({\bm x}) = {\bm W}\otimes {\bm I}_{n} - c \hat{\eta}_{k}  \Diag(\nabla^2 F_{1, \mathcal{B}_{1,k+1}}({\bm x}_{(1)}), \ldots, \nabla^2 F_{d, \mathcal{B}_{d,k+1}}({\bm x}_{(d)})).
\end{equation}
The determinant of ${\bm J}_{c,k}({\bm x})$ is a zero polynomial of $c$ if and only if there exists ${\bm v}= [{\bm v}_1^{\top}, \ldots, {\bm v}_{d}^{\top}]^{\top}$ such that  ${\bm v} \in \mathrm{Ker}({\bm W}\otimes {\bm I}_n)$ and ${\bm v}_{i} \in \mathrm{Ker}(\nabla^2 F_{i, \mathcal{B}_{i,k+1}}({\bm x}_{(i)}))$ for any $i\in [d]$. As each $F_{i}(\cdot, {\bm s}_{l})$ is a definable function, we can infer that such $({\bm x}_{(1)}, \ldots, {\bm x}_{(d)})$ constitutes a zero-measure subset of $\mathbb{R}^{nd}$. We refer to  $\Gamma_{k}$ as the full-measure subset of $\mathbb{R}^{nd}$, in which $\ca{U}_{k}({\bm x})= \{ u_{k}({\bm x})\},$ and $\det({\bm J}_{c,k}({\bm x}))$ is a non-trival $nd$-th order polynomial of $c$. This indicates the set
\begin{equation*}
    \{c \in \mathbb{R}:\det({\bm J}_{c,k}({\bm x})) =0 \}
\end{equation*}
is zero-measure in $\mathbb{R}$. According to Fubini's Theorem, there exists a full-measure subset $\ca{S}_k \subseteq \bb{R}_+$, such that for any $c \in \ca{S}_k$, $\{{\bm x} \in \Gamma_{k}:\det({\bm J}_{c,k}({\bm x})) \neq 0 \}$ is a full-measure subset in $\mathbb{R}^{nd}$. 

Applying the inverse function theorem, we can conclude that for any $c \in \ca{S}_k$ and any ${\bm x} \in \{x \in \Gamma_{k}:\det({\bm J}_{c,k}({\bm x})) \neq 0 \}$, ${\bm P}{\bm x} - c q_{k}({\bm x})$ is a  local diffeomorphism in a neighborhood of ${\bm x}$, denoted by $\mathbb{B}({\bm x}, \delta_{{\bm x}})$. As a result,
\begin{equation*} 
\{({\bm P} - c q_{k})(\mathbb{B}({\bm x}^i, \delta_{{\bm x}^i}))\}_{{\bm x} \in \{x \in \Gamma_{k}:\det({\bm J}_{c,k}({\bm x})) \neq 0 \}}
\end{equation*}
is an open covering of $\{{\bm x} \in \Gamma_{k}:\det({\bm J}_{c,k}({\bm x})) \neq 0 \}$. Based on Lindelof's lemma \cite{kelley2017general}, there exists countable $\{{\bm x}^i\}_{i \in \mathbb{N}_{+}}\subseteq \{{\bm x} \in \Gamma_{k}:\det({\bm J}_{c,k}({\bm x})) \neq 0 \}$, such that 
\begin{equation*}
    \{{\bm x} \in \Gamma_{k}:\det({\bm J}_{c,k}({\bm x})) \neq 0 \} \subseteq \bigcup_{i \in \mathbb{N}_{+}} ({\bm P} - c q_{k})(\mathbb{B}({\bm x}^i, \delta_{{\bm x}^i})).
\end{equation*}

Given any zero-measure subset $A \subseteq \mathbb{R}^{nd}$, since ${\bm P} - c q_{k}$ is a diffeomorphism over $\mathbb{B}({\bm x}^i, \delta_{{\bm x}^i})$, it can be deduced that the set $\{{\bm z} \in \mathbb{B}({\bm x}^i, \delta_{{\bm x}^i}), {\bm P}{\bm z} - c q_{k}({\bm z}) \in A\}$ is zero-measure. Therefore, the set $\bigcup_{i\in\mathbb{N}_{+}}\{{\bm z} \in \mathbb{B}({\bm x}^i, \delta_{{\bm x}^i}), {\bm P}{\bm z} - c q_{k}({\bm z}) \in A\}$ is a zero-measure subset, which yields that 
\begin{equation*}
\{{\bm z} \in \mathbb{R}^{nd}: {\bm P}{\bm z} - c q_{k}({\bm z}) \in A\} \subset  \bigcup_{i\in\mathbb{N}_{+}}\{{\bm z} \in \mathbb{B}({\bm x}^i, \delta_{{\bm x}^i}), {\bm P}{\bm z} - c q_{k}({\bm z}) \in A\} \cup  \{{\bm z} \in \Gamma_{k}:\det({\bm J}_{c,k}({\bm z})) \neq 0 \}^c  
\end{equation*}
is zero-measure. That is to say, for any $c \in \ca{S}_k$, the set $\{{\bm z} \in \mathbb{R}^{nd}: {\bm P}{\bm z} - c q_{k}({\bm z}) \in A\}$ is zero-measure. 

Let $\ca{S}:= \cap_{k\geq p}\ca{S}_k$, $A_{0}:= A = \mathcal{Y}, A_{k+1}:= \{{\bm x}\in \mathbb{R}^{nd}: {\bm P}{\bm z} - c q_{k}({\bm z}) \in A\}$, and $\hat{A}:=\cup_{k\geq 0} A_{k}$. It can be inferred that $\ca{S}$ and $\hat{A}^c$ are full-measure subsets in $\mathbb{R}$ and $\mathbb{R}^{nd}$, respectively. For any $c \in \ca{S}$, and any ${\bm x} \in \hat{A}^c$, we have ${\bm P}{\bm x} - c q_{k}({\bm x}) \in A^c $. Employing Fubini's theorem, the set $\{c \in\mathbb{R}, {\bm x}\in \mathbb{R}^{nd}, \cup_{k\geq 0} \{{\bm P}{\bm x} - c q_{k}({\bm x})\}\subseteq A^c\}$ is full-measure in $\mathbb{R}\times \mathbb{R}^{nd}$, and thus, there exists a full-measure subset $\hat{\ca{S}} \times \hat{\ca{K}}$,  whenever $(c, {\bm x}_0) \in \hat{\ca{S}} \times \hat{\ca{K}}$, it follows that $\cup_{k\geq 0} \{{\bm P}{\bm x} - c q_{k}({\bm x})\}\subseteq A^c$, i.e. $\{{\bm x}_{k}\} \in \mathcal{Y}^c$.

Furthermore, the update scheme of vectorized sequence $\{{\bm x}_k\}$ can be reformulated as
\begin{equation*}
		\left\{
		\begin{aligned}
			{\bm g}_k ={}& \left(\nabla F_{1,\mathcal{B}_{1,k+1}}({\bm x}_{1, k}), \ldots, \nabla F_{d,\mathcal{B}_{d,k+1}}({\bm x}_{d, k})\right)^{\top} , \\
			{\bm x}_{k+1} & \in{} ({\bm W}\otimes {\bm I}_{n}){\bm x}_{k} - c \hat{\eta}_k {\bm g}_{k}
		\end{aligned}
		\right.
	\end{equation*}
Therefore, by applying Theorem \ref{thm:dsgd} with $\D_f = \partial f$, we can choose $\ca{S} = \hat{\ca{S}} \cap (0,\alpha_c)$ and $\ca{K} = \hat{\ca{K}} \cap \X_0$, such that the claim in Theorem \ref{thm:DSGD2} holds.

\subsection{Proof of Theorem \ref{thm:DSGDM2}}

Let the set-valued mappings $\ca{U}_{1, k}: \mathbb{R}^{2nd} \rightrightarrows \mathbb{R}^{nd}$, $\ca{U}_{2, k}: \mathbb{R}^{2nd} \rightrightarrows \mathbb{R}^{2nd}$ be defined as  
	\begin{equation*}
\begin{aligned}
\ca{U}_{1,k}({\bm x}, {\bm y}) & :=\left(\partial \hat{\phi}({\bm y}), {\bm 0} \right)\\
\ca{U}_{2,k}({\bm x}, {\bm y}) & :=\left({\bm 0}, \tau ({\bm W}\otimes {\bm I}_n) {\bm y} - \tau \left(D_{F_{1},\mathcal{B}_{1,k+1}}({\bm x}_{(1)}), \ldots, D_{F_{d},\mathcal{B}_{d,k+1}}({\bm x}_{(d)})\right)^{\top}\right)\\
            \end{aligned}
	\end{equation*}
where $\hat{\phi}({\bm y}):=(\phi({\bm y}_{(1)}), \ldots, \phi({\bm y}_{(d)}))^{\top}$, $\phi: \mathbb{R}^{n} \to \mathbb{R}$ is convex and admits a unique minimizer at ${\bm 0}$. Notice that $(F_{i}(\cdot,{\bm s}_{l}),\D_{F_{i}(\cdot, {\bm s}_{l})})$ has a $\ca{C}^r$ variational stratification \cite[Theorem 4]{bolte2021conservative}, and $\hat{\phi}$ is differentiable on a full-measure subset of $\mathbb{R}^{nd}$, thus the mappings $\{\ca{U}_{j,k}\}, j=1,2$ are almost everywhere $\ca{C}^1$ over $\mathbb{R}^{2nd}$, the corresponding almost-everywhere-$\mathcal{C}^1$ mappings are denoted by $u_{j,k}({\bm z}), j=1,2$. Furthermore, let 
	\begin{equation*}
    \begin{aligned}
		\ca{Y} & := \left\{({\bm x}, {\bm y}) \in \mathbb{R}^{2nd}: \hat{\phi}({\bm y}) \text{ is not differentiable }, \D_{F_{i, l}}({\bm x}) \neq \{\nabla F_i({\bm x}, {\bm s}_{l})\}, \text{ for some } i\in [N] \right. \\
        & \left. \text{ and some } {\bm s}_{l} \in \mathcal{S}_{i} \right\},\\
        \end{aligned}
	\end{equation*}
which is a zero-measure subset in $\mathbb{R}^{2nd}$.

With the same notations in Section \ref{sec:app1}, the vectorized sequence $\{({\bm x}_{k}, {\bm y}_{k})\}$ generated by Algorithm \ref{alg:DSGD-M} satisfies
\begin{equation}\label{eq:avoidance_update1}
\begin{aligned}
({\bm x}_{k+\frac{1}{2}}, {\bm y}_{k + \frac{1}{2}}  ) & \in{} ({\bm W}\otimes {\bm I}_{n}\times {\bm I}_{nd})({\bm x}_{k}, {\bm y}_{k}) - c \hat{\eta}_k\ca{U}_{1,k}({\bm x}_{k}, {\bm y}_{k}),\\
\left({\bm x}_{k+1}, {\bm y}_{k + 1}  \right) & \in{} ({\bm I}_{nd} \times {\bm W}\otimes {\bm I}_{n})({\bm x}_{k+\frac{1}{2}}, {\bm y}_{k+\frac{1}{2}}) - c \hat{\eta}_k\ca{U}_{2,k}({\bm x}_{k+ \frac{1}{2}}, {\bm y}_{k+ \frac{1}{2}}),\\        \end{aligned}
\end{equation}
Set ${\bm P}_{1}:= {\bm W}\otimes {\bm I}_{n}\times {\bm I}_{nd}$, ${\bm P}_{2}:= {\bm I}_{nd} \times {\bm W}\otimes {\bm I}_{n}$, then the update scheme of vectorized sequence $\{({\bm x}_{k}, {\bm y}_{k})\}$ is 
\begin{equation*}
\begin{aligned}
({\bm x}_{k+1}, {\bm y}_{k + 1}) &{} = {\bm P}_2 ({\bm P}_1({\bm x}_{k}, {\bm y}_{k}) ) - c {\bm P}_2 (\hat{\eta}_k\ca{U}_{1,k}({\bm x}_{k}, {\bm y}_{k})) - c \hat{\eta}_k\ca{U}_{2,k}({\bm P}_{1}({\bm x}_{k}, {\bm y}_{k})- c \hat{\eta}_k\ca{U}_{1,k}({\bm x}_{k}, {\bm y}_{k}))  \\
&{} := {\bm P}({\bm x}_{k}, {\bm y}_{k}) - c \mathcal{Q}_{k}({\bm x}_{k}, {\bm y}_{k})
\end{aligned}
\end{equation*}
where $\mathcal{Q}_{k}$ is almost everywhere $\ca{C}^1$ over $\mathbb{R}^{2nd}$. 

The Jacobian of  mapping ${\bm P}_1({\bm x}, {\bm y}) - c \hat{\eta}_k u_{1, k}({\bm x}, {\bm y})$ is 
\begin{equation*}\label{Jacobian1}
{\bm J}_{1c}({\bm x}, {\bm y})= \begin{pmatrix}
{\bm W}\otimes {\bm I}_{n}& 0\\
0& {\bm I}_{nd} \\
 \end{pmatrix} - c \hat{\eta}_{k} \begin{pmatrix} 0 & \nabla^2\phi({\bm y}) \\ 0 & 0 \\ 
 \end{pmatrix},  
\end{equation*}
where $\nabla^2 \phi({\bm y})$ is well-defined over a full-measure set. And the Jacobian of the mapping ${\bm P}_2({\bm x}, {\bm y}) - c \hat{\eta}_k u_{2, k}({\bm x}, {\bm y})$ is 
\begin{equation*}\label{Jacobian2}
{\bm J}_{2c}({\bm x}, {\bm y}) = \begin{pmatrix}
{\bm I}_{nd}& 0\\
0&  {\bm W}\otimes {\bm I}_{n} \\
 \end{pmatrix} - c \hat{\eta}_{k} \begin{pmatrix} 0 & 0 \\ \tau \Diag(\nabla^2 F_{1, \mathcal{B}_{1,k+1}}({\bm x}_{(1)}), \ldots, \nabla^2 F_{d, \mathcal{B}_{d,k+1}}({\bm x}_{(d)})) & - \tau   {\bm W}\otimes {\bm I}_{n} \\ 
 \end{pmatrix},  
\end{equation*}
Henceforth, the Jacobian of  ${\bm P}({\bm x}, {\bm y}) - c \mathcal{Q}_{k}({\bm x}, {\bm y})$ is ${\bm J}_{2c}({\bm x}, {\bm y}){\bm J}_{1c}({\bm x}, {\bm y})$.

When ${\bm W}$ is invertible, then the determinant of ${\bm J}_{2}({\bm x}, {\bm y}){\bm J}_1({\bm x}, {\bm y})$ is a non-trivial polynomial of $c$ for almost everywhere  $({\bm x}, {\bm y})\in \mathbb{R}^{2nd}$. Leveraging similar proof techniques in Section \ref{sec:app1}, we can derive that there exists a full-measure subset $\hat{\ca{S}} \subseteq \bb{R}_+$, such that for any $c \in \hat{\ca{S}}$, there exists a full-measure subset $\hat{\ca{K}}$ of $\mathbb{R}^{nd} \times \mathbb{R}^{nd}$ with the property that, whenever we choose $({\bm x}_0, {\bm y}_0) \in \hat{\ca{K}}$, it satisfies that $\{({\bm x}_k, {\bm y}_k)\} \cap \ca{Y} = \varnothing$. Consequently, the update scheme of vectorized sequence $({\bm x}_k, {\bm y}_k)$ can be reformulated as
\begin{equation*}
	\left\{
	\begin{aligned}
            {\bm x}_{k+1} & \in{} ({\bm W}\otimes {\bm I}_{n}){\bm x}_{k} - c \hat{\eta}_k \nabla \phi({\bm y}_{k}), \\
            {\bm g}_{k+1} ={}& \left(\nabla F_{1,\mathcal{B}_{1,k+1}}({\bm x}_{1,k+1}), \ldots, \nabla F_{d,\mathcal{B}_{d,k+1}}({\bm x}_{d, k+1})\right)^{\top} , \\
			{\bm y}_{k+1} & \in{} (1-c \tau\hat{\eta}_k) ({\bm W}\otimes {\bm I}_{n}){\bm y}_{k} +c \tau\hat{\eta}_k {\bm g}_{k+1}.
		\end{aligned}
		\right.
	\end{equation*}
	Therefore, by applying Theorem \ref{thm:DSGD-M} with $\D_f = \partial f$, we can choose $\ca{S} = \hat{\ca{S}} \cap (0,\alpha_c), j=1,2$ and $\ca{K} = \hat{\ca{K}}\cap (\X_0 \times \mathbb{R}^{nd})$, such that the assertion in Theorem \ref{thm:DSGDM2} holds.

\section{Synthetic numerical examples on  avoiding spurious critical points}\label{appendix:2}
In this part, we present numerical experiments on a synthetic example to illustrate that, under random initialization, Algorithm~\ref{alg:DSGD-M} converges to a Clarke-critical point with probability~$1$. 

We consider a four-node ring network with a non-singular mixing matrix
\[
W=\frac{1}{3}
\begin{pmatrix}
1 & 1 & 0 & 1\\
1 & 1 & 1 & 0\\
0 & 1 & 1 & 1\\
1 & 0 & 1 & 1
\end{pmatrix}.
\]
Moreover, the local objective functions are defined as
\begin{equation*}
\begin{aligned}
f_1(x,y) &= \mathrm{ReLU}(x)+\mathrm{ReLU}(y),\\
f_2(x,y) &= \mathrm{ReLU}(x)+\mathrm{ReLU}(y),\\
f_3(x,y) &= \mathrm{ReLU}(-x)+\mathrm{ReLU}(-y),\\
f_4(x,y) &= \mathrm{ReLU}(-x-1) + (x+1) - \mathrm{ReLU}(x+1) 
          + \mathrm{ReLU}(-x-2) + (x+2) - \mathrm{ReLU}(x+2) \\
        &\quad + \mathrm{ReLU}(-y-1) + (y+1) - \mathrm{ReLU}(y+1),
\end{aligned}
\end{equation*}
respectively.

Note that $f_4(x,y)$ is identically zero. However, by following the same theoretical analysis in \cite{NEURIPS2022_a9077da4}, automatic differentiation (AD) yields
$\partial_x f_4(-1,y)=1$, $\partial_x f_4(-2,y)=1$, and $\partial_y f_4(x,-1)=1$ for any $x \in \bb{R}$. 
As a result, AD introduces spurious stationary points for 
$f=f_1+f_2+f_3+f_4$, which are 
\[
(-1,0),\ (0,-1),\ (-2,0),\ (-1,-1),\ (-2,-1).
\]

In our experiments, we randomly initialize $(x_0,y_0)\sim \mathcal{N}\!\bigl((0,0), I_2\bigr)$ in Algorithm~\ref{alg:DSGD-M}, and in each epoch we compute only one full-batch local gradient using AD. 
Figure~\ref{fig:countour} reports a randomly sampled iterate trajectory and the percentages over $100$ independent runs that converge to different critical points. 
A run is declared to converge to a critical point if the squared distance from its final iterate to that point is below $10^{-4}$. 
As shown in Figure~\ref{fig:countour}, almost all runs generated by Algorithm~\ref{alg:DSGD-M} converge to the true Clarke-critical point $(0,0)$, which is consistent with Theorem~\ref{thm:DSGDM2}.

\begin{figure*}[!t]
\centering

\subfloat[Iterate trajectory]{\includegraphics[width=2.8in]{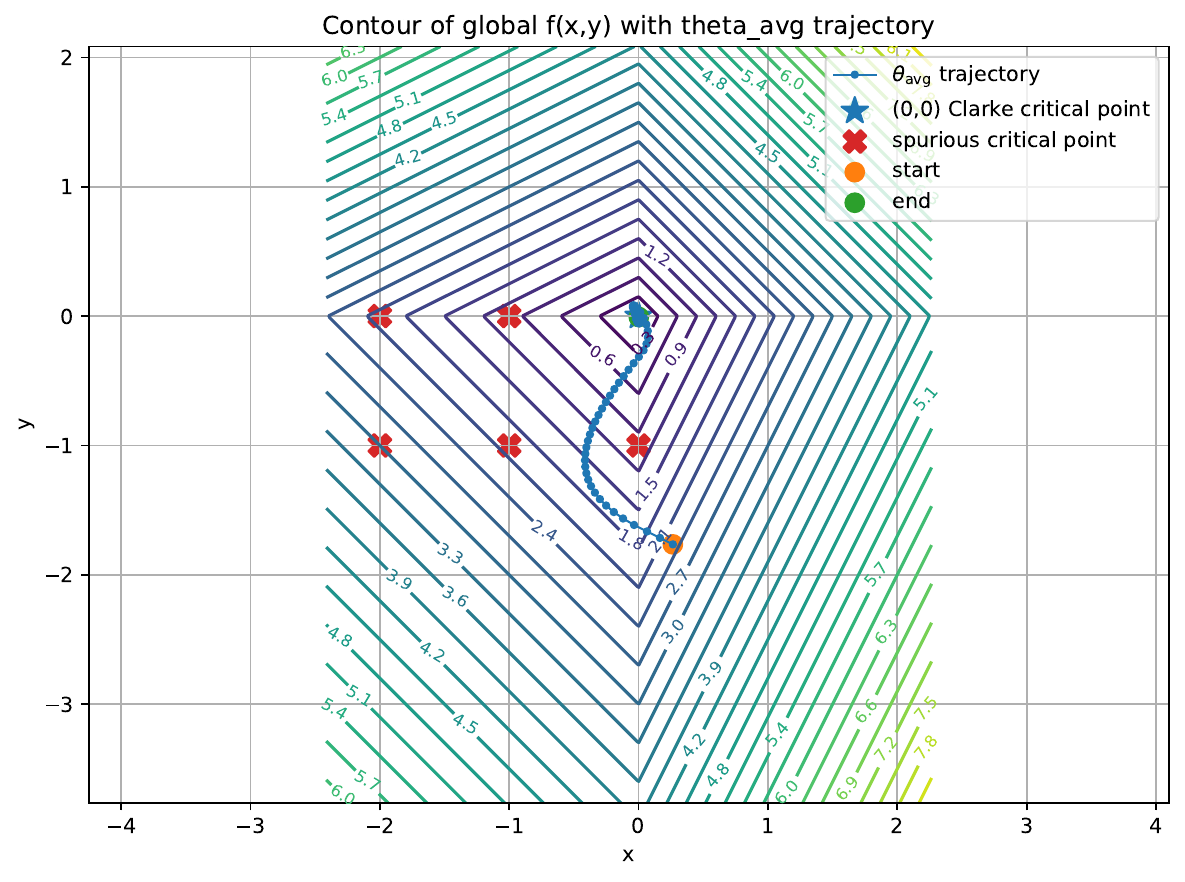}}
\hfil
\subfloat[The percentages on different types of the final results]{\includegraphics[width=3.1in]{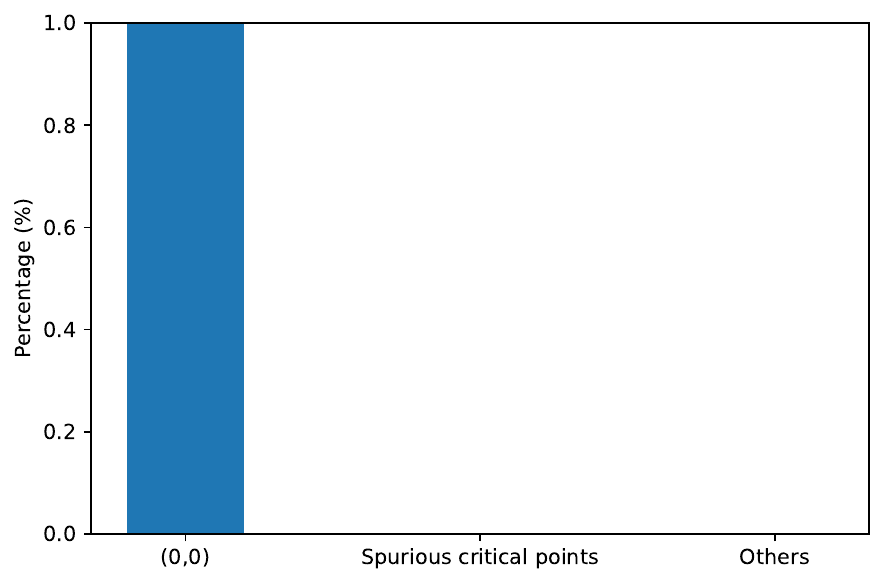}}
\caption{Numerical results of Algorithm \ref{alg:DSGD-M} on a toy example.}
\label{fig:countour}
\end{figure*}

\section{Developing Decentralized Generalized ADAM Methods with Convergence Guarantees}\label{appendix:adam}

Adam \cite{adam} uses a momentum term and an exponential moving average of squared gradients as an adaptive scaling factor, thereby applying coordinate-wise adaptive step sizes and updating parameters with bias correction. After this work, a number of efficient variants are developed, such as AdaBelief \cite{zhuang2020adabelief}, AMSGrad \cite{reddi2019convergence}, decentralized AMSGrad \cite{chen2023convergence}, etc. In the single-agent setting, \cite{xiao2023adam, ding2023adam} investigate the global convergence properties of Adam-family methods for nonsmooth objectives via a delicately constructed Lyapunov function.

In this part, we present a decentralized generalized Adam method in Algorithm \ref{alg:DAdam}, which fits into \eqref{Eq_Framework}. Notice  $b_{i, 1}, \ldots, b_{i, s}$ in Line 9 represent the block sizes in Adam-mini at node $i$.

Let ${\bm M}_{k}:= [{\bm m}_{1,k}, \ldots, {\bm m}_{d,k}]$, ${\bm V}_{k}:= [{\bm v}_{1,k}, \ldots, {\bm v}_{d,k}]$. We have the following theorem illustrating the convergence properties of Algorithm \ref{alg:DAdam}.

\begin{theo}\label{thm:DGAdam}
Suppose Assumption \ref{Assumption_obj} holds. $\{{\bm X}_k\}$ is generated by Algorithm \ref{alg:DAdam}. 
    \begin{enumerate}[label={(\arabic*)}]
        \item When Assumption \ref{Assumption_app1} holds, there exists $\alpha_{c}>0$, for any $c\in (0, \alpha_{c})$, it holds that almost surely, any cluster point of $\{{\bm X}_{k}\}$ is a $\D_f$-critical point of \eqref{Prob_DOP} and $\{f({\bm x}_{i,k}): k \in \bb{N}\}$ converges for any $i \in [d]$.
        \item When Assumption \ref{Assumption_app2} holds, for any $\varepsilon>0$, there exists $\alpha_{c}>0$, for any $c\in (0, \alpha_{c})$, 
        \begin{equation*}
            \mathbb{P}\left(\lim_{k \to \infty}\mathrm{dist}({\bm X}_{k}, \{{\bm X}\in \mathbb{R}^{n\times d}: {\bm X}={\bm x}{\bm 1}^{\top}, {\bm 0} \in D_f({\bm x})\}) =0  \right)\geq 1-\varepsilon
        \end{equation*}
and 
\begin{equation*}
\mathbb{P}(f({\bm x}_{i,k}) \textit { converges })\geq 1-\varepsilon, \textit{ }\forall i \in [d].
\end{equation*}
    \end{enumerate}
\end{theo}

\begin{algorithm}
\caption{DGAdam for solving \eqref{Prob_DOP}.}
\label{alg:DAdam}
\begin{algorithmic}[1] 
\Require Initial point ${\bm x}_{0}\in \mathbb{R}^n$, initial momentum terms ${\bm m}_{0}, {\bm v}_0 \in \mathbb{R}^n$, momentum parameters $\tau_1, \tau_2 > 0$, an auxiliary function $\phi:\Rn \to \bb{R}$,  and a mixing matrix ${\bm W}$. 
\For{all $i\in [d]$ in parallel}
\State Set $k \gets 0$. Initialize ${\bm x}_{i,k}={\bm x}_0, {\bm m}_{i,k}={\bm m}_0, $; 
\While{not terminated}
\State Choose the step-size $\eta_{k}$;
\State Communicate and update the local variable 
    $${\bm x}_{i, k+1} \in \sum_{j\in \mathcal{N}_{i}}{\bm W}(i,j) {\bm x}_{j, k} - \eta_{k} \frac{1}{\sqrt{{\bm v}_{i, k} + \epsilon}} \odot ({\bm m}_{i,k} + \rho {\bm d}_{i,k});$$    
    \State Compute ${\bm d}_{i,k+1} \in \frac{1}{|\mathcal{B}_{i,k+1}|} \sum_{{\bm s}_l \in \mathcal{B}_{i,k+1}}  D_{F_{i,l}}({\bm x}_{i,k+1})$;
\State Randomly select a mini-batch $\mathcal{B}_{i,k} \subseteq \mathcal{S}_{i}$;
    \State Communicate and update momentum 
    $$
    {\bm m}_{i, k+1}  = (1-\tau_1 \eta_{k}) \sum_{j\in \mathcal{N}_{i}} {\bm W}(i,j) {\bm m}_{j,k}     + \tau_1 \eta_{k} {\bm d}_{i,k+1};
    $$
    \State Communicate and update second-moment estimate by one of the formats
    $$
    \begin{aligned}
    {\bm v}_{i, k+1}  & \in (1-\tau_2 \eta_{k})  \sum_{j\in \mathcal{N}_{i}} {\bm W}(i,j) {\bm v}_{j,k}     + \tau_2 \eta_{k} {\bm d}_{i, k+1} \odot {\bm d}_{i, k+1};  \hspace{9em} \triangleleft \text{ Adam }\\
    {\bm v}_{i, k+1}  & \in (1-\tau_2 \eta_{k})  \sum_{j\in \mathcal{N}_{i}} {\bm W}(i,j) {\bm v}_{j,k}     + \tau_2 \eta_{k} \Diag(\frac{{\bm 1}{\bm 1}^{\top}}{b_{i, 1}}, \ldots, \frac{{\bm 1}{\bm 1}^{\top}}{b_{i, s}}) {\bm d}_{i, k+1} \odot {\bm d}_{i, k+1}; \triangleleft \text{ Adam-mini }\\
    \end{aligned}
    $$
    \State Set $k \gets k + 1$;
\EndWhile
\EndFor
\State \Return ${\bm X}_k:= [{\bm x}_{1,k}, \ldots, {\bm x}_{d,k}]$;
\end{algorithmic}
\end{algorithm}

\begin{proof}
The compact format of Algorithm \ref{alg:DAdam} with Adam update can be written as
 \begin{equation}
        \label{Eq_Thm_DGAdam0}
        \begin{aligned}
            {\bm X}_{k+1} \in{}&  {\bm X}_k {\bm W} - \eta_k \frac{1}{\sqrt{{\bm V}_{k} + \epsilon {\bm I}}} \odot ({\bm M}_{k} + \rho {\bm D}_{k}), \\ 
            {\bm D}_{k+1} ={}& [{\bm d}_{1, k+1}, \ldots, {\bm d}_{d, k+1}],\\
            {\bm M}_{k+1} ={}& (1-\tau_1 \eta_{k}) {\bm M}_{k}{\bm W}+ \tau_1 \eta_{k} {\bm D}_{k+1},\\
            {\bm V}_{k+1} \in{}& (1-\tau_2\eta_{k}) {\bm V}_{k}{\bm W}+ \tau_2 \eta_{k} {\bm D}_{k+1}\odot {\bm D}_{k+1},\\
        \end{aligned}
    \end{equation}
However, if we directly define ${\bm Z}_k := \left[\begin{smallmatrix}
        {\bm X}_{k}\\
        {\bm M}_{k}\\ 
        {\bm V}_k
    \end{smallmatrix} \right]$, we cannot find a function $\Phi_i$ that satisfies Assumption \ref{Assumption_framework}.
    To address this issue, we introduce an auxiliary update scheme to  Algorithm \ref{alg:DAdam} parameterized by $K>0$. 
    \begin{equation}
        \label{Eq_Thm_DGAdam}
        \begin{aligned}
            {\bm X}_{k+1} \in{}&  {\bm X}_k {\bm W} - \eta_k \frac{1}{\sqrt{{\bm V}_{k} + \epsilon {\bm I}}} \odot ({\bm M}_{k} + \rho {\bm D}_{k}), \\ 
            {\bm D}_{k+1} ={}& [{\bm d}_{1, k+1}, \ldots, {\bm d}_{d, k+1}],\\
            {\bm M}_{k+1} ={}& (1-\tau_1 \eta_{k}) {\bm M}_{k}{\bm W}+ \tau_1 \eta_{k} {\bm D}_{k+1},\\
            \frac{1}{d}{\bm V}_{k+1}{\bm 1}_d {\bm 1}^{\top}_d \in{}& (1-\tau_2\eta_{k}) \frac{1}{d}{\bm V}_{k+1}{\bm 1}_d {\bm 1}^{\top}_d{\bm W}+ \tau_2 \eta_{k} \frac{1}{d}{\bm D}_{k+1}\odot {\bm D}_{k+1}{\bm 1}_d {\bm 1}^{\top}_d \mathbb{1}_{K}({\bm V}_{k}),\\
        \end{aligned}
    \end{equation}
    where $\mathbb{1}_{K}$ denotes the indication function of the subset $\{{\bm V}: \|{\bm V}\| \leq K \}$. 
    Denote  the filtration $\ca{F}_k := \sigma(\{{\bm X}_{l}: l\leq k\})$, $ D_{F_{\mathcal{B}_{i,k+1}}}({\bm x}_{i,k+1}):= \frac{1}{|\mathcal{B}_{i,k+1}|} \sum_{{\bm s}_l \in \mathcal{B}_{i,k+1}}  D_{F_{i,l}}({\bm x}_{i,k+1}) $, and let
    \begin{equation*}
    \mathcal{V}_i({\bm x}) := \begin{cases}\frac{1}{N}\sum_{k=0}^{N-1} D_{F_{\mathcal{B}_{i,k+1}}}({\bm x})\odot D_{F_{\mathcal{B}_{i,k+1}}}({\bm x}) &  \text{ for case (1), } \\ \mathbb{E}[D_{F_{\mathcal{B}_{i,k+1}}}({\bm x})\odot D_{F_{\mathcal{B}_{i,k+1}}}({\bm x})| \mathcal{F}_{k}] & \text{ for case (2), }\\
    \end{cases}
    \end{equation*}
    and
\begin{equation*}
{\bm Z}_k := \left[\begin{smallmatrix}
        {\bm X}_{k}\\
        {\bm M}_{k}\\ 
        \frac{1}{d}{\bm V}_{k}{\bm 1}_d {\bm 1}^{\top}_d
    \end{smallmatrix} \right], {\bm H}_k := \bb{E}\left[\left[\begin{smallmatrix}
       \frac{1}{\sqrt{{\bm V}_{k} + \epsilon {\bm I}}} \odot ({\bm M}_{k} + \rho {\bm D}_{k})\\
        \tau_1 {\bm M}_{k}{\bm W}- \tau_1 {\bm D}_{k+1}\\
         \tau_2  \frac{1}{d}{\bm V}_{k}{\bm 1}_d {\bm 1}^{\top}_d -\tau_2 \frac{1}{d}{\bm D}_{k+1}\odot {\bm D}_{k+1}{\bm 1}_d {\bm 1}^{\top}_d       
         \mathbb{1}_{K}({\bm V}_{k})
    \end{smallmatrix} \right] |\ca{F}_k\right], 
\end{equation*}    
\begin{equation*}   
\Xi_{k+1} := \left[\begin{smallmatrix}
        \frac{1}{\sqrt{{\bm V}_{k} + \epsilon {\bm I}}} \odot ({\bm M}_{k} + \rho {\bm D}_{k})\\
        \tau_1 {\bm M}_{k}{\bm W}- \tau_1 {\bm D}_{k+1}\\
         \tau_2  \frac{1}{d}{\bm V}_{k}{\bm 1}_d {\bm 1}^{\top}_d -\tau_2  \frac{1}{d}{\bm D}_{k+1}\odot {\bm D}_{k+1}{\bm 1}_d {\bm 1}^{\top}_d       
         \mathbb{1}_{K}({\bm V}_{k})
    \end{smallmatrix} \right] - {\bm H}_k.
\end{equation*}
Then \eqref{Eq_Thm_DGAdam} can be reshaped as ${\bm Z}_{k+1} \in {\bm Z}_k {\bm W} - \eta_k ({\bm H}_k + \Xi_{k+1})$, which aligns with the update format of \eqref{Eq_Framework}.

From the local boundedness of $\D_{f_i}$, we can conclude that $\{{\bm D}_k\}$ is uniformly bounded almost surely whenever $\{{\bm Z}_{k}\}$ is bounded, which further leads to the local boundedness of ${\bm H}_{k}$ and $\Xi_{k+1}$, i.e. Assumption \ref{Assumption_framework}(2) holds. Moreover, we set 
\begin{equation*}
\Phi_i^{(K)}({\bm x}, {\bm m}, {\bm v}) := \left[\begin{smallmatrix}
         (|{\bm v}| +\epsilon)^{-1/2}\odot ({\bm m} +\rho D_{f_i}({\bm x}))  \\
        \tau_1 {\bm m} - \tau_1 \D_{f_i}({\bm x}) \\
        \tau_2 {\bm v} -\tau_2 \mathcal{V}_i({\bm x})\mathbb{1}_{K}({\bm v})
\end{smallmatrix} \right].
\end{equation*}
\eqref{relation_H_PHI} holds vacuously for case (1) with a sufficiently large nonnegative $\delta_k$. Notice that $\mathbb{E}[\frac{1}{d}{\bm D}_{k+1}\odot {\bm D}_{k+1}{\bm 1}_d| \mathcal{F}_k] \in \frac{1}{d}\sum_{i=1}^d \mathcal{V}_{i}^{\|{\bm x}_{i,k}-{\bm x}_{i,k+1}\|}({\bm x}_{i,k}).$ We set $\delta_k$ to be an exponential moving average 
\begin{equation}\label{eq:delta_kk}
\begin{aligned}
\delta_k & = \eta_{k}\frac{\|{\bm M}_{k}+\rho {\bm D}_k\|}{\sqrt{\epsilon}} +2\sum_{j=0}^{k-1} \lambda_2^{k-1-j}\eta_{j} \frac{\|{\bm M}_{j}+\rho {\bm D}_j\|}{\sqrt{\epsilon}}\\
& + \tau_2 \eta_{k-1}\|{\bm D}_{k} \odot {\bm D}_k\| + \sum_{j=0}^{k-1}(\Pi_{i=0}^{j} (1-\tau_2 \eta_{k-1-i})) \lambda_2^{j+1} \tau_2\eta_{k-j-2} \|{\bm D}_{k-1-j} \odot {\bm D}_{k-j-1}\|,
\end{aligned}
\end{equation}
for case (2).  Then, we attain that 
\begin{equation*}
\Phi^{(K)}({\bm x}, {\bm m}, {\bm v}) = \left[\begin{smallmatrix}
         (|v| +\epsilon)^{-1/2}\odot (m +\rho D_{f}({\bm x}))  \\
        \tau_1 {\bm m} - \tau_1 D_{f}({\bm x}) \\
        \tau_2 {\bm v} -\tau_2 \conv\left(\frac{1}{d}\sum_{i=1}^d\mathcal{V}_i({\bm x})\right)\mathbb{1}_{K}({\bm v})
\end{smallmatrix} \right].
\end{equation*}
Since set-valued mapping $\conv\left(\frac{1}{d}\sum_{i=1}^d\mathcal{V}_i({\bm x})\right)$ is a nonnegative, convex compact valued, and graph-closed, by Proposition 5.3 in \cite{xiao2023convergence}, we have $\psi^{(K)}({\bm x}, {\bm m}, {\bm v}) = f({\bm x}) +\frac{1}{2\tau_1} \langle {\bm m}, (|{\bm v}| +\epsilon )^{-1/2}\odot{\bm m} \rangle + K\max\{0, \|{\bm v}\|-K\}$ is a coercive Lyapunov function of differential inclusion $(\frac{d {\bm x}}{d t}, \frac{d{\bm m}}{d t}, \frac{d{\bm v}}{d t}) \in - \Phi^{(K)}({\bm x}, {\bm m}, {\bm v})$, with stable  set $\mathcal{A}_{K}=\{({\bm x}, {\bm m}, {\bm v})\in \mathbb{R}^{n} \times \mathbb{R}^{n} \times \mathbb{R}^{n}: {\bm 0}\in D_{f}({\bm x}), {\bm m}=0, \|{\bm v}\|\leq K\}$, which
verifies Assumption \ref{Assumption_framework}-(3). Moreover,  Assumption \ref{Assumption_framework}-(4) directly follows from the definability of $f$ and $\mathcal{A}_{K}= \{ ({\bm x}, {\bm m}, {\bm v}): {\bm 0} \in D_{\psi^{(K)}}({\bm x}, {\bm m}, {\bm v})\}$. 

For case (1), we set $\mathcal{U}_{i,j}({\bm x}, {\bm m}, {\bm v}):= \left[\begin{smallmatrix}
         (|{\bm v}| +\epsilon)^{-1/2}\odot ({\bm m} +\rho  D_{F_{\mathcal{B}_{i,j}}}({\bm x}))  \\
        \tau_1 {\bm m} - \tau_1 D_{F_{\mathcal{B}_{i,j}}}({\bm x})) \\
        \tau_2 {\bm v} -\tau_2  D_{F_{\mathcal{B}_{i,j+1}}}({\bm x}) \odot D_{F_{\mathcal{B}_{i,j+1}}}({\bm x})
 \mathbb{1}_{K}({\bm v})
\end{smallmatrix} \right]$, where $\mathcal{B}_{i,j}$, $0\leq j \leq N-1$ are subsets of equal size. We also set $\rho_k$ similar to $\delta_k$ in \eqref{eq:delta_kk}. Then \eqref{Eq_Thm_DGAdam} is a special form of \eqref{Eq_Framework} with \eqref{eq:reshuf}. Assumption \ref{Assumption_app1} togather with the fact $\frac{1}{N}\sum_{i=0}^{N-1} \mathcal{U}_{i,j}({\bm x}, {\bm m}, {\bm v}) = \Phi_{i}^{(K)}({\bm x}, {\bm m}, {\bm v})$ implies that Assumption \ref{asp:reshuffling} is satisfied for case (1). For case (2), we choose $\xi_{i, k}:=\{\textit{Draw out } \mathcal{B}_{i,k} \textit{ from agent } i\}$, and 
\begin{equation*}
\chi_{i}({\bm x}, {\bm m}, {\bm v}, \xi_{i, k})\in \left[\begin{smallmatrix}
         (|v| +\epsilon)^{-1/2}\odot (\rho D_{F_{\mathcal{B}_{i,k}}}({\bm x})- \rho D_{f_i}({\bm x})  \\
      \tau_1 D_{f_i}({\bm x})  - \tau_1 D_{F_{\mathcal{B}_{i,k}}}({\bm x}) \\
       \tau_2 \frac{1}{d}\sum_{i=1}^d\mathcal{V}_i({\bm x})\mathbb{1}_{K}({\bm v})  -\tau_2 \frac{1}{d}\sum_{i=1}^d D_{F_{\mathcal{B}_{i,k}}}({\bm x}) \odot D_{F_{\mathcal{B}_{i,k}}}({\bm x})
 \mathbb{1}_{K}({\bm v})
\end{smallmatrix} \right],
\end{equation*}
\eqref{Eq_Thm_DGAdam} is a particular form of \eqref{Eq_Framework} with \eqref{eq:chi}, and Assumption \ref{Assumption_stochastic} is satisfied for case (2). Plugging Lemma \ref{lem:2} and Theorem \ref{theo:withreplace_thm1}, we can prove that $\delta_{k}, \rho_k \to 0$, as $k \to \infty$ with arbitrary high probability. 

Applying Theorem \ref{thm:reshuffing} and \ref{theo:withreplace_thm1}, we derive that \eqref{Eq_Thm_DGAdam} is restricted in some bounded set for case (1) and high-probability bounded for case (2) for any $K>0$, which implies that Algorithm \ref{alg:DAdam} coincides with \eqref{Eq_Thm_DGAdam} for some sufficiently large $K$. Employing Theorem \ref{theo:reshuff} and \ref{theo:with}, we get the desired results in Theorem \ref{thm:DGAdam}. 

For decentralized Adam-mini, we denote $\Sigma_i:= \Diag(\frac{{\bm 1}{\bm 1}^{\top}}{b_{i, 1}}, \ldots, \frac{{\bm 1}{\bm 1}^{\top}}{b_{i, s}})$, and replace the update schemes of $\frac{1}{d}{\bm V}_{k+1}{\bm 1}_d {\bm 1}^{\top}_d$ and $\mathcal{V}_i(x)$ by 
\begin{equation*}
 \frac{1}{d}{\bm V}_{k+1}{\bm 1}_d {\bm 1}^{\top}_d \in{} (1-\tau_2\eta_{k}) \frac{1}{d}{\bm V}_{k+1}{\bm 1}_d {\bm 1}^{\top}_d{\bm W}+ \tau_2 \eta_{k} \frac{1}{d}[\Sigma_1 {\bm d}_{1, k+1}\odot {\bm d}_{1, k+1}, \ldots, \Sigma_d {\bm d}_{d, k+1}\odot {\bm d}_{d, k+1}]{\bm 1}_d {\bm 1}^{\top}_d \mathbb{1}_{K}({\bm V}_{k}) 
\end{equation*}
and
 \begin{equation*}
    \mathcal{V}_i({\bm x}) := \begin{cases}\frac{1}{N}\sum_{k=0}^{N-1} \Sigma_i D_{F_{\mathcal{B}_{i,k+1}}}({\bm x})\odot D_{F_{\mathcal{B}_{i,k+1}}}({\bm x}) &  \text{ for case (1), } \\ \mathbb{E}[\Sigma_i D_{F_{\mathcal{B}_{i,k+1}}}({\bm x})\odot D_{F_{\mathcal{B}_{i,k+1}}}({\bm x})| \mathcal{F}_{k}] & \text{ for case (2). }\\
    \end{cases}
    \end{equation*}
Correspondingly, we add $\Sigma_i$ before the term $D_{F_{\mathcal{B}_{i,k}}}({\bm x}) \odot D_{F_{\mathcal{B}_{i,k}}}({\bm x})$ in the  expression of $\mathcal{U}_{i,j}$ and $\chi_i$. Then, using the same approach as in the above proof of decentralized Adam, we can obtain the desired results.
  
\end{proof}

\bibliographystyle{IEEEtran}
\bibliography{ref}

@inproceedings{ojasiewicz1965EnsemblesS,
  title={Ensembles semi-analytiques},
  author={Stanisław Łojasiewicz},
  year={1965},
  url={https://api.semanticscholar.org/CorpusID:118074635}
}

@article{polyak1964some,
  title={Some methods of speeding up the convergence of iteration methods},
  author={Polyak, Boris T},
  journal={Ussr computational mathematics and mathematical physics},
  volume={4},
  number={5},
  pages={1--17},
  year={1964},
  publisher={Elsevier}
}

@article{chen2023symbolic,
	title={Symbolic discovery of optimization algorithms},
	author={Chen, Xiangning and Liang, Chen and Huang, Da and Real, Esteban and Wang, Kaiyuan and Liu, Yao and Pham, Hieu and Dong, Xuanyi and Luong, Thang and Hsieh, Cho-Jui and others},
	journal={arXiv preprint arXiv:2302.06675},
	year={2023}
}

@article{castera2021inertial,
  title={An inertial newton algorithm for deep learning},
  author={Castera, Camille and Bolte, J{\'e}r{\^o}me and F{\'e}votte, C{\'e}dric and Pauwels, Edouard},
  journal={The Journal of Machine Learning Research},
  volume={22},
  number={1},
  pages={5977--6007},
  year={2021},
  publisher={JMLRORG}
}

@inproceedings{lu2019gnsd,
  title={GNSD: A gradient-tracking based nonconvex stochastic algorithm for decentralized optimization},
  author={Lu, Songtao and Zhang, Xinwei and Sun, Haoran and Hong, Mingyi},
  booktitle={2019 IEEE Data Science Workshop (DSW)},
  pages={315--321},
  year={2019},
  organization={IEEE}
}

@article{bolte2021conservative,
	title={Conservative set valued fields, automatic differentiation, stochastic gradient methods and deep learning},
	author={Bolte, J{\'e}r{\^o}me and Pauwels, Edouard},
	journal={Mathematical Programming},
	volume={188},
	number={1},
	pages={19--51},
	year={2021},
	publisher={Springer}
}

@article{le2024nonsmooth,
  title={Nonsmooth nonconvex stochastic heavy ball},
  author={Le, Tam},
  journal={Journal of Optimization Theory and Applications},
  pages={1--21},
  year={2024},
  publisher={Springer}
}

@inproceedings{bernstein2018signsgd,
  title={SignSGD: Compressed optimisation for non-convex problems},
  author={Bernstein, Jeremy and Wang, Yu-Xiang and Azizzadenesheli, Kamyar and Anandkumar, Animashree},
  booktitle={International Conference on Machine Learning},
  pages={560--569},
  year={2018},
  organization={PMLR}
}

@book{clarke1990optimization,
	title={Optimization and nonsmooth analysis},
	author={Clarke, Frank H},
	volume={5},
	year={1990},
	publisher={SIAM}
}

@article{benaim2005stochastic,
	title={Stochastic approximations and differential inclusions},
	author={Bena{\"\i}m, Michel and Hofbauer, Josef and Sorin, Sylvain},
	journal={SIAM Journal on Control and Optimization},
	volume={44},
	number={1},
	pages={328--348},
	year={2005},
	publisher={SIAM}
}

@article{davis2020stochastic,
	title={Stochastic subgradient method converges on tame functions},
	author={Davis, Damek and Drusvyatskiy, Dmitriy and Kakade, Sham and Lee, Jason D},
	journal={Foundations of Computational Mathematics},
	volume={20},
	number={1},
	pages={119--154},
	year={2020},
	publisher={Springer}
}

@article{bolte2007clarke,
	title={Clarke subgradients of stratifiable functions},
	author={Bolte, J{\'e}r{\^o}me and Daniilidis, Aris and Lewis, Adrian and Shiota, Masahiro},
	journal={SIAM Journal on Optimization},
	volume={18},
	number={2},
	pages={556--572},
	year={2007},
	publisher={SIAM}
}

@article{bianchi2022convergence,
	title={Convergence of constant step stochastic gradient descent for non-smooth non-convex functions},
	author={Bianchi, Pascal and Hachem, Walid and Schechtman, Sholom},
	journal={Set-Valued and Variational Analysis},
	pages={1--31},
	year={2022},
	publisher={Springer}
}

@article{krizhevsky2009learning,
	title={Learning multiple layers of features from tiny images},
	author={Krizhevsky, Alex and Hinton, Geoffrey and others},
	year={2009},
	publisher={Toronto, ON, Canada}
}

@inproceedings{he2016deep,
	title={Deep residual learning for image recognition},
	author={He, Kaiming and Zhang, Xiangyu and Ren, Shaoqing and Sun, Jian},
	booktitle={Proceedings of the IEEE conference on computer vision and pattern recognition},
	pages={770--778},
	year={2016}
}

@article{xiao2023adam,
  author  = {Nachuan Xiao and Xiaoyin Hu and Xin Liu and Kim-Chuan Toh},
  title   = {Adam-family Methods for Nonsmooth Optimization with Convergence Guarantees},
  journal = {Journal of Machine Learning Research},
  year    = {2024},
  volume  = {25},
  number  = {48},
  pages   = {1--53},
  url     = {http://jmlr.org/papers/v25/23-0576.html}
}

@article{bolte2020mathematical,
  title={A mathematical model for automatic differentiation in machine learning},
  author={Bolte, J{\'e}r{\^o}me and Pauwels, Edouard},
  journal={Advances in Neural Information Processing Systems},
  volume={33},
  pages={10809--10819},
  year={2020}
}

@article{bianchi2021closed,
  title={A closed-measure approach to stochastic approximation},
  author={Bianchi, Pascal and Rios-Zertuche, Rodolfo},
  journal={arXiv preprint arXiv:2112.05482},
  year={2021}
}

@article{bolte2022long,
  title={Long term dynamics of the subgradient method for Lipschitz path differentiable functions},
  author={Bolte, J{\'e}r{\^o}me and Pauwels, Edouard and Rios-Zertuche, Rodolfo},
  journal={Journal of the European Mathematical Society},
  year={2022}
}

@article{duchi2018stochastic,
  title={Stochastic methods for composite and weakly convex optimization problems},
  author={Duchi, John C and Ruan, Feng},
  journal={SIAM Journal on Optimization},
  volume={28},
  number={4},
  pages={3229--3259},
  year={2018},
  publisher={SIAM}
}

@book{borkar2009stochastic,
  title={Stochastic approximation: a dynamical systems viewpoint},
  author={Borkar, Vivek S},
  volume={48},
  year={2009},
  publisher={Springer}
}

@article{xiao2023convergence,
  title={Convergence Guarantees for Stochastic Subgradient Methods in Nonsmooth Nonconvex Optimization},
  author={Xiao, Nachuan and Hu, Xiaoyin and Toh, Kim-Chuan},
  journal={arXiv preprint arXiv:2307.10053},
  year={2023}
}

@article{ruszczynski2020convergence,
  title={Convergence of a stochastic subgradient method with averaging for nonsmooth nonconvex constrained optimization},
  author={Ruszczy{\'n}ski, Andrzej},
  journal={Optimization Letters},
  volume={14},
  number={7},
  pages={1615--1625},
  year={2020},
  publisher={Springer}
}

@article{sundhar2010distributed,
  title={Distributed stochastic subgradient projection algorithms for convex optimization},
  author={Sundhar Ram, S and Nedi{\'c}, Angelia and Veeravalli, Venugopal V},
  journal={Journal of optimization theory and applications},
  volume={147},
  pages={516--545},
  year={2010},
  publisher={Springer}
}

@article{pillai2005perron,
  title={The Perron-Frobenius theorem: some of its applications},
  author={Pillai, S Unnikrishna and Suel, Torsten and Cha, Seunghun},
  journal={IEEE Signal Processing Magazine},
  volume={22},
  number={2},
  pages={62--75},
  year={2005},
  publisher={IEEE}
}

@article{xiao2004fast,
  title={Fast linear iterations for distributed averaging},
  author={Xiao, Lin and Boyd, Stephen},
  journal={Systems \& Control Letters},
  volume={53},
  number={1},
  pages={65--78},
  year={2004},
  publisher={Elsevier}
}

@article{xiao2006distributed,
  title={Distributed average consensus with time-varying metropolis weights},
  author={Xiao, Lin and Boyd, Stephen and Lall, Sanjay},
  journal={Automatica},
  volume={1},
  year={2006}
}

@article{shi2015extra,
  title={Extra: An exact first-order algorithm for decentralized consensus optimization},
  author={Shi, Wei and Ling, Qing and Wu, Gang and Yin, Wotao},
  journal={SIAM Journal on Optimization},
  volume={25},
  number={2},
  pages={944--966},
  year={2015},
  publisher={SIAM}
}

@article{nedic2018network,
  title={Network topology and communication-computation tradeoffs in decentralized optimization},
  author={Nedi{\'c}, Angelia and Olshevsky, Alex and Rabbat, Michael G},
  journal={Proceedings of the IEEE},
  volume={106},
  number={5},
  pages={953--976},
  year={2018},
  publisher={IEEE}
}

@incollection{benaim2006dynamics,
  title={Dynamics of stochastic approximation algorithms},
  author={Bena{\"\i}m, Michel},
  booktitle={Seminaire de probabilites XXXIII},
  pages={1--68},
  year={2006},
  publisher={Springer}
}

@article{nedic2009distributed,
  title={Distributed subgradient methods for multi-agent optimization},
  author={Nedic, Angelia and Ozdaglar, Asuman},
  journal={IEEE Transactions on Automatic Control},
  volume={54},
  number={1},
  pages={48--61},
  year={2009},
  publisher={IEEE}
}

@article{zeng2018nonconvex,
  title={On nonconvex decentralized gradient descent},
  author={Zeng, Jinshan and Yin, Wotao},
  journal={IEEE Transactions on signal processing},
  volume={66},
  number={11},
  pages={2834--2848},
  year={2018},
  publisher={IEEE}
}

@article{lian2017can,
  title={Can decentralized algorithms outperform centralized algorithms? a case study for decentralized parallel stochastic gradient descent},
  author={Lian, Xiangru and Zhang, Ce and Zhang, Huan and Hsieh, Cho-Jui and Zhang, Wei and Liu, Ji},
  journal={Advances in neural information processing systems},
  volume={30},
  year={2017}
}

@article{swenson2022distributed,
  title={Distributed stochastic gradient descent: Nonconvexity, nonsmoothness, and convergence to local minima},
  author={Swenson, Brian and Murray, Ryan and Poor, H Vincent and Kar, Soummya},
  journal={The Journal of Machine Learning Research},
  volume={23},
  number={1},
  pages={14751--14812},
  year={2022},
  publisher={JMLRORG}
}

@article{pu2021distributed,
  title={Distributed stochastic gradient tracking methods},
  author={Pu, Shi and Nedi{\'c}, Angelia},
  journal={Mathematical Programming},
  volume={187},
  pages={409--457},
  year={2021},
  publisher={Springer}
}

@article{zhang2019decentralized,
  title={Decentralized stochastic gradient tracking for non-convex empirical risk minimization},
  author={Zhang, Jiaqi and You, Keyou},
  journal={arXiv preprint arXiv:1909.02712},
  year={2019}
}

@article{koloskova2021improved,
  title={An improved analysis of gradient tracking for decentralized machine learning},
  author={Koloskova, Anastasiia and Lin, Tao and Stich, Sebastian U},
  journal={Advances in Neural Information Processing Systems},
  volume={34},
  pages={11422--11435},
  year={2021}
}

@inproceedings{yu2019linear,
  title={On the linear speedup analysis of communication efficient momentum SGD for distributed non-convex optimization},
  author={Yu, Hao and Jin, Rong and Yang, Sen},
  booktitle={International Conference on Machine Learning},
  pages={7184--7193},
  year={2019},
  organization={PMLR}
}

@article{gao2020periodic,
  title={Periodic stochastic gradient descent with momentum for decentralized training},
  author={Gao, Hongchang and Huang, Heng},
  journal={arXiv preprint arXiv:2008.10435},
  year={2020}
}

@inproceedings{chen2023convergence,
  title={On the convergence of decentralized adaptive gradient methods},
  author={Chen, Xiangyi and Karimi, Belhal and Zhao, Weijie and Li, Ping},
  booktitle={Asian Conference on Machine Learning},
  pages={217--232},
  year={2023},
  organization={PMLR}
}

@article{di2016next,
  title={Next: In-network nonconvex optimization},
  author={Di Lorenzo, Paolo and Scutari, Gesualdo},
  journal={IEEE Transactions on Signal and Information Processing over Networks},
  volume={2},
  number={2},
  pages={120--136},
  year={2016},
  publisher={IEEE}
}

@article{xin2021improved,
  title={An improved convergence analysis for decentralized online stochastic non-convex optimization},
  author={Xin, Ran and Khan, Usman A and Kar, Soummya},
  journal={IEEE Transactions on Signal Processing},
  volume={69},
  pages={1842--1858},
  year={2021},
  publisher={IEEE}
}

@article{chen2021distributed,
  title={On distributed nonconvex optimization: Projected subgradient method for weakly convex problems in networks},
  author={Chen, Shixiang and Garcia, Alfredo and Shahrampour, Shahin},
  journal={IEEE Transactions on Automatic Control},
  volume={67},
  number={2},
  pages={662--675},
  year={2021},
  publisher={IEEE}
}

@article{boyd2004fastest,
  title={Fastest mixing Markov chain on a graph},
  author={Boyd, Stephen and Diaconis, Persi and Xiao, Lin},
  journal={SIAM review},
  volume={46},
  number={4},
  pages={667--689},
  year={2004},
  publisher={SIAM}
}

@article{pauwels2021incremental,
  title={Incremental Without Replacement Sampling in Nonconvex Optimization},
  author={Pauwels, Edouard},
  journal={Journal of Optimization Theory and Applications},
  volume={190},
  number={1},
  pages={274--299},
  year={2021},
  publisher={Springer}
}

@inproceedings{yuan2021decentlam,
  title={DecentLaM: Decentralized momentum SGD for large-batch deep training},
  author={Yuan, Kun and Chen, Yiming and Huang, Xinmeng and Zhang, Yingya and Pan, Pan and Xu, Yinghui and Yin, Wotao},
  booktitle={Proceedings of the IEEE/CVF International Conference on Computer Vision},
  pages={3029--3039},
  year={2021}
}

@article{shi2015proximal,
  title={A proximal gradient algorithm for decentralized composite optimization},
  author={Shi, Wei and Ling, Qing and Wu, Gang and Yin, Wotao},
  journal={IEEE Transactions on Signal Processing},
  volume={63},
  number={22},
  pages={6013--6023},
  year={2015},
  publisher={IEEE}
}

@article{wang2022decentralized,
  title={Decentralized optimization over the Stiefel manifold by an approximate augmented Lagrangian function},
  author={Wang, Lei and Liu, Xin},
  journal={IEEE Transactions on Signal Processing},
  volume={70},
  pages={3029--3041},
  year={2022},
  publisher={IEEE}
}

@inproceedings{di2019distributed,
  title={Distributed stochastic nonconvex optimization and learning based on successive convex approximation},
  author={Di Lorenzo, Paolo and Scardapane, Simone},
  booktitle={2019 53rd Asilomar Conference on Signals, Systems, and Computers},
  pages={1--5},
  year={2019},
  organization={IEEE}
}

@article{niu2021distributed,
  title={A distributed stochastic proximal-gradient algorithm for composite optimization},
  author={Niu, Youcheng and Li, Huaqing and Wang, Zheng and L{\"u}, Qingguo and Xia, Dawen and Ji, Lianghao},
  journal={IEEE Transactions on Control of Network Systems},
  volume={8},
  number={3},
  pages={1383--1393},
  year={2021},
  publisher={IEEE}
}

@article{xin2021stochastic,
  title={A stochastic proximal gradient framework for decentralized non-convex composite optimization: Topology-independent sample complexity and communication efficiency},
  author={Xin, Ran and Das, Subhro and Khan, Usman A and Kar, Soummya},
  journal={arXiv preprint arXiv:2110.01594},
  year={2021}
}

@inproceedings{xiao2023one,
  title={A one-sample decentralized proximal algorithm for non-convex stochastic composite optimization},
  author={Xiao, Tesi and Chen, Xuxing and Balasubramanian, Krishnakumar and Ghadimi, Saeed},
  booktitle={Uncertainty in Artificial Intelligence},
  pages={2324--2334},
  year={2023},
  organization={PMLR}
}

@article{jiang2017collaborative,
  title={Collaborative deep learning in fixed topology networks},
  author={Jiang, Zhanhong and Balu, Aditya and Hegde, Chinmay and Sarkar, Soumik},
  journal={Advances in Neural Information Processing Systems},
  volume={30},
  year={2017}
}

@inproceedings{george2019distributed,
  title={Distributed stochastic gradient method for non-convex problems with applications in supervised learning},
  author={George, Jemin and Yang, Tao and Bai, He and Gurram, Prudhvi},
  booktitle={2019 IEEE 58th Conference on Decision and Control (CDC)},
  pages={5538--5543},
  year={2019},
  organization={IEEE}
}

@article{sahinoglu2024online,
  title={Online Optimization Perspective on First-Order and Zero-Order Decentralized Nonsmooth Nonconvex Stochastic Optimization},
  author={Sahinoglu, Emre and Shahrampour, Shahin},
  journal={arXiv preprint arXiv:2406.01484},
  year={2024}
}

@article{bottou2018optimization,
  title={Optimization methods for large-scale machine learning},
  author={Bottou, L{\'e}on and Curtis, Frank E and Nocedal, Jorge},
  journal={SIAM review},
  volume={60},
  number={2},
  pages={223--311},
  year={2018},
  publisher={SIAM}
}

@book{shapiro2021lectures,
  title={Lectures on stochastic programming: modeling and theory},
  author={Shapiro, Alexander and Dentcheva, Darinka and Ruszczynski, Andrzej},
  year={2021},
  publisher={SIAM}
}

@article{jain2017non,
  title={Non-convex optimization for machine learning},
  author={Jain, Prateek and Kar, Purushottam and others},
  journal={Foundations and Trends{\textregistered} in Machine Learning},
  volume={10},
  number={3-4},
  pages={142--363},
  year={2017},
  publisher={Now Publishers, Inc.}
}

@article{mateos2012distributed,
  title={Distributed recursive least-squares: Stability and performance analysis},
  author={Mateos, Gonzalo and Giannakis, Georgios B},
  journal={IEEE Transactions on Signal Processing},
  volume={60},
  number={7},
  pages={3740--3754},
  year={2012},
  publisher={IEEE}
}

@article{cohen2016distributed,
  title={Distributed learning algorithms for spectrum sharing in spatial random access wireless networks},
  author={Cohen, Kobi and Nedi{\'c}, Angelia and Srikant, Rayadurgam},
  journal={IEEE Transactions on Automatic Control},
  volume={62},
  number={6},
  pages={2854--2869},
  year={2016},
  publisher={IEEE}
}

@article{bolognani2014distributed,
  title={Distributed reactive power feedback control for voltage regulation and loss minimization},
  author={Bolognani, Saverio and Carli, Ruggero and Cavraro, Guido and Zampieri, Sandro},
  journal={IEEE Transactions on Automatic Control},
  volume={60},
  number={4},
  pages={966--981},
  year={2014},
  publisher={IEEE}
}

@article{josz2024global,
  title={Global stability of first-order methods for coercive tame functions},
  author={Josz, C{\'e}dric and Lai, Lexiao},
  journal={Mathematical Programming},
  volume={207},
  number={1},
  pages={551--576},
  year={2024},
  publisher={Springer}
}

@article{josz2023lyapunov,
  title={Lyapunov stability of the subgradient method with constant step size},
  author={Josz, C{\'e}dric and Lai, Lexiao},
  journal={Mathematical Programming},
  volume={202},
  number={1},
  pages={387--396},
  year={2023},
  publisher={Springer}
}

@article{bianchi2019constant,
  title={Constant step stochastic approximations involving differential inclusions: stability, long-run convergence and applications},
  author={Bianchi, Pascal and Hachem, Walid and Salim, Adil},
  journal={Stochastics},
  volume={91},
  number={2},
  pages={288--320},
  year={2019},
  publisher={Taylor \& Francis}
}

@article{bolte2024inexact,
  title={Inexact subgradient methods for semialgebraic functions},
  author={Bolte, J{\'e}r{\^o}me and Le, Tam and Moulines, {\'E}ric and Pauwels, Edouard},
  journal={arXiv preprint arXiv:2404.19517},
  year={2024}
}

@article{bian2009subgradient,
  title={Subgradient-based neural networks for nonsmooth nonconvex optimization problems},
  author={Bian, Wei and Xue, Xiaoping},
  journal={IEEE Transactions on Neural Networks},
  volume={20},
  number={6},
  pages={1024--1038},
  year={2009},
  publisher={IEEE}
}

@book{borkar2008stochastic,
  title={Stochastic approximation: a dynamical systems viewpoint},
  author={Borkar, Vivek S and Borkar, Vivek S},
  volume={9},
  year={2008},
  publisher={Springer}
}

@article{gao2023distributed,
  title={Distributed stochastic gradient tracking methods with momentum acceleration for non-convex optimization},
  author={Gao, Juan and Liu, Xin-Wei and Dai, Yu-Hong and Huang, Yakui and Gu, Junhua},
  journal={Computational Optimization and Applications},
  volume={84},
  number={2},
  pages={531--572},
  year={2023},
  publisher={Springer}
}

@article{bolte2021nonsmooth,
  title={Nonsmooth implicit differentiation for machine-learning and optimization},
  author={Bolte, J{\'e}r{\^o}me and Le, Tam and Pauwels, Edouard and Silveti-Falls, Tony},
  journal={Advances in neural information processing systems},
  volume={34},
  pages={13537--13549},
  year={2021}
}

@article{de2020random,
  title={Random reshuffling is not always better},
  author={De Sa, Christopher M},
  journal={Advances in Neural Information Processing Systems},
  volume={33},
  pages={5957--5967},
  year={2020}
}

@book{kelley2017general,
  title={General topology},
  author={Kelley, John L},
  year={2017},
  publisher={Courier Dover Publications}
}

@article{huang2023distributed,
  title={Distributed random reshuffling over networks},
  author={Huang, Kun and Li, Xiao and Milzarek, Andre and Pu, Shi and Qiu, Junwen},
  journal={IEEE Transactions on Signal Processing},
  volume={71},
  pages={1143--1158},
  year={2023},
  publisher={IEEE}
}

@article{huang2023distributed2,
  title={Distributed random reshuffling methods with improved convergence},
  author={Huang, Kun and Zhou, Linli and Pu, Shi},
  journal={arXiv preprint arXiv:2306.12037},
  year={2023}
}

@article{adam,
author = {Kingma, Diederik and Ba, Jimmy},
year = {2014},
month = {12},
pages = {},
title = {Adam: A Method for Stochastic Optimization},
journal = {International Conference on Learning Representations}
}

@article{zhuang2020adabelief,
  title={Adabelief optimizer: Adapting stepsizes by the belief in observed gradients},
  author={Zhuang, Juntang and Tang, Tommy and Ding, Yifan and Tatikonda, Sekhar C and Dvornek, Nicha and Papademetris, Xenophon and Duncan, James},
  journal={Advances in neural information processing systems},
  volume={33},
  pages={18795--18806},
  year={2020}
}

@article{reddi2019convergence,
  title={On the convergence of adam and beyond},
  author={Reddi, Sashank J and Kale, Satyen and Kumar, Sanjiv},
  journal={arXiv preprint arXiv:1904.09237},
  year={2019}
}

@article{ding2023adam,
  title={Adam-family methods with decoupled weight decay in deep learning},
  author={Ding, Kuangyu and Xiao, Nachuan and Toh, Kim-Chuan},
  journal={arXiv preprint arXiv:2310.08858},
  year={2023}
}

@article{wang2025decentralized,
  title={A decentralized proximal gradient tracking algorithm for composite optimization on riemannian manifolds},
  author={Wang, Lei and Bao, Le and Liu, Xin},
  journal={Journal of Machine Learning Research},
  volume={26},
  number={106},
  pages={1--37},
  year={2025}
}

@article{condat2024locodl,
  title={Locodl: Communication-efficient distributed learning with local training and compression},
  author={Condat, Laurent and Maranjyan, Artavazd and Richt{\'a}rik, Peter},
  journal={arXiv preprint arXiv:2403.04348},
  year={2024}
}

@article{xu2023parallel,
  title={Parallel and distributed asynchronous adaptive stochastic gradient methods},
  author={Xu, Yangyang and Xu, Yibo and Yan, Yonggui and Sutcher-Shepard, Colin and Grinberg, Leopold and Chen, Jie},
  journal={Mathematical Programming Computation},
  volume={15},
  number={3},
  pages={471--508},
  year={2023},
  publisher={Springer}
}

@inproceedings{sun2024role,
  title={On the role of server momentum in federated learning},
  author={Sun, Jianhui and Wu, Xidong and Huang, Heng and Zhang, Aidong},
  booktitle={Proceedings of the AAAI Conference on Artificial Intelligence},
  volume={38},
  number={13},
  pages={15164--15172},
  year={2024}
}

@article{zhou2025decentralized,
  title={Decentralized Nonconvex Composite Federated Learning with Gradient Tracking and Momentum},
  author={Zhou, Yuan and Shi, Xinli and Li, Xuelong and Zhong, Jiachen and Wen, Guanghui and Cao, Jinde},
  journal={arXiv preprint arXiv:2504.12742},
  year={2025}
}

@article{Bylinkin2024AcceleratedMW,
  title={Accelerated Methods with Compressed Communications for Distributed Optimization Problems under Data Similarity},
  author={Dmitry Bylinkin and Aleksandr Beznosikov},
  journal={ArXiv},
  year={2024},
  volume={abs/2412.16414},
  url={https://api.semanticscholar.org/CorpusID:274982476}
}

@article{Liu2025CompressedDM,
  title={Compressed Decentralized Momentum Stochastic Gradient Methods for Nonconvex Optimization},
  author={Wei Liu and Anweshit Panda and Ujwal Pandey and Christopher Brissette and Yikang Shen and George M. Slota and Naigang Wang and Jiewei Chen and Yangyang Xu},
  journal={Trans. Mach. Learn. Res.},
  year={2025},
  volume={2025},
  url={https://api.semanticscholar.org/CorpusID:280545843}
}

@inproceedings{Yan2023CompressedDP,
  title={Compressed Decentralized Proximal Stochastic Gradient Method for Nonconvex Composite Problems with Heterogeneous Data},
  author={Yonggui Yan and Jiewei Chen and Pin-Yu Chen and Xiaodong Cui and Songtao Lu and Yangyang Xu},
  booktitle={International Conference on Machine Learning},
  year={2023},
  url={https://api.semanticscholar.org/CorpusID:257232829}
}

@article{vogels2023beyond,
  title={Beyond spectral gap (extended): The role of the topology in decentralized learning},
  author={Vogels, Thijs and Hendrikx, Hadrien and Jaggi, Martin},
  journal={arXiv preprint arXiv:2301.02151},
  year={2023}
}

@article{song2022communication,
  title={Communication-efficient topologies for decentralized learning with $ o (1) $ consensus rate},
  author={Song, Zhuoqing and Li, Weijian and Jin, Kexin and Shi, Lei and Yan, Ming and Yin, Wotao and Yuan, Kun},
  journal={Advances in Neural Information Processing Systems},
  volume={35},
  pages={1073--1085},
  year={2022}
}

@inproceedings{NEURIPS2022_a9077da4,
 author = {Bolte, Jerome and Pauwels, Edouard and Vaiter, Samuel},
 booktitle = {Advances in Neural Information Processing Systems},
 editor = {S. Koyejo and S. Mohamed and A. Agarwal and D. Belgrave and K. Cho and A. Oh},
 pages = {26404--26417},
 publisher = {Curran Associates, Inc.},
 title = {Automatic differentiation of nonsmooth iterative algorithms},
 url = {https://proceedings.neurips.cc/paper_files/paper/2022/file/a9077da44185792cb63599cc9e0357bc-Paper-Conference.pdf},
 volume = {35},
 year = {2022}
}

@article{bolte2025inexact,
  title={Inexact subgradient methods for semialgebraic functions: J. Bolte et al.},
  author={Bolte, J{\'e}r{\^o}me and Le, Tam and Moulines, Eric and Pauwels, Edouard},
  journal={Mathematical Programming},
  pages={1--27},
  year={2025},
  publisher={Springer}
}

\vfill

\end{document}